\DeclareMathOperator*{\esssup}{ess\,sup}
\def\titlerunning#1{\gdef\titrun{#1}}
\def\author#1{\gdef\autrun{\def\and{\unskip, }#1}\gdef\@author{#1}}
\def\address#1{{\def\and{\\\hspace*{18pt}}\renewcommand{\thefootnote}{}%
\footnote {#1}}%
\markboth{\autrun}{\titrun}}
\def\email#1{e-mail: #1}
\def\subjclass#1{{\renewcommand{\thefootnote}{}%
\footnote{\emph{Mathematics Subject Classification (2010):} #1}}}
\newtheorem{theorem}{Theorem}[section]
\newtheorem{lemma}[theorem]{Lemma}
\newtheorem{definition}[theorem]{Definition}
\newtheorem{proposition}[theorem]{Proposition}
\newtheorem{remark}[theorem]{Remark}
\newtheorem{example}[theorem]{Example}
\newcommand{\R}{\mathbb{R}}
\newcommand{\Proof}{\begin{proof}}
\newcommand{\End}{\end{proof}}
\numberwithin{equation}{section}
\newcommand{\PreserveBackslash}[1]{\let\temp=\\#1\let\\=\temp}
\newcolumntype{C}[1]{>{\PreserveBackslash\centering}p{#1}}
\newcolumntype{R}[1]{>{\PreserveBackslash\raggedleft}p{#1}}
\newcolumntype{L}[1]{>{\PreserveBackslash\raggedright}p{#1}}
\newcolumntype{I}{!{\vrule width 1pt}}
\newlength\savedwidth
\begin{document}

%%%%% To ease editing, add:

\baselineskip=15pt

%%%%%%%%%%%%%%%%

%% In the running head, give an abbreviation of the title.
\titlerunning{Weak KAM solutions of Hamilton-Jacobi equations}

\title{Nonlinear semigroup approach to Hamilton-Jacobi equations---A toy model}

\author{Liang Jin, Jun Yan \,and\, Kai Zhao}

\date{\today}

\maketitle

\address{Jin Liang: Department of Mathematics, Nanjing University of Science and Technology, Nanjing 210094, China;
 \email{jl@njust.edu.cn }
		\and Jun Yan: School of Mathematical Sciences, Fudan University, Shanghai 200433, China; 
		\email{yanjun@fudan.edu.cn}
		\and
		Kai Zhao:  School of Mathematical Sciences, Fudan University, Shanghai 200433, China;
		\email{zhao$\_$kai@fudan.edu.cn}}
	\subjclass{37J50; 35F21; 35D40}

%%%%%%%%
\begin{abstract}
  In this paper, we discuss the existence and multiplicity problem of viscosity solution to the Hamilton-Jacobi equation
  \[
  h(x,d_x u)+\lambda(x)u=c,\quad x\in M,
  \]
  where $M$ is a closed manifold and $\lambda:M\rightarrow\R$ changes signs on $M$, via nonlinear semigroup method. It turns out that a bifurcation phenomenon occurs when parameter $c$ strides over the critical value. As an application of the main result, we analyse the structure of the set of viscosity solutions of an one-dimensional example in detail.
\end{abstract}

%\newpage
%
%\tableofcontents

%\newpage
%%%%%%%%%%%%%%%%%%%%%%%%%%%%%%%%%%%%%%%%%%%%%%%%%%%%%%%%Sect. 1

\section{Introduction}
\setcounter{equation}{0}
\setcounter{footnote}{0}

Let $M$ be a smooth, connected, compact Riemannian manifold without boundary. We use $T^*M$ to denote its cotangent bundle and $H$ a continuous function, called Hamiltonian, on $T^*M\times\mathbb{R}$. The problem of existence and uniqueness of viscosity solution of the Hamilton-Jacobi equation
\begin{equation}\label{HJs}\tag{HJs}
H(x,d_x u,u)=c,\quad x\in M,
\end{equation}
has attracted much attention in past forty years. For fixed constant $c\in\R$, the earliest results are obtained by M.Crandall, P.L.Lions in \cite{CL}-\cite{Crandall_Evans_Lions1984} when $H$ is strictly increasing in $u$, for instance $H(x,p,u)=u+h(x,p)$. The corresponding analytic tools including comparison principle have a great influence on the later development of the viscosity solution theory. For $H=H(x,p)$ independent of $u$, the situation is a bit complicated. A breakthrough was made in \cite{LPV_Hom}, where Lions and his coauthors changed the strategy and successfully proved the solvability of the ergodic problem, i.e., the existence of a pair $(u,c)\in C(M)\times\R$ solving the equation \eqref{HJs}. On the other hand, examples lead to the failure of the uniqueness of solution in this case.

\vspace{1em}
The picture for $u$-independent Hamiltonian becomes more clear after A.Fathi's work in the late 90s. In fact, Fathi built a connection, i.e., weak KAM theory \cite{Fathi_book}, between the theory of viscosity solution and Aubry-Mather theory in Hamiltonian dynamics. It turns out, under suitable assumptions (H1)-(H2) listed below, the constant $c$ found in \cite{LPV_Hom} is uniquely determined by $H$. The ingredients of Fathi's theory consist of regarding the solution of \eqref{HJs} as the large time limit of a nonlinear solution semigroup $\{T^-_t\}_{t\geqslant0}$ generated by the evolutionary equation
\begin{equation}\label{HJe}\tag{HJe}
\begin{cases}
\partial_t u+H(x,\partial_x u,u)=c,\quad (x,t)\in M\times[0,+\infty),\\
u(0,x)=\varphi(x)\in C(M).
\end{cases}
\end{equation}
It is curious to notice that the application of nonlinear semigroup method on the existence problem of evolutionary Hamilton-Jacobi equations already occurred in \cite[VI.3, page 39-41]{CL}. Nevertheless, due to the lack of explicit formula for the semigroup as well as further information on the dynamics of the associated system, the convergence of semigroup was not treated until the birth of weak KAM theory. According to the work of H.Ishii \cite{Ishii_chapter}, most of the weak KAM theory can be fit into the theory of viscosity solution by using  delicate  analytic tools.

\vspace{1em}
More recently, the nonlinear semigroup method was extended to genuinely $u$-dependent Hamiltonian in the sequence of works \cite{WWY1,WWY2,WWY3} by using a new variational principle. It is of particular interest that, based on the works mentioned before, the structure of the set of solutions of \eqref{HJs} can be sketched if $H$ is uniformly Lipschitz in $u$. This includes the untouched case that $H$ is strictly decreasing in $u$. Shortly after \cite{WWY2} occurred, \cite{JMT} generalized the results to ergodic problems from PDE aspects. In this paper, we show, through a simple model, that the results obtained in \cite{WWY1}-\cite{WWY3} allow us to treat the solvability of \eqref{HJs} for any fixed $c\in\R$ when $u$-monotonicity of Hamiltonian is not assumed, and secondly, to present a \textbf{bifurcation phenomenon for the family of equations \eqref{HJs} parametrized by $c$}.

\vspace{1em}
Once and for all, we use $|\cdot|_x$ to denote the dual norm induced by the Riemannian metric on $T^\ast_xM$ and normalize this metric so that diam$(M)=1$. We consider the Hamiltonian $H:T^*M\times\mathbb{R}\to \mathbb{R}$ written in the form
\begin{equation}\label{model}
H(x,p,u)=h(x,p)+\lambda(x)u
\end{equation}
where $h(x,p),\lambda(x)$ are $C^3$ functions satisfying:
\begin{enumerate}
	\item[(H1)] (Convexity) the Hessian $\frac{\partial^2 h}{\partial p^2}$ is positive definite for all $(x,p)\in T^*M$;
	\item[(H2)] (Superlinearity) for every $K\geqslant 0$, there is   $C^{\ast}(K)>0$ such that $h(x,p)\geqslant K|p|_x-C^{\ast}(K)$;
	\item[(H3)] (Fluctuation) there exist $x_1,x_2\in M$ such that $\lambda(x_1)=1$ and $\lambda(x_2)=-1$.
\end{enumerate}
The arguments for establishing our main theorem depend on the variational principal developed in \cite{WWY1}-\cite{WWY3}. This makes our standing assumptions (H1)-(H3) relatively stronger than the standard assumptions in PDE (convexity and coercivity in $p$). With these settings, our results can be summarise into
%It is easily seen that (H1)-(H2) are usual Tonelli conditions for $u$-independent Hamiltonians \cite{Fathi_book} and naturally extend to assumptions on general Hamiltonian $H(x,p,u)$ since they are just restrictions on $p$.
\begin{theorem}\label{main}
For the equation \eqref{HJs} with Hamiltonian \eqref{model} satisfying (H1)-(H3),
\begin{enumerate}
  \item there is $c(H)\in\R$, uniquely determined by $H$, such that \eqref{HJs} admits a solution if and only if $c\geqslant c(H)$. Furthermore, if $c>c(H)$, \eqref{HJs} admits at least two solutions.
  \item for any $c \geqslant c(H)$, there is a constant $B(H,c)>0$ such that any solution $v:M\rightarrow\R$ to \eqref{HJs} satisfies
      \[
      \|v\|_{W^{1,\infty}(M) }\leqslant B.
      \]
      where $ \|v\|_{W^{1,\infty}(M)} := \esssup_{M}(|v|+|D v| ) $.
\end{enumerate}
\end{theorem}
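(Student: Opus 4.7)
The plan is to work directly with the nonlinear solution semigroup $\{T^-_{t,c}\}_{t\geqslant 0}$ generated by the evolutionary equation
\[
\partial_t u + h(x,\partial_x u) + \lambda(x)u = c,
\]
as constructed in \cite{WWY1,WWY2,WWY3} via the implicit variational principle. Viscosity solutions of \eqref{HJs} coincide with the common fixed points of $\{T^-_{t,c}\}_{t>0}$, and the behaviour of this semigroup in $c$ and in the initial datum will organise both halves of the theorem.

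I would first establish the a priori $W^{1,\infty}$ bound asserted in (2), since it is what makes all subsequent compactness arguments work. The Lipschitz and $L^\infty$ controls are coupled and I would obtain them by a simultaneous bootstrap. The equation combined with (H2) gives a Lipschitz estimate controlled by the $L^\infty$ norm: using $h(x,dv) = c - \lambda(x)v$ a.e.\ and $h(x,p) \geqslant K|p|_x - C^{\ast}(K)$ with $K$ optimised yields $|dv|_x \leqslant \phi_c(\|v\|_\infty)$ with $\phi_c$ sublinear at infinity. The $L^\infty$ bound is in turn extracted from (H3): on a neighbourhood of $x_1$ where $\lambda \geqslant 1/2$ the a.e.\ identity and $h \geqslant -C^{\ast}(0)$ give $v \leqslant 2(c + C^{\ast}(0))$ there, and symmetrically $v \geqslant -2(c + C^{\ast}(0))$ near $x_2$. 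Since $\mathrm{diam}(M)=1$, the Lipschitz estimate transports these local bounds to $\|v\|_\infty \leqslant 2(c+C^{\ast}(0)) + \phi_c(\|v\|_\infty)$, and sublinearity of $\phi_c$ closes the bootstrap, producing $B(H,c)$.

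For part (1), set $c(H) := \inf\{c \in \R : \eqref{HJs} \text{ admits a viscosity solution}\}$. Finiteness of $c(H)$ from above is shown by running $T^-_{t,c}$ on an arbitrary initial datum for $c$ large: the same a priori bounds keep the orbit $W^{1,\infty}$-bounded uniformly in $t$, so a subsequential limit exists by Arzel\`a--Ascoli and is identified as a fixed point by the asymptotic theory of \cite{WWY1,WWY2,WWY3}. Finiteness from below is essentially forced by the identities used in the previous paragraph, which require $c$ to be compatible with $h(\cdot,0)$. Existence at $c = c(H)$ itself follows by taking $c_n \downarrow c(H)$ with corresponding solutions $v_n$, invoking the uniform $W^{1,\infty}$ bound from part (2), and passing to a uniform limit that is a solution by the stability of viscosity solutions.

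The hard part is the multiplicity claim for $c > c(H)$. My plan is to exploit the dual semigroup $\{T^+_{t,c}\}$ also constructed in \cite{WWY1,WWY2,WWY3}, whose fixed points are again solutions of \eqref{HJs}: if $\lambda$ had a constant sign, one of the two semigroups would be a contraction and would force uniqueness, but under (H3) both signs occur and the two semigroups genuinely select different fixed points. Concretely, I would produce a minimal solution $v^-_c$ as the long-time limit of $T^-_{t,c}$ from an initial subsolution and a maximal solution $v^+_c$ as the long-time limit of $T^+_{t,c}$ from an initial supersolution, show that $v^-_c \equiv v^+_c$ at $c = c(H)$ (where a solution just barely exists), and argue by monotonicity in $c$ that the gap $v^+_c - v^-_c$ opens strictly once $c$ moves past $c(H)$. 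The delicate point is exactly this strict separation, since the sign change of $\lambda$ blocks any naive comparison argument; I expect to need the variational formula from \cite{WWY1,WWY2,WWY3} together with the fact that a minimising orbit spends a definite fraction of time in each of the regions $\{\lambda > 0\}$ and $\{\lambda < 0\}$ in order to close it.
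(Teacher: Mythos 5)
Your a priori bound (part (2)) is sound and is essentially the paper's Lemma \ref{uni-Lip}: the choice $K=1+\lambda_0$ in (H2) gives a Lipschitz bound with slope $\lambda_0/(1+\lambda_0)<1$ in $\|v\|_\infty$, the points $x_1,x_2$ from (H3) pin $v$ above and below, and $\mathrm{diam}(M)=1$ closes the bootstrap. The lower bound $c\geqslant -e_0$ is also fine. But the existence half of part (1) has a real gap: you assert that ``the same a priori bounds keep the orbit $T^{-}_{t,c}\varphi$ $W^{1,\infty}$-bounded uniformly in $t$.'' The bounds you proved apply to \emph{fixed points} of the semigroup, not to orbits, and boundedness of the orbit is precisely the technical heart of the matter. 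Since $\lambda$ changes sign, on the region $\{\lambda<0\}$ the evolution has an exponentially expanding direction and nothing formal prevents $T^{-}_{t,c}v\to+\infty$. The paper's Lemma \ref{back-bd} proves the needed upper bound by a Gronwall-type argument on $G(t)=\int_0^t h^c_{x_1,c+e_0}(x_1,\tau)\,d\tau$, exploiting that the constant test curve sits at $x_1$ where $\lambda(x_1)=1$ acts as damping; some substitute for this is indispensable. Relatedly, with your definition $c(H):=\inf\{c:\text{a solution exists}\}$ you must also show the solvable set is upward closed (otherwise ``if and only if $c\geqslant c(H)$'' fails), which again reduces to running the semigroup from a subsolution and needs the same missing orbit bound. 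The paper avoids this by defining $c(H)$ through the inf-sup formula \eqref{dcv}, so that a Lipschitz subsolution at level $c(H)$ (Theorem \ref{exist-sub}) is automatically a subsolution at every $c\geqslant c(H)$.

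The multiplicity argument rests on a false premise: fixed points of the forward semigroup $\{T^{c,+}_t\}$ are \emph{not} viscosity solutions of \eqref{HJs}. As in Definition \ref{weak kam}, $u\in\mathcal{S}^c_+$ means $-u$ solves the reversed equation $\breve H(x,d_xu,u)=c$ with $\breve H(x,p,u)=H(x,-p,-u)$; forward weak KAM solutions are in general only supersolutions-from-the-wrong-side and your $v^+_c$ would not furnish a second element of $\mathcal{S}^c_-$. Moreover the picture ``$v^-_{c}\equiv v^+_{c}$ at $c=c(H)$ and the gap opens for $c>c(H)$'' is not what happens: in the paper's example (Section \ref{section3}) the critical level already carries infinitely many solutions. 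The paper's actual mechanism is different: for $c>c(H)$ a \emph{strict} smooth subsolution $v$ exists by \eqref{dcv}, giving $u^c_+<v<u^c_-$ with $u^c_\pm=\lim_t T^{c,\pm}_tv$; one then forms the second candidate $\bar u^c_-=\lim_t T^{c,-}_tu^c_+\leqslant u^c_-$ and rules out $\bar u^c_-\equiv u^c_-$ by the Ma\~n\'e-type coincidence result (Proposition \ref{mane}): a pair related by $u^c_-=\lim_tT^{c,-}_tu^c_+$ must touch somewhere, contradicting $u^c_+<u^c_-$. That coincidence lemma, or an equivalent device, is the missing idea; the ``fraction of time in $\{\lambda>0\}$ and $\{\lambda<0\}$'' heuristic you propose does not by itself produce a second fixed point of the backward semigroup.
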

\vspace{1em}
Here and anywhere, solutions to \eqref{HJs} and \eqref{HJe} should always be understood in the viscosity sense. The remaining of this paper is organized as follows. In Section 2, we briefly recall some necessary tools from \cite{WWY1}-\cite{WWY3} and give a relatively self-contained proof of the main result. Section 3 is devoted to detailed analysis of the structure of the solutions of an example, thus illustrating the meaning of our result.

\section{Proof of the main result}
We divide the proof of main result into two steps. As the first step, we define the constant $c(H)$ and prove its finiteness. When $c<c(H)$, the non-existence of solution of \eqref{HJs} is a direct consequence of that. Secondly, we use tools from the former works \cite{WWY1}-\cite{WWY3} to show the existence and multiplicity of solutions. We use $\|u\|_\infty$ to denote the $C^0$-norm of $u$ as a continuous function on $M$.

\subsection{Critical value and subsolutions to \eqref{HJs}}
In a similar way with \cite{CIP},  we define the critical value of $H$ by
\begin{equation}\label{dcv}
c(H):=\inf_{u\in C^\infty(M)}\sup_{x\in M}H(x,d_x u(x),u(x)).
\end{equation}
Here, we want to remark that for a general Hamiltonian satisfying (H1)-(H2), the number $c(H)$ is not always finite, as the simple example $H(x,p,u)=u+h(x,p)$ shows (in this case, $c(H)=-\infty$). Nevertheless, for the Hamiltonian \eqref{model},

\begin{lemma}\label{finite}
$-\infty <c(H)<+\infty$.
\end{lemma}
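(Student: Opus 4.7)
The plan is to prove finiteness in the two directions separately. The upper bound will be immediate by testing the infimum against a trivial competitor, while the lower bound will exploit the sign-change hypothesis (H3) and connectedness of $M$ to locate a point where the $u$-dependence of $H$ disappears, so that superlinearity of $h$ alone controls the supremum uniformly.

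For the upper bound, I would plug $u \equiv 0 \in C^\infty(M)$ into the definition \eqref{dcv}. Since $d_x u \equiv 0$ and $u(x) \equiv 0$, the expression reduces to $\sup_{x\in M} h(x,0)$, which is finite because $h$ is continuous and $M$ is compact. This gives $c(H) \le \sup_{x\in M} h(x,0) < +\infty$.

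For the lower bound, the key observation is that $M$ is connected and $\lambda$ is continuous with $\lambda(x_1)=1$ and $\lambda(x_2)=-1$ by (H3), so the intermediate value theorem furnishes a point $x_0 \in M$ with $\lambda(x_0)=0$. For any $u \in C^\infty(M)$, evaluating at $x_0$ gives
\[
\sup_{x\in M} H(x,d_x u,u(x)) \;\ge\; H(x_0,d_{x_0}u,u(x_0)) \;=\; h(x_0,d_{x_0}u) + \lambda(x_0)u(x_0) \;=\; h(x_0,d_{x_0}u).
\]
Applying (H2) with $K=0$ then yields $h(x_0,d_{x_0}u) \ge -C^{\ast}(0)$, a bound that is independent of $u$. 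Taking the infimum over $u \in C^\infty(M)$ produces $c(H) \ge -C^{\ast}(0) > -\infty$.

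I expect the lower bound to be the substantive step. The example $H(x,p,u)=h(x,p)+u$ discussed just before the lemma shows that in the absence of a sign change one can push the supremum to $-\infty$ by translating $u$ by a large negative constant; the role of (H3) combined with connectedness is precisely to produce a point where the linear $u$-term is neutralized, so that the coercivity estimate $h\ge -C^\ast(0)$ survives the infimum. No heavy machinery beyond (H2) with $K=0$ and the intermediate value theorem is required.
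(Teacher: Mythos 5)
Your proof is correct and follows essentially the same route as the paper: the upper bound by testing $u\equiv 0$, and the lower bound by using (H3) together with connectedness and the intermediate value theorem to find $x_0$ with $\lambda(x_0)=0$, then applying (H2) with $K=0$. Your explicit mention of the intermediate value theorem just spells out a step the paper leaves implicit.
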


\begin{proof}
We choose $u\equiv0$ on $M$, by \eqref{dcv}, to obtain
\[
c(H)\leqslant\sup_{x\in M}H(x,0,0)=\sup_{x\in M}h(x,0)<+\infty.
\]
Note that by taking   $K=0$ in the assumption (H2), there is $e_0:=C^*(0)>0$ such that
\begin{equation}\label{e_0}
\min_{(x,p)\in T^*M}h(x,p)\geqslant -e_0.
\end{equation}
Now the assumption (H3) implies that there exists $x_0\in M$ such that $\lambda (x_0)=0$. Thus for any $u\in C^\infty(M)$,
\begin{align*}
c(H)=&\inf_{u\in C^\infty(M)}\sup_{x\in M}\,\,[h(x,d_x u(x))+\lambda(x)u(x)]\\
\geqslant &\,\inf_{u\in C^\infty(M)}\,\,[h(x_0,d_x u(x_0))+\lambda(x_0)u(x_0)]\\
=&\,\inf_{u\in C^\infty(M)}h(x_0,d_x u(x_0))\geqslant -e_0.
\end{align*}
\end{proof}

An immediate corollary of Lemma \ref{finite} is
\begin{theorem}
For $c<c(H)$, there is no continuous subsolution to the equation \eqref{HJs}.
\end{theorem}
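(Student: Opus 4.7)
The plan is to argue by contradiction: suppose $u\in C(M)$ is a viscosity subsolution of \eqref{HJs} for some $c<c(H)$, and derive the contradiction $c(H)\leqslant c$ by exhibiting a smooth function $v$ with $\sup_{x\in M}H(x,d_xv,v)$ arbitrarily close to $c$, thus contradicting the definition \eqref{dcv}.

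The first step is to upgrade the continuity of $u$ to Lipschitz regularity. Since $M$ is compact and $u$ is continuous, the quantity $c-\lambda(x)u(x)$ is uniformly bounded on $M$, say by some constant $M_0$. The viscosity subsolution inequality then reads $h(x,d_xu)\leqslant M_0$ a.e.\ (after noting that convex coercive Hamiltonians force any continuous subsolution to be locally Lipschitz, so that $d_xu$ exists a.e.\ by Rademacher). Choosing $K=1$ in (H2) yields $|d_xu(x)|_x\leqslant M_0+C^\ast(1)$ a.e., so $u$ is globally Lipschitz with a constant $R$ depending only on $H$ and $\|u\|_\infty$.

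The second step is a mollification argument that converts $u$ into an approximate classical subsolution. Working in a finite collection of coordinate charts with a subordinate partition of unity (or, equivalently, via an isometric embedding $M\hookrightarrow\R^N$ with a tubular-neighborhood retraction), convolve $u$ against a standard mollifier $\rho_\eps$ to produce $u_\eps\in C^\infty(M)$ with $u_\eps\to u$ uniformly and $d_xu_\eps\to d_xu$ a.e., while $\|u_\eps-u\|_\infty=O(\eps)$ thanks to the Lipschitz bound from Step~1. The key computation uses Jensen's inequality: for each fixed $x$, by (H1) the map $p\mapsto h(x,p)$ is convex, so
\begin{align*}
h(x,d_xu_\eps(x)) &\leqslant \int h(x,d_xu(y))\,\rho_\eps(x-y)\,dy \\
&\leqslant \int\bigl[h(y,d_xu(y))+\omega_R(|x-y|)\bigr]\rho_\eps(x-y)\,dy \\
&\leqslant \int\bigl[c-\lambda(y)u(y)\bigr]\rho_\eps(x-y)\,dy+\omega_R(\eps),
\end{align*}
where $\omega_R$ is a modulus of continuity of $h(\cdot,p)$ uniform for $|p|\leqslant R$, obtained from the $C^3$ regularity of $h$. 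Using uniform continuity of $\lambda u$ to swap $\lambda(y)u(y)$ for $\lambda(x)u(x)+o(1)$ and then replacing $\lambda(x)u(x)$ by $\lambda(x)u_\eps(x)$ (the error is $O(\eps)$ since $\|u-u_\eps\|_\infty=O(\eps)$ and $\lambda$ is bounded), we conclude
\[
H(x,d_xu_\eps(x),u_\eps(x))\leqslant c+\delta(\eps)\qquad\text{for every }x\in M,
\]
with $\delta(\eps)\to 0$ as $\eps\to 0^+$. Taking infimum over $u_\eps$ in \eqref{dcv} yields $c(H)\leqslant c+\delta(\eps)$, and letting $\eps\to 0^+$ gives $c(H)\leqslant c$, contradicting the assumption $c<c(H)$.

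The only delicate point is the Jensen step combined with the $x$-freezing trick: one must exploit the separated structure $H=h(x,p)+\lambda(x)u$ and the Lipschitz bound on $u$ to absorb the discrepancy between $h(x,\cdot)$ and $h(y,\cdot)$ for $|x-y|\lesssim\eps$. Everything else---global Lipschitz regularity of continuous subsolutions and the smoothness of the mollification on a compact manifold---is entirely standard.
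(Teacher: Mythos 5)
Your proof is correct and follows essentially the same route as the paper: upgrade the continuous subsolution to a Lipschitz one via coercivity, produce $u_\eps\in C^\infty(M)$ with $\sup_{x\in M}H(x,d_xu_\eps,u_\eps)\leqslant c+\delta(\eps)$, and contradict the definition \eqref{dcv}. The only difference is that where you carry out the mollification/Jensen/$x$-freezing argument by hand, the paper freezes the zeroth-order term by setting $G(x,p):=h(x,p)+\lambda(x)u(x)-c$ and invokes the standard smooth-approximation lemma for convex $u$-independent Hamiltonians (quoted as Lemma 2.3 from \cite{DFIZ}), which is precisely the statement your Step 2 proves.
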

For its proof, we need a standard approximation lemma. We omit the proof of the lemma and refer to \cite[Theorem 8.1]{FM_noncompact} for details.

\begin{lemma}\cite[Lemma 2.2]{DFIZ}
Assume $G\in C(T^{\ast}M)$ such that $G(x,\cdot)$ is convex in $T^{\ast}_x M$ for every $x\in M$, and let $u$ be a Lipschitz subsolution of $G(x,d_x u)=0$. Then, for all $\varepsilon>0$, there exists $u_\varepsilon\in C^{\infty}(M)$ such that $\|u-u_\varepsilon\|_\infty<\varepsilon$ and
$G(x, d_x u_\varepsilon)\leqslant\varepsilon$ for all $x\in M$.
\end{lemma}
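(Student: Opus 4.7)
The plan is to argue by contradiction: suppose there exists a continuous viscosity subsolution $u$ of \eqref{HJs} for some $c<c(H)$, and derive that $c(H)\leqslant c$, contradicting the assumption. The bridge between the a priori smooth infimum in the definition \eqref{dcv} of $c(H)$ and the continuous subsolution $u$ will be provided by the approximation lemma quoted just above.

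The first step is to upgrade $u$ from continuous to Lipschitz. Since $M$ is compact, $u$ is bounded, so at any test function $\varphi\in C^1$ touching $u$ from above at $x$ the subsolution inequality gives
\[
h(x,d_x\varphi)\leqslant c-\lambda(x)u(x)\leqslant c+\|\lambda\|_\infty\|u\|_\infty.
\]
The superlinearity assumption (H2) then forces $|d_x\varphi|_x$ to be uniformly bounded, which by the standard viscosity-theoretic argument yields a global Lipschitz bound on $u$.

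The second step is to set up the auxiliary Hamiltonian
\[
G(x,p):=h(x,p)+\lambda(x)u(x)-c,\qquad (x,p)\in T^\ast M.
\]
Since $u$ is continuous and $h,\lambda\in C^{3}$, $G$ is continuous on $T^\ast M$; the convexity of $G(x,\cdot)$ is inherited from (H1), as the additional term does not depend on $p$. The subsolution inequality for $u$ is exactly $G(x,d_x u)\leqslant 0$ in the viscosity sense, so by the previous step $u$ is a Lipschitz subsolution of $G(x,d_x u)=0$. The quoted approximation lemma then provides, for every $\varepsilon>0$, a function $u_\varepsilon\in C^\infty(M)$ with $\|u-u_\varepsilon\|_\infty<\varepsilon$ and $G(x,d_x u_\varepsilon)\leqslant\varepsilon$ for every $x\in M$.

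The final step is bookkeeping. The pointwise inequality on $u_\varepsilon$ reads $h(x,d_x u_\varepsilon)+\lambda(x)u(x)\leqslant c+\varepsilon$; adding and subtracting $\lambda(x)u_\varepsilon(x)$ and using $|u(x)-u_\varepsilon(x)|<\varepsilon$ gives
\[
h(x,d_x u_\varepsilon)+\lambda(x)u_\varepsilon(x)\leqslant c+(1+\|\lambda\|_\infty)\varepsilon
\]
for every $x\in M$. Taking the supremum over $x$ and then the infimum over smooth test functions in \eqref{dcv} yields $c(H)\leqslant c+(1+\|\lambda\|_\infty)\varepsilon$; letting $\varepsilon\downarrow 0$ contradicts $c<c(H)$. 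The only point that is not purely formal is step one (Lipschitz regularity of continuous subsolutions), but this is a well-known consequence of convexity and superlinearity and poses no real obstacle; everything else is a direct application of the approximation lemma combined with the definition of $c(H)$.
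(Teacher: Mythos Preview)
Your proposal does not prove the stated lemma at all. The statement you were asked to prove is the approximation lemma (Lemma~2.3 in the paper): given a convex continuous $G$ and a Lipschitz subsolution $u$ of $G(x,d_xu)=0$, produce a smooth approximation $u_\varepsilon$ that is still an $\varepsilon$-subsolution. The paper itself does not prove this lemma either; it is quoted from \cite{DFIZ} and ultimately from \cite{FM_noncompact}, and its proof relies on mollification in local charts combined with Jensen's inequality and a partition-of-unity patching argument. None of this appears in your write-up.

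What you have actually written is a proof of Theorem~2.2 (non-existence of continuous subsolutions for $c<c(H)$), which in the paper \emph{uses} Lemma~2.3 as a black box. As a proof of Theorem~2.2 your argument is correct and follows the paper's proof essentially verbatim: show $u$ is Lipschitz via (H2), freeze the zero-order term to form $G(x,p)=h(x,p)+\lambda(x)u(x)-c$, apply the approximation lemma, and then estimate $H(x,d_xu_\varepsilon,u_\varepsilon)$ by controlling $|\lambda(x)||u(x)-u_\varepsilon(x)|$. The only cosmetic difference is that the paper fixes $\varepsilon=\frac{1}{2(1+\lambda_0)}(c(H)-c)$ to obtain the contradiction in one shot, whereas you let $\varepsilon\downarrow 0$; both are fine. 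But none of this addresses the lemma you were asked to prove.
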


\textit{Proof of Theorem 2.2}: From now on, we set
\begin{equation}\label{v-norm}
\lambda_0:=\|\lambda\|_\infty\geqslant1.
\end{equation}
Assume for $c<c(H)$, the equation \eqref{HJs} admits a continuous  subsolution $u:M\rightarrow\R$. Then for any $p\in D^+ u(x)$,
\begin{align*}
h(x,p)\leqslant c-\lambda(x)u(x)\leqslant c-\lambda_0\|u\|_\infty.
\end{align*}
Combining (H2) and the above inequality, we conclude that $u$ is Lipschitz. (A rigorous treatment can be found in \cite[Proposition 1.14]{Ishii_chapter}) Applying Lemma 2.3 to
\[
G(x,p):=h(x,p)+\lambda(x)u(x)-c,
\]
then for $\varepsilon=\frac{1}{2(1+\lambda_0)}(c(H)-c)>0$, there is $u_\varepsilon\in C^\infty(M)$ such that $\|u-u_\varepsilon\|_\infty<\varepsilon$ and
\[
h(x,d_x u_\varepsilon(x))+\lambda(x)u(x)\leqslant c+\varepsilon.
\]
Thus we obtain
\begin{align*}
&H(x,d_x u_\varepsilon(x),u_\varepsilon(x))=h(x,d_x u_\varepsilon(x))+\lambda(x)u_\varepsilon(x)\\
\leqslant &\,h(x,d_x u_\varepsilon(x))+\lambda(x)u(x)+\lambda_0\|u-u_\varepsilon\|_\infty\\
\leqslant &\,c+(1+\lambda_0)\varepsilon<c(H),
\end{align*}
this contradicts \eqref{dcv}.
\qed

\vspace{1em}
The fluctuation condition (H3) gives the existence of subsolutions to \eqref{HJs} when $c$ lies above the critical value. First, we need a priori estimates for subsolutions for \eqref{HJs}.

\begin{lemma}\label{uni-Lip}
 The $C^1$ subsolutions of \eqref{HJs} with $c=c(H)+1$ are equi-bounded and equi-Lipschitzian.
\end{lemma}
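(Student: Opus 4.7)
The plan is to prove the two conclusions in a single self-consistent argument: use the subsolution inequality to deduce a gradient bound of the form $|d_x u| \leqslant C_1 + \tfrac{1}{2}\|u\|_\infty$, then combine with pointwise values of $u$ at the distinguished points $x_1, x_2$ supplied by (H3) to close the estimate, and finally feed the resulting uniform $L^\infty$ bound back into the gradient bound.

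First I would exploit (H3). At $x_1$, the subsolution inequality at $c = c(H)+1$ reads $h(x_1, d_x u(x_1)) + u(x_1) \leqslant c(H)+1$. Using the universal lower bound $h \geqslant -e_0$ from (H2) with $K=0$ (cf.\ \eqref{e_0}), this gives $u(x_1) \leqslant c(H)+1+e_0$. Symmetrically, testing at $x_2$ where $\lambda(x_2) = -1$ yields $u(x_2) \geqslant -(c(H)+1+e_0)$. Set $A := c(H)+1+e_0$.

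Next I would derive a gradient estimate that is linear in $\|u\|_\infty$. From $h(x, d_x u(x)) \leqslant c(H)+1-\lambda(x) u(x) \leqslant c(H)+1 + \lambda_0 \|u\|_\infty$, the superlinearity assumption (H2) with $K = 2\lambda_0$ furnishes a constant $C^\ast(2\lambda_0)$ such that
\begin{equation*}
2\lambda_0 |d_x u(x)|_x \leqslant h(x, d_x u(x)) + C^\ast(2\lambda_0) \leqslant c(H)+1+C^\ast(2\lambda_0) + \lambda_0 \|u\|_\infty,
\end{equation*}
so $|d_x u(x)|_x \leqslant C_1 + \tfrac{1}{2}\|u\|_\infty$ with $C_1 := (c(H)+1+C^\ast(2\lambda_0))/(2\lambda_0)$.

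Now I close the loop. Because $\operatorname{diam}(M)=1$ and $u \in C^1$, the mean value inequality along a minimizing geodesic between $x_1$ (resp.\ $x_2$) and any point $y$ gives $u(y) \leqslant u(x_1) + \operatorname{Lip}(u) \leqslant A + C_1 + \tfrac{1}{2}\|u\|_\infty$ and $u(y) \geqslant u(x_2) - \operatorname{Lip}(u) \geqslant -A - C_1 - \tfrac{1}{2}\|u\|_\infty$. Taking the supremum over $y$ yields $\|u\|_\infty \leqslant A + C_1 + \tfrac{1}{2}\|u\|_\infty$, whence $\|u\|_\infty \leqslant 2(A+C_1)$. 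Substituting back into the gradient estimate furnishes a uniform Lipschitz constant $L := C_1 + A + C_1$, and both conclusions follow.

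There is no real obstacle here — the one subtlety, and the reason the argument works, is the self-consistent interplay between the $L^\infty$ and gradient bounds: one must choose $K$ in (H2) larger than $\lambda_0$ (hence the choice $K=2\lambda_0$) so that the coefficient in front of $\|u\|_\infty$ in the gradient estimate is strictly less than $1/\operatorname{diam}(M)$, allowing the final absorption step. Had we not normalized $\operatorname{diam}(M)=1$, the same method would work with $K = (1+\delta)\lambda_0 \operatorname{diam}(M)$ for any $\delta > 0$.
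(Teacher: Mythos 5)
Your proposal is correct and follows essentially the same route as the paper: pin down $u(x_1)$ and $u(x_2)$ via (H3) and the lower bound $h\geqslant -e_0$, use superlinearity with a choice of $K$ large enough that the Lipschitz bound carries a coefficient strictly less than $1$ in front of $\|u\|_\infty$, and close the two coupled estimates by absorption using $\operatorname{diam}(M)=1$. The only (immaterial) difference is that you take $K=2\lambda_0$ and absorb in the $L^\infty$ inequality, whereas the paper takes $K=1+\lambda_0$ and absorbs in the gradient inequality at the maximizing point of $|d_xv|_x$.
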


\begin{proof}
Let $v\in C^1(M)$ be a subsolution to \eqref{HJs} with $c=c(H)+1$. Due to (H3) and   \eqref{e_0}, we have
\begin{align*}
&-e_0 + v(x_1)\leqslant  h(x_1,d_x v(x_1))+ v(x_1)\leqslant  c(H)+1,\\
&-e_0 - v(x_2)\leqslant  h(x_2,d_x v(x_2))- v(x_2)\leqslant  c(H)+1,
\end{align*}
from which we deduce
\begin{equation}\label{bound}
v(x_1)\leqslant  c(H)+1+e_0,\quad v(x_2)\geqslant-(c(H)+1+e_0).
\end{equation}
Setting $L:=\max_{x\in M}|d_x v(x)|_{x}$, by the mean value theorem and the fact diam$(M)=1$,
\[
|v(x)-v(x_1)|\leqslant L, \quad |v(x_2)-v(x)|\leqslant L,\quad \text{for any}\,\,x\in M.
\]
Combining with \eqref{bound}, this implies that
\begin{equation}\label{eq:1}
\|v\|_\infty\leqslant  |c(H)+1+e_0|+L.
\end{equation}
Thus for $\bar x\in \arg\max_{x\in M} \{|d_xv(x)|_{x}\}$,
\begin{align*}
c(H)+1 \geqslant &\, h(\bar x,d_x v(\bar x))+ \lambda(\bar x)v(\bar x) \\
  \geqslant &\, h(\bar x,d_x v(\bar x))-\lambda_0\|v\|_\infty \\
   \geqslant &\, (1+\lambda_0)L-C^{\ast}(1+\lambda_0)-\lambda_0\|v\|_\infty \\
    \geqslant &\,L-C^{\ast}(1+\lambda_0)-\lambda_0|c(H)+1+e_0|
\end{align*}
where the first inequality follows from \eqref{eq:1} and the second from (H2) with $K=1+\lambda_0$. This implies
\begin{equation}\label{eq:2}
L\leqslant C^{\ast}(1+\lambda_0)+\lambda_0|c(H)+1+e_0|+c(H)+1,
\end{equation}
and the right hand side is independent of $v$. Combining \eqref{eq:1} and \eqref{eq:2} completes the proof .

\end{proof}

\begin{theorem}\label{exist-sub}
There exists a subsolution $v_0\in \mbox{\rm{Lip}}(M)$ to the equation \eqref{HJs} when $c = c(H)$.
%{\color{blue} In particular, $v_0$ is a strict subsolution to \eqref{HJs} when $c > c(H)$. }
\end{theorem}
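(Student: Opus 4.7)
The plan is to construct the desired subsolution as a uniform limit of near-minimizers in the variational definition \eqref{dcv} of $c(H)$, combining the \emph{a priori} estimates from Lemma \ref{uni-Lip} with the standard stability theorem for viscosity subsolutions.

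First, I would use the fact that $c(H)$ is defined as an infimum: for each integer $n\geqslant 1$, select $u_n\in C^\infty(M)$ satisfying
\[
\sup_{x\in M}H(x,d_x u_n(x),u_n(x))\leqslant c(H)+\tfrac{1}{n}.
\]
Each such $u_n$ is then a classical (hence viscosity) subsolution of \eqref{HJs} at level $c_n:=c(H)+\frac{1}{n}$, and \emph{a fortiori} at level $c(H)+1$. Therefore Lemma \ref{uni-Lip} applies uniformly to the whole family and shows that $\{u_n\}$ is equi-bounded and equi-Lipschitzian with constants depending only on $h$ and $\lambda$.

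Next, by the Arzelà--Ascoli theorem, a subsequence $u_{n_k}$ converges uniformly on $M$ to some function $v_0$, which inherits the uniform Lipschitz constant; in particular $v_0\in\mathrm{Lip}(M)$. Since each $u_{n_k}$ is a viscosity subsolution of $H(x,d_xu,u)=c_{n_k}$ with $c_{n_k}\to c(H)$, the classical stability theorem for viscosity subsolutions under uniform convergence of both the functions and the right-hand sides gives that $v_0$ is a viscosity subsolution of \eqref{HJs} at level $c(H)$, which is exactly what is claimed.

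The argument is essentially mechanical once Lemma \ref{uni-Lip} is available, and I do not foresee a substantive obstacle. The only subtlety worth flagging is that Lemma \ref{uni-Lip} is stated at the specific level $c(H)+1$, whereas here we are applying it to subsolutions at levels $c_n<c(H)+1$; this is harmless because a subsolution at any level $c\leqslant c(H)+1$ is automatically a subsolution at level $c(H)+1$, so the estimates transfer verbatim and yield the uniform bounds needed to extract the convergent subsequence.
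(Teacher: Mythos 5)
Your proposal is correct and follows essentially the same route as the paper: extract near-minimizers $v_n$ from the infimum defining $c(H)$, note they are all subsolutions at level $c(H)+1$ so that Lemma \ref{uni-Lip} gives equi-boundedness and equi-Lipschitz continuity, pass to a uniform limit by Arzel\`a--Ascoli, and conclude by stability of viscosity subsolutions. The subtlety you flag about transferring the estimates from level $c_n$ to level $c(H)+1$ is handled the same way (implicitly) in the paper's proof.
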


\begin{proof}
By the definition \eqref{dcv}, for each integer $n\geqslant1$, there exists $v_n\in C^\infty(M)$ such that
\begin{equation}\label{aux-eq1}
\sup_{x\in M}h(x,d_x v_n(x))+ \lambda(x)v_n(x)\leqslant  c(H)+\frac{1}{n}.
\end{equation}
Thus the sequence $\{v_n\}_{n\geqslant1}\subset C^1(M)$ are subsolutions of \eqref{HJs} with $c=c(H)+1$. By Lemma \ref{uni-Lip} and Ascoli-Arzel\`a theorem, it contains a subsequence $\{v_{n_k} \}_{k\in \mathbb{N}}$ uniformly converging on $M$ to some $v_0\in$ Lip$(M)$. Since $v_{n}$ are subsolution of
\[
H(x,d_x u,u)=c(H)+\frac{1}{n},
\]
the stability of subsolutions, see \cite[Theorem 8.1.1]{Fathi_book} or \cite[Theorem 5.2.5]{CS}, implies that $v_0$ is a subsolution of
\[
H(x,d_x u,u)=c(H).
\]
\end{proof}

\subsection{Solution semigroups and their fixed points}
We shall use $TM$ to denote the tangent bundle of $M$. As usual, a point of $T M$ will be denoted by $(x,\dot{x})$, where $x\in M$ and $\dot{x}\in T_x M$. We recall that, for a Hamiltonian $H:T^{\ast}M\times\R\rightarrow\R$ satisfying (H1)-(H2), the corresponding Lagrangian $L:TM\times\R\rightarrow\R$ is defined as
\[
L(x,\dot{x},u)=\sup_{p \in T_x^*M}\{p \cdot \dot{x}-H(x,p,u)\},
\]
i.e., $L$ is the convex dual of $H$ with respect to $p$. Since the equations \eqref{HJs} under consideration are parametrized by $c$, we shall adopt the notions
\[
H^c(x,p,u):=H(x,p,u)-c,\quad L^c(x,\dot{x},u):=L(x,\dot{x},u)+c.
\]
The following action functions are helpful in the definition and estimates of semigroups, they contain important information about the  variational principle defined by equation \eqref{HJe}.
\begin{proposition}\cite[Theorem 2.1, 2.2]{WWY3}\label{Implicit variational}
For any given $x_0\in M$ and $u_0,c\in \R$, there exist continuous functions $h^c_{x_0,u_0}(x,t), h_c^{x_0,u_0}(x,t)$ defined on $M\times (0,+\infty)$ by
\begin{equation}\label{eq:Implicit variational}
\begin{split}
h^c_{x_0,u_0}(x,t)=&\inf_{\substack{\gamma(t)=x\\ \gamma(0)=x_0 } }\Big\{u_0+\int_0^t L^c(\gamma(\tau), \dot \gamma(\tau),h^c_{x_0,u_0}(\gamma(\tau) ,\tau )  )\ d\tau\Big\},\\
h_c^{x_0,u_0}(x,t)=&\sup_{\substack{\gamma(t)=x_0\\ \gamma(0)=x } }\Big\{u_0-\int_0^t L^c(\gamma(\tau), \dot \gamma(\tau),h_c^{x_0,u_0}(\gamma(\tau) ,t-\tau )  )\ d\tau\Big\},
\end{split}
\end{equation}
where the infimum and supremum are taken among Lipschitz continuous curves $\gamma:[0,t]\rightarrow M$ and are achieved. We call $h^c_{x_0,u_0}(x,t)$ the backward action function and $h_c^{x_0,u_0}(x,t)$ the forward action function.
\end{proposition}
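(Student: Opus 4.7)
The plan is to break the implicit self-reference in \eqref{eq:Implicit variational} by first freezing the curve $\gamma$, reducing the definition to a standard Cauchy problem along $\gamma$, then taking the infimum (respectively supremum) over admissible curves, and finally verifying that the resulting function satisfies the implicit formula.

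\textbf{Step 1 (ODE along a fixed curve).} By Legendre duality and the structure \eqref{model}, the Lagrangian has the form $L(x,\dot x,u)=L_0(x,\dot x)-\lambda(x)u$, with $L_0$ the Fenchel conjugate of $h(x,\cdot)$; in particular $L$ is $\lambda_0$-Lipschitz in $u$, uniformly in $(x,\dot x)$. Fix $t>0$ and a Lipschitz curve $\gamma:[0,t]\to M$ with $\gamma(0)=x_0$ and $\gamma(t)=x$. The Carath\'eodory form of Picard--Lindel\"of applied to
\[
\dot w(\tau)=L^c(\gamma(\tau),\dot\gamma(\tau),w(\tau)),\qquad w(0)=u_0,
\]
produces a unique absolutely continuous solution $w_\gamma$ on $[0,t]$, and Gronwall's inequality yields an a priori bound depending only on $|u_0|$, $c$, $\lambda_0$, and $\int_0^t L_0(\gamma,\dot\gamma)\,d\tau$. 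For the forward action the same ODE is solved, along a curve from $x$ to $x_0$, with terminal condition $w(t)=u_0$.

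\textbf{Step 2 (Definition, consistency, attainment).} Set
\[
h^c_{x_0,u_0}(x,t):=\inf_{\gamma} w_\gamma(t),\qquad h_c^{x_0,u_0}(x,t):=\sup_{\gamma} \tilde w_\gamma(0),
\]
the extrema running over Lipschitz curves with the prescribed endpoints. The implicit identity \eqref{eq:Implicit variational} is verified by a dynamic programming argument: splitting a candidate curve at an intermediate time and exploiting the uniqueness of the Cauchy problem together with Bellman's principle shows that, along an optimal curve, $w_\gamma(\tau)=h^c_{x_0,u_0}(\gamma(\tau),\tau)$. For attainment, a minimizing sequence $\{\gamma_n\}$ enjoys uniform $C^0$-bounds on $w_{\gamma_n}$ from Step 1; superlinearity (H2) then produces a Tonelli-type compactness, so a subsequence converges uniformly to an admissible $\gamma_{\ast}$ with $\dot\gamma_n\rightharpoonup\dot\gamma_{\ast}$ weakly in $L^1$. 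Continuous dependence of the ODE on its data (using the $\lambda_0$-Lipschitz constant of $L$ in $u$ and Mazur's lemma to convert weak convergence of $\dot\gamma_n$ into convex-combination strong convergence) combined with the convexity of $L^c$ in $\dot x$ gives $w_{\gamma_{\ast}}(t)\leqslant\liminf_n w_{\gamma_n}(t)$, so $\gamma_{\ast}$ is a minimizer. Continuity of $h^c_{x_0,u_0}$ and $h_c^{x_0,u_0}$ on $M\times(0,+\infty)$ follows from the same stability by joining short geodesic segments to optimal curves to handle endpoint perturbations.

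\textbf{Main obstacle.} The genuinely nonstandard point is the self-consistency combined with the lower semi-continuity argument: since $L$ depends on the unknown value $w_\gamma$ itself, classical Tonelli lower semi-continuity is not directly applicable. The remedy is to couple Gronwall-type estimates (ensuring that $\gamma\mapsto w_\gamma$ is continuous in the $C^0$-topology on curves) with the calculus-of-variations compactness argument, so as to transfer the convexity and superlinearity of $L^c$ in $\dot x$ into lower semi-continuity of the implicitly defined action.
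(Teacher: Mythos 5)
The paper itself offers no proof of this proposition: it is imported verbatim from [WWY3, Theorems 2.1, 2.2], so the only meaningful comparison is with the strategy of the cited source. Your overall plan --- freeze the curve, solve the Carath\'eodory ODE $\dot w=L^c(\gamma,\dot\gamma,w)$ along it, set $h^c_{x_0,u_0}(x,t)=\inf_\gamma w_\gamma(t)$, then verify the implicit identity and run a Tonelli compactness argument --- is indeed the route taken there, and for the present model the linearity $L^c=l^c(x,\dot x)-\lambda(x)u$ even gives $w_\gamma$ in closed form via an integrating factor, which is what makes your Mazur-lemma lower semicontinuity step viable.

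Two steps, however, have genuine gaps. First, your verification of \eqref{eq:Implicit variational} establishes only one inequality: showing that the optimal curve is calibrated, i.e.\ $w_{\gamma^*}(\tau)=h^c_{x_0,u_0}(\gamma^*(\tau),\tau)$, proves that $h^c_{x_0,u_0}(x,t)$ is \emph{greater than or equal to} the right-hand infimum. The reverse inequality requires that for \textbf{every} admissible curve $\gamma$ one has $u_0+\int_0^tL^c(\gamma,\dot\gamma,h^c_{x_0,u_0}(\gamma(\tau),\tau))\,d\tau\geqslant h^c_{x_0,u_0}(x,t)$, and this does not follow from $h^c_{x_0,u_0}(\gamma(\tau),\tau)\leqslant w_\gamma(\tau)$ by monotonicity of $L^c$ in $u$, because under (H3) $\lambda$ changes sign, so $L^c$ is neither increasing nor decreasing in $u$; the easy comparison works only where $\lambda\geqslant0$. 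One needs the Markov property together with a Gronwall-plus-partition argument comparing $v(\tau):=h^c_{x_0,u_0}(\gamma(\tau),\tau)$ with the ODE solutions restarted from $v(\tau)$ on short subintervals. This is the crux of the proposition and is absent from your sketch. Second, the compactness step is circular as written: the Gronwall bound of Step 1 controls $\|w_{\gamma_n}\|_\infty$ only in terms of $\int_0^t l^c(\gamma_n,\dot\gamma_n)\,d\tau$, which is precisely the quantity you must bound uniformly along a minimizing sequence before superlinearity yields weak compactness. The standard repair --- first work on a time interval short enough that $\lambda_0 t e^{\lambda_0 t}<1$, where boundedness of $w_{\gamma_n}(t)$ can be fed back to bound the action, then propagate to all $t>0$ by the semigroup/Markov property --- needs to be made explicit.
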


\begin{remark}
Let $\gamma\in Lip([0,t],M)$ achieve the infimum (resp. supremum) in \eqref{eq:Implicit variational} and
\[
x(s):=\gamma(s), \quad u(s):=h^c_{x_0,u_0}(x(s),s)\,(\text{resp.}\,\,h_c^{x_0,u_0}(x(s),t-s)), \quad p(s):=\frac{\partial L}{\partial \dot x}(x(s),\dot x(s),u(s)).
\]
Then $(x(s),p(s),u(s))$ satisfies the system
\begin{equation}
 \label{eq:ode}
\left\{
\begin{aligned}
\dot x&=\frac{\partial H }{\partial p}(x,p,u)\\
\dot p &=-\frac{\partial H }{\partial x}(x,p,u)-\frac{\partial H }{\partial u}(x,p,u) \cdot p \quad (x,p,u)\in T^*M \times \R, \\
\dot u&=\frac{\partial H }{\partial p}(x,p,u) \cdot p-H^c(x,p,u)
\end{aligned}
\right.
\end{equation}
with $x(0)=x_0,x(t)=x\,($resp. $x(0)=x,x(t)=x_0)$ and $\lim_{s\to 0^+ }u(s)=u_0\,($resp. $\lim_{s\to t^-}u(s)=u_0)$.
\end{remark}

We collect the properties of the above action functions that are used in this paper into the following
\begin{proposition}{\cite{WWY2}}\label{Minimality}
For each $c\in\R$, the action function $h^c_{x_0,u_0}(x,t)\,\,($resp.\,\,$h^{x_0,u_0}_c(x,t)) $ satisfies
\begin{enumerate}
	\item[(1)] \textbf{(Minimality)} Given $x_0,x\in M $  and $u_0\in \R $ and $t>0$, let $S^{x,t}_{x_0,u_0}$ be the set of the solutions $(x(s),p(s),u(s))$ of \eqref{eq:ode} on $[0,t]$ with $x(0)=x_0,x(t)=x,u(0)=u_0\,\,($resp. $x(0)=x, x(t)=x_0, u(t)=u_0)$. Then
    \begin{equation}\label{eq:Minimality}
    \begin{split}
    h^c_{x_0,u_0}(x,t)=&\inf \{u(t):(x(s),p(s),u(s))\in S^{x,t}_{x_0,u_0}\},\\
    (\text{resp.}\,\,h_c^{x_0,u_0}(x,t)=&\sup \{u(0):(x(s),p(s),u(s))\in S^{x,t}_{x_0,u_0}\}.)
    \end{split}
    \end{equation}
    for any $ (x,t)\in M\times(0,+\infty)$. As a result, $h^c_{x_0,u_0}(x,t)=u\Leftrightarrow h_c^{x,u}(x_0,t)=u_0$.

	\item[(2)]\textbf{(Monotonicity)} Given $x_0\in M, u_1< u_2\in\R$, for any $t>0$ and all $x\in M$,
    \[
	h^c_{x_0,u_1}(x,t)< h^c_{x_0,u_2}(x,t),\quad h_c^{x_0,u_1}(x,t)< h_c^{x_0,u_2}(x,t)
    \]

	\item[(3)]\textbf{(Markov property)} Given $x_0 \in M,u_0 \in \R $, we have
	\begin{equation}\label{markov}
    \begin{split}
	&h^c_{x_0,u_0}(x, t+s)=\inf_{y\in M}h^c_{y,h^c_{x_0,u_0}(y,t)}(x,s),\\
    &h_c^{x_0,u_0}(x, t+s)=\sup_{y\in M}h_c^{y,h_c^{x_0,u_0}(y,t)}(x,s).
    \end{split}
	\end{equation}
	for any $s,t>0$ and all $x\in M$.

	\item[(4)]\textbf{(Lipschitz continuity)} The function $(x_0,u_0,x,t)\mapsto h^c_{x_0,u_0}(x,t)\,\,($resp. $h_c^{x_0,u_0}(x,t))$ is locally Lipschitz continuous on $M\times \R\times M\times (0,+\infty ) $.
\end{enumerate}
\end{proposition}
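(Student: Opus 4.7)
The proposition gathers four structural properties of the action functions, all rooted in the interplay between the implicit variational formula \eqref{eq:Implicit variational} and the characteristic system \eqref{eq:ode}. My plan is to handle the four items in the order (1), (3), (2), (4), since each later one builds on the machinery set up before it.

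For Minimality (1), I would first verify that every minimizer $\gamma$ of the backward problem, paired with $u(s):=h^c_{x_0,u_0}(\gamma(s),s)$ and $p(s):=\partial L/\partial\dot x(\gamma(s),\dot\gamma(s),u(s))$, satisfies the characteristic system \eqref{eq:ode}. This comes from taking inner variations of the variational functional while treating the implicit $u$-dependence carefully: the resulting stationarity condition is exactly \eqref{eq:ode}, with the $\dot u=p\cdot\dot x-H^c$ equation falling out of the definition of $u(s)$ together with the Legendre transform. Conversely, any orbit in $S^{x,t}_{x_0,u_0}$ yields $u(t)$ as the value of the variational functional evaluated on its $x$-projection, so the two infima coincide. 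Reading the ODE in both time directions then delivers the duality $h^c_{x_0,u_0}(x,t)=u\Leftrightarrow h_c^{x,u}(x_0,t)=u_0$.

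For the Markov property (3), a dynamic programming argument suffices: concatenating an optimal curve for $h^c_{x_0,u_0}(y,t)$ with one for $h^c_{y,h^c_{x_0,u_0}(y,t)}(x,s)$ gives the $\leq$ direction, while splitting an optimal curve for $h^c_{x_0,u_0}(x,t+s)$ at time $t$ and using the minimality of its restriction to $[0,t]$ gives the $\geq$ direction; in both steps, the characterization in (1) ensures the $u$-components glue correctly. For Monotonicity (2), I would fix a minimizer $\gamma$ of the $u_2$-problem and compare the two solutions of $\dot u=L^c(\gamma,\dot\gamma,u)$ along $\gamma$ with initial values $u_1<u_2$; a Grönwall estimate (using that (H1) together with the $C^3$ regularity makes $L$ locally Lipschitz in $u$) preserves the strict ordering, and substituting the first solution into the minimality characterization gives $h^c_{x_0,u_1}(x,t)\leq u_1(t)<u_2(t)=h^c_{x_0,u_2}(x,t)$.

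Lipschitz continuity (4) is then a compactness-plus-a-priori-estimate argument: on compact parameter sets with $t$ bounded away from $0$, superlinearity (H2) combined with the a priori boundedness of $u(s)$ along optimizers produces uniform Lipschitz bounds on the minimizing trajectories, and inserting a minimizer for one parameter choice as a competitor for a nearby choice yields the local Lipschitz bound on the action function. The main obstacle throughout is step (1): because $L^c$ depends on the very $u$ one is computing, the standard Euler--Lagrange derivation has to be replaced by a variational calculation that respects this self-referential structure, and the non-standard $-\partial_u H\cdot p$ term in the $\dot p$-equation of \eqref{eq:ode} is precisely what emerges from that careful calculation; every subsequent part of the proposition is then a relatively routine consequence.
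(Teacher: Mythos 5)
First, note that the paper offers no proof of this proposition: it is imported verbatim from \cite{WWY2} (see also \cite{WWY1,WWY3}), so your sketch can only be compared with the original arguments there. Your overall architecture --- derive the characteristic system \eqref{eq:ode} from inner variations of the implicit functional, then deduce (3), (2), (4) --- is broadly the route taken in those works, but there is a genuine gap at the pivot of item (1), and it propagates into your argument for (2).

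The gap is in the ``conversely'' half of (1), where you assert that any orbit in $S^{x,t}_{x_0,u_0}$ ``yields $u(t)$ as the value of the variational functional evaluated on its $x$-projection, so the two infima coincide.'' Along such an orbit one does have $\dot u=p\cdot\dot x-H^c(x,p,u)=L^c(x,\dot x,u)$, hence $u(t)=u_0+\int_0^t L^c(x(\tau),\dot x(\tau),u(\tau))\,d\tau$; but the functional in \eqref{eq:Implicit variational} evaluated on the projection $\gamma=x(\cdot)$ has integrand $L^c(\gamma,\dot\gamma,h^c_{x_0,u_0}(\gamma(\tau),\tau))$, carrying the implicitly defined value $h^c_{x_0,u_0}(\gamma(\tau),\tau)$ rather than the orbit's own $u(\tau)$. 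For a non-minimizing characteristic these differ, so the inequality $h^c_{x_0,u_0}(x,t)\leqslant u(t)$ does not follow from inserting $\gamma$ as a competitor. What is needed --- and what is the technical heart of \cite{WWY1,WWY2} --- is a comparison (Gronwall) lemma for the implicit action: along any Lipschitz curve $\gamma$ issuing from $x_0$, the function $\tau\mapsto h^c_{x_0,u_0}(\gamma(\tau),\tau)$ is dominated by the solution of $\dot U=L^c(\gamma,\dot\gamma,U)$ with $U(0)=u_0$, using the local Lipschitz dependence of $L$ on $u$. The same lemma is required to justify $h^c_{x_0,u_1}(x,t)\leqslant u_1(t)$ in your proof of (2): there $\gamma$ is a minimizer for the $u_2$-problem, so the triple built from $\gamma$ and your solution $u_1(\cdot)$ of $\dot u=L^c(\gamma,\dot\gamma,u)$ is not a characteristic of the $u_1$-problem, and the minimality characterization (1) cannot be invoked for it. Supply that comparison lemma (it is also the natural source of the strictness in (2)); with it in place, the remainder of your outline --- dynamic programming for (3), a priori compactness of minimizers plus competitor insertion for (4) --- is sound and consistent with the cited proofs.
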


Based on the backward\,/\,forward action function defined above, we introduce, for each $c\in\R$, two families of nonlinear operators $\{T^{c,\pm}_t\}_{t\geqslant 0}$. For each $\varphi\in C(M)$ and $(x,t)\in M\times(0,+\infty)$,
\begin{equation}\label{eq:Tt-+ rep}
\begin{split}
T^{c,-}_t\varphi(x):=\inf_{y\in M}h^c_{y,\varphi(y)}(x,t),\\
T^{c,+}_t\varphi(x):=\sup_{y\in M}h_c^{y,\varphi(y)}(x,t).
\end{split}
\end{equation}
One easily see that for every $t\geqslant0, T^{c,\pm}_t$ maps $C(M)$ to itself and satisfies for any $t,s\geqslant0$,
\[
T^{c,\pm}_{t+s}=T^{c,\pm}_{t}\circ T^{c,\pm}_{s},
\]
so that the families of operators $\{T^{c,\pm}_t\}_{t\geq0}$ form two semigroups. These semigroups are related to the evolutionary equation \eqref{HJe} by the fact that
\begin{proposition}\label{solution}
Assume that for $\varphi\in C(M)$, $U^{c,\pm}:M\times[0,\infty)\rightarrow\R$ are functions defined by
\[
U^{c,\pm}(x,t):=T^{c,\pm}_t\varphi(x),
\]
then $U^{c,-}$ is the unique solution to \eqref{HJe} and $-U^{c,+}$ is the unique solution to
\begin{equation}\label{reverse ham}
\begin{cases}
\partial_t u+\breve{H}(x,\partial_x u,u)=c,\quad (x,t)\in M\times[0,+\infty),\\
u(0,x)=-\varphi(x),
\end{cases}
\end{equation}
where $\breve{H}(x,p,u)=H(x,-p,-u)$.
\end{proposition}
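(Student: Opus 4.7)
The proof breaks into three ingredients: verifying that $U^{c,-}$ is a viscosity solution of \eqref{HJe}, invoking a comparison principle to upgrade existence to uniqueness, and finally deducing the companion statement for $U^{c,+}$ by dualising to the Hamiltonian $\breve H$.

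The plan for Step 1 is to read off continuity and the initial condition directly from the action function. By the local Lipschitz regularity of $h^c_{y,\varphi(y)}(x,t)$ in all its arguments (Proposition \ref{Minimality}(4)) plus the superlinearity (H2), the infimum defining $T^{c,-}_t\varphi(x)$ in \eqref{eq:Tt-+ rep} is attained at some $y^*(x,t)$ staying in a bounded region; this gives joint continuity of $U^{c,-}$ on $M\times(0,+\infty)$. To recover $U^{c,-}(\cdot,t)\to\varphi$ as $t\to 0^+$, note that superlinearity forces $y^*(x,t)\to x$ (otherwise the kinetic integral $\int_0^t L\,d\tau$ blows up), after which Lipschitz continuity closes the argument.

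For Step 2, I would use the Markov property (Proposition \ref{Minimality}(3)) to convert \eqref{eq:Tt-+ rep} into the dynamic programming identity
\[
U^{c,-}(x_0,t_0)=\inf_{y\in M}h^c_{y,U^{c,-}(y,t_0-s)}(x_0,s),\qquad 0<s<t_0.
\]
Given a $C^1$ test function $\phi$ such that $U^{c,-}-\phi$ attains a local maximum at $(x_0,t_0)$, I would plug the curve $\gamma(\tau)=\exp_{x_0}(-(s-\tau)\dot x_0)$ into \eqref{eq:Implicit variational} as an admissible competitor; a short Gronwall argument (using Proposition \ref{Minimality}(4) in the $u$-variable) controls the implicit self-dependence of the action function and yields
\[
\phi(x_0,t_0)-\phi(\gamma(0),t_0-s)\le \int_0^s L^c(\gamma(\tau),\dot\gamma(\tau),U^{c,-}(x_0,t_0))\,d\tau+o(s).
\]
Dividing by $s$, sending $s\to 0^+$ and taking the supremum over $\dot x_0\in T_{x_0}M$ via convex duality gives the subsolution inequality $\partial_t\phi(x_0,t_0)+H(x_0,d_x\phi(x_0,t_0),U^{c,-}(x_0,t_0))\le c$. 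The supersolution inequality is obtained symmetrically by selecting near-optimal curves using Proposition \ref{Minimality}(1).

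For Step 3, since $H(x,p,u)=h(x,p)+\lambda(x)u$ is affine (hence uniformly Lipschitz) in $u$ and convex coercive in $p$, the classical comparison principle for \eqref{HJe} applies and forces uniqueness of $U^{c,-}$. For the forward semigroup, a direct Legendre transform computation yields $\breve L(x,\dot x,u)=L(x,-\dot x,-u)$, and substituting $u\mapsto-u$, $\tau\mapsto t-\tau$ into \eqref{eq:Implicit variational} gives the key identity
\[
-h_c^{x_0,u_0}(x,t)=\breve{h}^{\,c}_{x_0,-u_0}(x,t),
\]
which translates into $-T^{c,+}_t\varphi=\breve{T}^{\,c,-}_t(-\varphi)$. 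Applying the first two steps to $\breve H$ then identifies $-U^{c,+}$ as the unique viscosity solution of \eqref{reverse ham}. The main obstacle is the implicit self-dependence of $h^c_{x_0,u_0}$ on itself in \eqref{eq:Implicit variational}: it is exactly at the infinitesimal estimate of Step 2 that the classical Fathi argument must be upgraded, and the Lipschitz control from Proposition \ref{Minimality}(4) is what makes the Gronwall closure possible.
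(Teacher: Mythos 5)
Your plan is sound, but note first that the paper does not actually prove this proposition: Section 2.2 is explicitly a recollection of tools from \cite{WWY1}--\cite{WWY3}, and Proposition \ref{solution} is quoted there as a known fact (it is established in \cite{WWY2}). So your text is a reconstruction of the argument of those references rather than an alternative to anything in this paper, and as a reconstruction it is essentially correct. The dynamic-programming identity obtained from the Markov property is the right starting point; the Gronwall control of the implicit $u$-dependence works precisely because $L^c(x,\dot x,u)=l^c(x,\dot x)-\lambda(x)u$ is affine in $u$ with $\|\lambda\|_\infty<\infty$; and your duality identity $-h_c^{x_0,u_0}(x,t)=\breve h^{\,c}_{x_0,-u_0}(x,t)$ (hence $-T^{c,+}_t\varphi=\breve T^{\,c,-}_t(-\varphi)$) checks out via the time reversal $\eta(\tau)=\gamma(t-\tau)$, the relation $\breve L(x,\dot x,u)=L(x,-\dot x,-u)$, and the uniqueness part of Proposition \ref{Implicit variational}. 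Two points deserve more care than you give them. First, the comparison principle is not quite ``classical'': $H$ is not monotone in $u$, so one must rescale $u=e^{Kt}v$ with $K>\|\lambda\|_\infty$ to restore properness on finite time intervals, and since $h$ is superlinear the structure condition $|H(x,p,u)-H(y,p,u)|\leqslant\omega\bigl(d(x,y)(1+|p|_x)\bigr)$ fails for unbounded $p$, so you need the a priori Lipschitz bound on $T^{c,-}_t\varphi$ (or the refined comparison for convex coercive Hamiltonians in \cite{Ishii_chapter}) before uniqueness among continuous solutions follows. Second, the attainment of the infimum and supremum that your supersolution step relies on is the content of Proposition \ref{Implicit variational}, not of Proposition \ref{Minimality}(1); the latter concerns the representation by characteristics.
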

Due to the above proposition, we call $\{T^{c,-}_t\}_{t\geqslant 0}$ the \textit{backward solution semigroup} and $\{T^{c,+}_t\}_{t\geqslant 0}$ the \textit{forward solution semigroup} to \eqref{HJe}. It turns out that the notion of subsolution (resp. strict subsolution) is equivalent to the $t$-monotonicity (-strict monotonicity) of the solution semigroups. Here, strict subsolutions mean subsolutions that the inequality in the definition of which is strict at any $x\in M$.
\begin{proposition}\label{mono}
Assume $c\geqslant c(H)$, any $v\in C(M)$ is a subsolution to \eqref{HJs} if and only if
\[
v(x)\leqslant\,\,T^{c,-}_t v(x)\quad\text{or}\quad v(x)\geqslant\,\,T^{c,+}_t v(x),\quad\text{for any}\,\,x\in M\,\,\text{and}\,\,t\geqslant0.
\]
Moreover, if $v\in C(M)$ is a strict subsolution, the corresponding \textbf{strictly} inequalities hold.
\end{proposition}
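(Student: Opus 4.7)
The plan is to establish the equivalence in detail for the backward semigroup $T^{c,-}_t$; the forward case follows by the same mechanism applied to the forward action $h_c^{y,v(y)}$, with the Gronwall comparison run backwards in time. Throughout I would exploit the specific structure of the Lagrangian dual to \eqref{model}, namely that $L(x,\dot x,u)$ is \emph{affine in $u$} with the $u$-coefficient equal to $-\lambda(x)$.

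\emph{Subsolution $\Longrightarrow$ monotonicity.} Any continuous viscosity subsolution $v$ of \eqref{HJs} is automatically Lipschitz: on $D^+v(x)$ one has $h(x,p)\le c+\lambda_0\|v\|_\infty$, which combined with (H2) bounds $|p|_x$ uniformly (as in the proof of Theorem 2.2). Rademacher and the Fenchel inequality then give, along any absolutely continuous curve $\gamma:[0,t]\to M$ with $\gamma(0)=y$,
\[
v(\gamma(\tau))-v(y)\;\le\;\int_0^\tau L^c\bigl(\gamma(s),\dot\gamma(s),v(\gamma(s))\bigr)\,ds,\qquad \tau\in[0,t].
\]
Choose $\gamma$ to be the minimizer in the definition of $h^c_{y,v(y)}(x,t)$ from Proposition 2.5, set $w(\tau):=v(\gamma(\tau))$, $\tilde w(\tau):=h^c_{y,v(y)}(\gamma(\tau),\tau)$, and $g:=w-\tilde w$, so that $g(0)=0$, $w'\le L^c(\gamma,\dot\gamma,w)$ a.e., and $\tilde w'=L^c(\gamma,\dot\gamma,\tilde w)$. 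Subtracting and using affinity of $L^c$ in $u$ yields $g'\le-\lambda(\gamma)g$ a.e.; the positive integrating factor $\mu(\tau):=\exp\bigl(\int_0^\tau\lambda(\gamma(s))\,ds\bigr)$ forces $\mu g$ to be non-increasing, hence $g\le 0$. Consequently $v(x)\le h^c_{y,v(y)}(x,t)$, and taking the infimum over $y\in M$ gives $v\le T^{c,-}_tv$.

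\emph{Monotonicity $\Longrightarrow$ subsolution.} Fix $x_0\in M$ and a $C^1$ test function $\phi$ with $v-\phi$ attaining a local maximum at $x_0$, normalized so $v(x_0)=\phi(x_0)$. For arbitrary $\xi\in T_{x_0}M$ and small $t>0$, let $y:=\exp_{x_0}(-t\xi)$ and take $\gamma$ to be the short geodesic from $y$ to $x_0$ with $\dot\gamma(0)=\xi$. Plugging $\gamma$ into the defining formula for $h^c_{y,v(y)}$ and using continuity of that action function produces the short-time expansion
\[
T^{c,-}_tv(x_0)\;\le\;h^c_{y,v(y)}(x_0,t)\;=\;v(y)+t\bigl[L(x_0,\xi,v(x_0))+c\bigr]+o(t).
\]
Combining this with $v(y)\le\phi(y)=\phi(x_0)-t\langle d\phi(x_0),\xi\rangle+o(t)$ and the hypothesis $\phi(x_0)=v(x_0)\le T^{c,-}_tv(x_0)$, then dividing by $t$ and letting $t\downarrow 0$, gives $\langle d\phi(x_0),\xi\rangle-L(x_0,\xi,v(x_0))\le c$; taking the supremum over $\xi$ produces $H(x_0,d\phi(x_0),v(x_0))\le c$, the subsolution condition. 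If $v$ is a strict subsolution, one picks up a uniform positive gap in the Fenchel step, which the Gronwall comparison propagates to give $v<T^{c,-}_tv$ (and symmetrically $v>T^{c,+}_tv$) for every $t>0$.

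\emph{Main obstacle.} The technically delicate point is the implicit self-reference in \eqref{eq:Implicit variational}: since the unknown $h^c$ appears inside the Lagrangian, a straight comparison does not immediately produce the desired inequality. The resolution is to restrict the comparison to the extremal trajectory furnished by Proposition 2.5 (equivalently, the ODE \eqref{eq:ode} coming from the Minimality property), on which the implicit integral equation collapses to a genuine first-order ODE for $\tilde w$ that can be matched against the a.e.\ inequality satisfied by $w=v\circ\gamma$. The sign-changing nature of $\lambda$ causes no real trouble since the integrating factor $\mu$ is always positive; however, for the forward semigroup $T^{c,+}_t$ one must replace the initial condition $g(0)=0$ by the terminal condition $g(t)=0$ and run the Gronwall estimate backwards in time.
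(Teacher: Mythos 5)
The paper itself offers no proof of Proposition \ref{mono}: it is stated as a known fact imported from \cite{WWY1}--\cite{WWY3} (the equivalence between subsolutions and $t$-monotonicity of the semigroups is established in \cite{WWY2}). Your argument is essentially the standard proof from those references and both halves are structurally sound: the Fenchel inequality combined with the Gronwall comparison along the extremal of Proposition \ref{Implicit variational} --- where, by the remark following that proposition, the implicit relation does collapse to $\dot{\tilde w}=L^c(\gamma,\dot\gamma,\tilde w)$ along the minimizer --- correctly yields $v(x)\leqslant h^c_{y,v(y)}(x,t)$ for every $y$, and the test-function short-time expansion correctly recovers the subsolution property from the monotonicity. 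Three points you gloss over do need explicit justification. First, the inequality $v(\gamma(\tau))-v(y)\leqslant\int_0^\tau L^c(\gamma,\dot\gamma,v(\gamma))\,ds$ for a merely Lipschitz subsolution is not an immediate consequence of Rademacher plus Fenchel, because the minimizing curve may spend positive time in the null set where $v$ fails to be differentiable; one should either invoke the standard domination-along-curves lemma for Lipschitz subsolutions of convex Hamiltonians, or mollify via Lemma 2.3 with $u$ frozen, exactly as the paper does in the proof of Theorem 2.2. Second, the expansion $h^c_{y,v(y)}(x_0,t)=v(y)+tL^c(x_0,\xi,v(x_0))+o(t)$ requires continuity of the action function up to $t=0^+$ uniformly along the chosen geodesics, which is not covered by item (4) of Proposition \ref{Minimality} (stated only on $(0,+\infty)$) and must be quoted separately from \cite{WWY1,WWY2}. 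Third, for the strict statement you need the gap $c-H(x,p,v(x))$ to be bounded below by a positive constant, which follows from compactness of $M$ and upper semicontinuity of $x\mapsto\max_{p\in D^+v(x)}H(x,p,v(x))$, not from pointwise strictness alone. None of these is a flaw in the approach; they are precisely the technical lemmas the cited works supply.
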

Thus if $T^{c,-}_t v$ \big(resp. $T^{c,+}_t v$\big) has an upper bound (resp. lower bound), then the uniform limits
\[
u^c_-:=\lim_{t\rightarrow\infty}T^{c,-}_t v,\quad u^c_+:=\lim_{t\rightarrow\infty}T^{c,+}_t v
\]
exist and must be fixed points of the corresponding semigroups. This leads to
\begin{definition}\label{weak kam}
We use
\begin{itemize}
  \item  $\mathcal{S}^c_-$ to denote the set of all fixed points of $\{T^{c,-}_t\}_{t\geqslant 0}$, which is also the set of solutions to \eqref{HJs}.
  \item $\mathcal{S}^c_+$ to denote the set of all fixed points of $\{T^{c,+}_t\}_{t\geqslant 0}$. It follows that $u^c_+ \in \mathcal{S}^c_+ $ if and only if $-u^c_+$ is a solution to
     \begin{equation}\label{reverse eq}
      \breve{H}(x,d_x u,u)=c,\quad x\in M;
    \end{equation}
\end{itemize}
\end{definition}

\subsection{Viscosity solutions via solution semigroups}
Since the Hamiltonian has the form \eqref{model}, for $c\geqslant c(H)$,
\[
L^c(x,\dot{x},u)=l^c(x,\dot{x})-\lambda(x)u,
\]
where
\[
l^c(x,\dot{x})=\sup_{p\in T^{\ast}_x M}\{p\cdot\dot{x}-h(x,p)\}+c
\]
is a Tonelli Lagrangian. By Theorem \ref{exist-sub}, there is a subsolution $v_0\in$ Lip $(M)$ to \eqref{HJs} for $c\geqslant c(H)$. The discussion in 2.2 shows that $\lim_{t\rightarrow\infty}T^{c,-}_t v_0$ (if exists!) must be a solution to \eqref{HJs}. By Proposition \ref{mono}, the existence of the limit is equivalent to the the following

\begin{lemma}\label{back-bd}
For $c\geqslant c(H)$ and any subsolution $v$ to \eqref{HJs}, there is $B_0(H,c)>0$ such that
\[
T_{t}^{c,-}v(x)\leqslant  B_0\quad\text{and}\quad T_{t}^{c,+}v(x)\geqslant  B_0 ,\quad \text{for any}\quad x\in M,\,\,t>0.
\]
\end{lemma}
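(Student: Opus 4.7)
The plan is to exploit the variational representations
\[
T_t^{c,-} v(x) = \inf_{y \in M} h^c_{y, v(y)}(x, t), \qquad T_t^{c,+} v(x) = \sup_{y \in M} h_c^{y, v(y)}(x, t),
\]
combined with the fluctuation hypothesis (H3), to produce concrete competitor curves whose values are bounded uniformly in $t$ and $x$.

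First I would handle the upper bound on $T_t^{c,-} v$. By taking the infimum candidate $y = x_1$ we already have $T_t^{c,-} v(x) \leq h^c_{x_1, v(x_1)}(x, t)$, so the task is to estimate the right-hand side. For $t \geq 1$, construct $\gamma : [0, t] \to M$ with $\gamma(0) = x_1$, $\gamma(t) = x$, frozen at $x_1$ on $[0, t-1]$ and a unit-speed minimising geodesic from $x_1$ to $x$ on $[t-1, t]$ (possible since $\mathrm{diam}(M) = 1$). By the minimality characterisation of Proposition \ref{Minimality}(1), $h^c_{x_1, v(x_1)}(x, t)$ is bounded above by $u(t)$, where $u$ solves the characteristic equation
\[
\dot u(s) = l^c(\gamma(s), \dot\gamma(s)) - \lambda(\gamma(s))\, u(s), \qquad u(0) = v(x_1).
\]
On $[0, t-1]$ the ODE reduces, thanks to $\lambda(x_1) = 1$, to the autonomous linear equation $\dot u = l^c(x_1, 0) - u$, whose explicit solution $u(s) = l^c(x_1, 0) + (v(x_1) - l^c(x_1, 0)) e^{-s}$ satisfies $|u(s)| \leq |l^c(x_1, 0)| + |v(x_1)|$ for all $s \geq 0$; this is the central estimate and the step where (H3) is indispensable. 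On the short closing interval $[t-1, t]$ both $l^c(\gamma, \dot\gamma)$ and $\lambda$ are bounded by constants depending only on $H$ and $c$, so a Gronwall estimate promotes the bound at $s = t-1$ to a bound at $s = t$ still independent of $t$ and $x$.

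The lower bound on $T_t^{c,+} v$ I would obtain by the time-reversal duality of Proposition \ref{solution}: $-T_t^{c,+} v$ solves the evolutionary equation associated with $\breve H(x, p, u) = h(x, -p) - \lambda(x) u$ with initial datum $-v$. A direct check shows that $-v$ is a subsolution of $\breve H(x, d_x u, u) = c$, that $c(\breve H) = c(H)$, and that the role of (H3) is now played by $x_2$ (since $-\lambda(x_2) = 1$). Repeating the construction of the previous paragraph for $\breve H$ about $x_2$ yields $-T_t^{c,+} v(x) \leq \tilde B_0$, hence $T_t^{c,+} v(x) \geq -\tilde B_0$. The residual short-time range $t \in (0, 1]$ is handled by the joint continuity of $(x, t) \mapsto T_t^{c,\pm} v(x)$ on the compact cylinder $M \times [0, 1]$ together with $T_0^{c, \pm} v = v$. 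Equi-boundedness of the family of subsolutions (an adaptation of Lemma \ref{uni-Lip} to any $c \geq c(H)$) then guarantees that the constants obtained depend only on $H$ and $c$.

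The main obstacle I foresee is precisely this uniformity in $t$. Without the positivity $\lambda(x_1) = +1 > 0$ supplied by (H3), the ODE on the frozen segment would not contract toward a bounded equilibrium and $u(t-1)$ would grow with $t$; it is the fluctuation hypothesis, not just the superlinearity, that tames the value of the action along arbitrarily long trajectories.
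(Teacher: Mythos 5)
Your overall strategy is the same as the paper's: reduce to a single competitor based at $x_1$, use $\lambda(x_1)=1$ to make the $u$-equation contract along a curve frozen at $x_1$, and treat $T^{c,+}_t$ by the $\breve H$-symmetry. However, your central step is mis-justified, and the gap is not cosmetic. You claim that $h^c_{x_1,v(x_1)}(x,t)\leqslant u(t)$ where $u$ solves $\dot u=l^c(\gamma,\dot\gamma)-\lambda(\gamma)u$ along your frozen-then-geodesic curve, citing Proposition \ref{Minimality}(1). That proposition is an infimum over solutions of the full characteristic system \eqref{eq:ode}; your curve, with $p\equiv 0$ on the frozen piece, is not the projection of a characteristic (one would need $\dot p=-h_x(x_1,0)-u\,\lambda'(x_1)=0$, which fails in general), so Minimality(1) says nothing about it. The tool that does accept arbitrary Lipschitz competitors is the implicit formula \eqref{eq:Implicit variational}, but it only yields the \emph{implicit} inequality
\[
h^c_{x_1,u_0}(\gamma(s),s)\;\leqslant\; u_0+\int_0^s\Bigl[l^c(\gamma,\dot\gamma)-\lambda(\gamma(\tau))\,h^c_{x_1,u_0}(\gamma(\tau),\tau)\Bigr]\,d\tau ,
\]
in which the unknown value function reappears inside the integral. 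Passing from this to your clean ODE bound is a genuine comparison statement, and the naive Gronwall argument for the \emph{integral} inequality fails exactly where you need it: near $x_1$ one has $\lambda>0$, so $L^c$ is \emph{decreasing} in $u$, and an integral inequality $w(s)\leqslant u_0+\int_0^s(g-w)$ does not force $w\leqslant u$ (simple piecewise-constant counterexamples exist). The comparison is true, but it requires first upgrading to a pointwise differential inequality $D^+_s\,h^c_{x_1,u_0}(\gamma(s),s)\leqslant L^c(\gamma,\dot\gamma,h^c)$ via the Markov property and a short-time expansion, and then the Lipschitz ODE comparison theorem — none of which is among the quoted propositions and none of which you carry out.

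The paper sidesteps precisely this difficulty: it takes the constant curve, observes that the implicit inequality becomes an honest differential inequality for the antiderivative $G(t)=\int_0^t h^c_{x_1,c+e_0}(x_1,\tau)\,d\tau$, integrates it to get $G(t)\leqslant D+t\,l^c(x_1,0)$, extracts a sequence $t_n\to\infty$ along which $h^c_{x_1,c+e_0}(x_1,t_n)$ is bounded, propagates to all of $M$ by the Markov property and monotonicity in $u_0$, and finally fills in all times using the $t$-monotonicity of $T^{c,-}_tv$ from Proposition \ref{mono} (which you never invoke, forcing you into the heavier equi-boundedness detour for $t\in(0,1]$ and for the dependence on $v$; note also that the one-sided bound $v(x_1)\leqslant c+e_0$ follows directly from the subsolution inequality at $x_1$, no uniform Lipschitz estimate needed). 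To repair your write-up, either prove the along-the-curve comparison lemma properly, or replace it with the paper's $G$-trick.
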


\begin{proof}
We shall focus on the first inequality since the argument for the second one is completely similar.  Since $v$ is a subsolution to \eqref{HJs}, then for any $p\in D^* v(x_1)$, here $D^{\ast}v(x)$ denotes the reachable gradients of $v$ at $x$,
\[
H(x_1,p,v(x_1))=h(x_1,p)+ v(x_1)\leqslant  c,
\]
which is equivalent to
\[
v(x_1) \leqslant  c-\min_{(x,p)\in T^*M}  h(x,p)\leqslant  c+e_0.
\]

By \eqref{eq:Tt-+ rep} and (2) of Proposition \ref{Minimality}, for any $t>0$
\[
T_t^{c,-}v(x)=\inf_{y\in M}h^c_{y,v(y)}(x,t) \leqslant  h^c_{x_1,c+e_0}(x,t).
\]
On the other hand, by \eqref{eq:Implicit variational} and choosing $\gamma_0(\tau)\equiv x_1$ with $\tau\in [0,t]$.
\begin{align*}
h^c_{x_1,c+e_0}(x_1,t)=&\,(c+e_0)+\inf_{\substack{\gamma(t)=x_1\\ \gamma(0)=x_1} }\int_0^t \bigg[l^c(\gamma(\tau),\dot \gamma(\tau))-\lambda(\gamma(\tau))h^c_{x_1,c+e_0}(\gamma(\tau),\tau))\bigg]\ d\tau\\
\leqslant  &\, (c+e_0)+\int_0^t \bigg[l^c(\gamma_0(\tau),\dot \gamma_0(\tau))-\lambda(\gamma_0(\tau))h^c_{x_1,c+e_0}(\gamma_0(\tau),\tau))\bigg]\ d\tau\\
= &\, (c+e_0)+\int_0^t \bigg[l^c(x_1,0) -\lambda(x_1)\cdot h^c_{x_1,c+e_0}(x_1,\tau)\bigg] \ d \tau \\
= &\, (c+e_0)+t\cdot l^c(x_1,0)-\int_0^t h^c_{x_1,c+e_0}(x_1,\tau) \ d \tau.
\end{align*}
We define for $t\geqslant0$,
\[
G(t):=\int_0^t  h^c_{x_1,c+e_0}(x_1,\tau) \ d\tau,\quad G(0)=0.
\]
Then by the above discussion,
\[
\frac{d G(t)}{dt}+G(t) \leqslant  (c+e_0)+t\cdot l^c(x_1,0),
\]
which implies that
\begin{align*}
G(t) \leqslant &\, e^{-t} \int_0^t e^s\Big(c+e_0+s\cdot l^c(x_1,0)\Big)\ ds\\
=&\, \Big(c+e_0-l^c(x_1,0)\Big)\cdot\Big(1-e^{-t}\Big) + t\cdot l^c(x_1,0)\leqslant  D+t\cdot l^c(x_1,0),
\end{align*}
where $D(H):=|e_0-l(x_1,0)|\geqslant 0$. Hence
\[
\int_0^t h^c_{x_1,c+e_0}(x_1,\tau) \ d \tau \leqslant  D+t\cdot l^c(x_1,0).
\]
Therefore, there exists a sequence of positive numbers $\{t_n\}_{n\in\mathbb{N}}$ with $t_n\to +\infty $ and
\[
h^c_{x_1,c+e_0}(x_1,t_n)\leqslant  l^c(x_1,0)+1,
\]
thus by (2) and (3) of Proposition \ref{Minimality}, for any $x\in M$,
\begin{align*}
&\,h^c_{x_1,c+e_0}(x,t_n+1)=\inf_{y\in M}h^c_{y,h^c_{x_1,c+e_0}(y,t_n)}(x,1)\\
\leqslant  &\, h^c_{x_1,h^c_{x_1,c+e_0}(x_1,t_n)}(x,1)
\leqslant  h^c_{x_1,l^c(x_1,0)+1}(x,1)\leqslant  B_0,
\end{align*}
where $B_0:=\max_{x\in M}h^c_{x_1,l^c(x_1,0)+1}(x,1)$ only depends on $H$ and $c$. For any $n\in\mathbb{N}$,
\[
T_{t_n+1}^{c,-}v(x)\leqslant  h^c_{x_1,c+e_0}(x,t_n+1)\leqslant  B_0.
\]
By Proposition \ref{mono}, $T_{t}^{c,-}v(x)$ is increasing in $t$, thus $T_{t}^{c,-}v(x)$ is uniformly bounded from above by $B_0$.
\end{proof}

Combining the above lemma and Proposition \ref{mono}, we obtain
\begin{theorem}\label{exist-sol}
For $c\geqslant c(H)$, both of the limits
\[
u_-^c=\lim_{t\to +\infty} T_t^{c,-}v_0(x), \quad u_+^c=\lim_{t\to +\infty} T_{t}^{c,+}v_0(x)
\]
exists. In particular,\,\,$\mathcal{S}^c_-$ and $\mathcal{S}^c_+$ are both non-empty when $c\geqslant c(H)$.
\end{theorem}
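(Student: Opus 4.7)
The plan is to combine three ingredients already established in the paper: the Lipschitz subsolution $v_0$ produced by Theorem \ref{exist-sub}, the monotone behavior of $t\mapsto T^{c,\pm}_t v_0$ on subsolutions from Proposition \ref{mono}, and the uniform bounds of Lemma \ref{back-bd}. Together these give pointwise monotone convergence; a short equi-Lipschitz estimate then upgrades it to uniform convergence in $C(M)$, which is what is needed to conclude that the limits are genuine fixed points of the semigroups.

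First I would fix a Lipschitz subsolution $v_0$ of \eqref{HJs} at level $c\geqslant c(H)$: for $c=c(H)$ this is Theorem \ref{exist-sub}, while for $c>c(H)$ the same $v_0$ remains a subsolution at the higher level. Proposition \ref{mono} applied to $v_0$ yields
\[
v_0(x)\leqslant T^{c,-}_t v_0(x)\quad\text{and}\quad v_0(x)\geqslant T^{c,+}_t v_0(x),\qquad x\in M,\ t\geqslant 0.
\]
Since $T^{c,\pm}_t$ are order-preserving in the initial datum (immediate from the $\inf/\sup$ representations \eqref{eq:Tt-+ rep}), composing with $T^{c,\pm}_s$ and using the semigroup property shows that $t\mapsto T^{c,-}_t v_0(x)$ is non-decreasing, while $t\mapsto T^{c,+}_t v_0(x)$ is non-increasing. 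Lemma \ref{back-bd} provides, for each sign, the missing one-sided uniform bound, so both monotone orbits are bounded and the pointwise limits $u^c_\pm(x)$ exist for every $x\in M$.

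The remaining step is to pass from pointwise to uniform convergence in $C(M)$, for otherwise the limits could fail to be fixed points of $T^{c,\pm}_s$. I would show equi-Lipschitz continuity of the orbits $\{T^{c,\pm}_t v_0\}_{t\geqslant 1}$. Using the Markov property (3) of Proposition \ref{Minimality},
\[
T^{c,-}_t v_0(x)=\inf_{y\in M}h^c_{y,\,T^{c,-}_{t-1}v_0(y)}(x,1),
\]
where, by the monotonicity and Lemma \ref{back-bd}, the values $T^{c,-}_{t-1}v_0(y)$ lie in the compact interval $[\min_{M} v_0,\,B_0]$ uniformly in $t\geqslant 1$. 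The local Lipschitz continuity (4) of Proposition \ref{Minimality} then gives a Lipschitz constant for $x\mapsto T^{c,-}_t v_0(x)$ independent of $t\geqslant 1$. Arzelà--Ascoli upgrades the monotone pointwise convergence to uniform convergence on $M$, so $u^c_-\in C(M)$. Passing to the limit in $T^{c,-}_s\bigl(T^{c,-}_t v_0\bigr)=T^{c,-}_{t+s}v_0$ as $t\to\infty$, using the continuity of $T^{c,-}_s$ on $C(M)$ (again a consequence of (4)), gives $T^{c,-}_s u^c_-=u^c_-$ and hence $u^c_-\in\mathcal{S}^c_-$. The argument for $u^c_+\in\mathcal{S}^c_+$ is strictly parallel, using the lower-bound side of Lemma \ref{back-bd}.

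The one point where I expect care is this upgrade from pointwise to uniform convergence, since only continuity of the limit secures the fixed-point identity. The Markov decomposition above is the natural device: it trades equicontinuity of a whole orbit for a single one-step propagation $h^c_{y,u_0}(x,1)$ with uniformly bounded data $u_0$, a situation directly controlled by the regularity of the action functions recorded in Proposition \ref{Minimality} and ultimately inherited from the variational principle of \cite{WWY1}--\cite{WWY3}.
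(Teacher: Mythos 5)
Your proposal is correct and follows essentially the same route as the paper: the paper also combines the subsolution $v_0$ from Theorem \ref{exist-sub}, the $t$-monotonicity from Proposition \ref{mono}, and the one-sided bounds of Lemma \ref{back-bd} to get monotone bounded orbits whose limits are fixed points. The only difference is that you explicitly justify the upgrade from pointwise to uniform convergence via the Markov property and the Lipschitz regularity of the action functions, a step the paper leaves implicit in its remark preceding Definition \ref{weak kam}; this is a welcome, not a divergent, addition.
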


In view of Lemma \ref{uni-Lip}, it is not surprising that $\mathcal{S}^c_-$ is bounded as a subset of Lipschitz functions on $M$. Before the proof of this conclusion, we state the
\begin{lemma}\label{3-3}
For each $t\geqslant  0$ and $x\in M$,
\[
T_t^{c,-}u^c_+(x)\geqslant  u^c_+(x),\quad T_t^{c,+}u^c_-(x)\leqslant u^c_-(x).
\]
\end{lemma}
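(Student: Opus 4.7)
The plan is to derive both inequalities as essentially immediate consequences of Proposition \ref{mono}, which asserts the equivalence between being a subsolution of \eqref{HJs} and the monotonicity of the orbits under either of the two semigroups $\{T^{c,\pm}_t\}_{t\geqslant 0}$. The bridge is the observation that a fixed point of one semigroup automatically satisfies the one-sided inequality required by Proposition \ref{mono}, and hence is a viscosity subsolution of \eqref{HJs}; this makes it also obey the monotonicity inequality for the \emph{other} semigroup.

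More concretely, I would argue as follows. For the second inequality, start from $u^c_-\in\mathcal{S}^c_-$, so $T^{c,-}_t u^c_- = u^c_-$ for all $t\geqslant 0$. In particular, $u^c_-\leqslant T^{c,-}_t u^c_-$, which by the ``only if'' direction of Proposition \ref{mono} shows that $u^c_-$ is a (continuous) subsolution of \eqref{HJs}. Applying the ``if'' direction of the same proposition to the forward semigroup then yields $u^c_-\geqslant T^{c,+}_t u^c_-$ for every $t\geqslant 0$, which is the claim. For the first inequality, the same scheme is used with the roles reversed: $u^c_+\in\mathcal{S}^c_+$ gives $u^c_+\geqslant T^{c,+}_t u^c_+$, hence $u^c_+$ is a subsolution of \eqref{HJs} (by Proposition \ref{mono} applied to the forward semigroup), and therefore $T^{c,-}_t u^c_+\geqslant u^c_+$ (by Proposition \ref{mono} applied to the backward semigroup). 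Note that this step uses $c\geqslant c(H)$, which is built into the hypotheses of Proposition \ref{mono} and which we already know from Theorem \ref{exist-sol} to guarantee that both $u^c_{\pm}$ are well-defined.

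There is no real obstacle here; the content of the lemma is completely contained in Proposition \ref{mono} once one notices the self-duality between ``fixed point of one semigroup'' and ``subsolution'' and ``monotone orbit under the other semigroup.'' The only point that must be checked carefully when writing out the argument is that Proposition \ref{mono} is indeed formulated as a two-way equivalence between the subsolution property and each of the two monotonicity conditions (not merely one implication), so that one can feed the fixed-point identity into the forward version of the equivalence and extract the backward inequality, and vice versa.
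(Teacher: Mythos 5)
Your proof of the second inequality is exactly the paper's: $u^c_-$ is a fixed point of $T^{c,-}_t$, hence a subsolution by Proposition \ref{mono}, hence $T^{c,+}_t u^c_-\leqslant u^c_-$. For the first inequality, however, you take a genuinely different (and shorter) route. You close the loop entirely inside Proposition \ref{mono}: fixed point of $T^{c,+}_t$ $\Rightarrow$ $u^c_+\geqslant T^{c,+}_t u^c_+$ $\Rightarrow$ subsolution $\Rightarrow$ $u^c_+\leqslant T^{c,-}_t u^c_+$. The paper instead proves $h^c_{y,u^c_+(y)}(x,t)\geqslant u^c_+(x)$ for every $y$ directly, using the duality $h^c_{x_0,u_0}(x,t)=u\Leftrightarrow h_c^{x,u}(x_0,t)=u_0$ from Proposition \ref{Minimality}\,(1), the fixed-point identity $u^c_+=T^{c,+}_t u^c_+=\sup_z h_c^{z,u^c_+(z)}(\cdot,t)$, and the monotonicity in Proposition \ref{Minimality}\,(2); it only afterwards (in Proposition \ref{principal}) feeds the resulting inequality into Proposition \ref{mono} to conclude that $u^c_+$ is a subsolution. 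Your argument is valid provided Proposition \ref{mono} is read — as you correctly flag, and as the paper's own use of it in Theorem \ref{multi} confirms — as asserting that \emph{each} of the two monotonicity conditions is separately equivalent to the subsolution property; under the literal reading of the ``or'' in its statement, the implication ``subsolution $\Rightarrow v\leqslant T^{c,-}_t v$'' would not be extractable, which is presumably why the authors preferred an argument resting only on the quantitative properties of the action functions rather than on the exact logical form of the quoted equivalence. What your approach buys is brevity and symmetry; what the paper's buys is independence from that formulation and an explicit mechanism (the forward/backward duality) that reappears in the proof of Proposition \ref{mane}.
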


\begin{proof}
We shall focus one the first inequality, the second one is due to Proposition \ref{mono} and the fact that $u^c_-$ is a subsolution to \eqref{HJs}. It is clear that $T_0^{c,-} u^c_+= u^c_+$. For $t>0$,  we have
\[
T^{c,-}_tu^c_+(x)=\inf_{y\in M}h^c_{y,u^c_+(y)}(x,t),\quad \forall x\in M.
\]
Thus, in order to prove $T_t^{c,-} u^c_+\geqslant   u^c_+$ everywhere, it is sufficient to show that for each $y\in M$,
\begin{equation}\label{eq:7}
h^c_{y, u^c_+(y)}(x,t)\geqslant   u^c_+(x)\quad \text{for all}\quad (x,t)\in M\times (0,+\infty).
\end{equation}
Given any $(x,t)\in M\times (0,+\infty)$, for $y\in M$, set $v(y):=h^c_{y, u^c_+(y)}(x,t)$, then Proposition \ref{Minimality}, (1) gives
\[
u^c_+(y)=h_c^{x,v(y)}(y,t).
\]
Since $u^c_+$ is fixed point of $T_t^{c,+}$, it follows that
\[
h_c^{x,v(y)}(y,t)=u^c_+(y)=T_t^{c,+} u^c_+(y)=\sup_{z\in M}h_c^{z, u^c_+(z)}(y,t)\geqslant  h_c^{x,u^c_+(x)}(y,t).
\]
Proposition \ref{Minimality}, (2) implies
\[
v(y)\geqslant   u^c_+(x)\quad\text{ for all }\quad y\in M,
\]
which is equivalent to \eqref{eq:7}.
\end{proof}

Now we show the second conclusion of our main result, namely
\begin{theorem}\label{Lip-bound}
Assume $c \geqslant  c(H)$, there is $B(H,c)>0$ such that any $u\in \mathcal{S}^c_-$ satisfying
\[
\|u\|_{W^{1,\infty}}\leqslant  B.
\]
\end{theorem}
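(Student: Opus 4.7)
The plan is to adapt the proof of Lemma \ref{uni-Lip}---which was stated only for $C^1$ subsolutions at the single value $c=c(H)+1$---to arbitrary $c\geqslant c(H)$ and to the merely Lipschitz setting in which any $u\in\mathcal{S}^c_-$ naturally lives. The first thing I would verify is that every $u\in\mathcal{S}^c_-$ is in fact Lipschitz. Since a viscosity solution is in particular a continuous subsolution and $M$ is compact, $\|u\|_\infty<\infty$; then combining any $p\in D^+u(x)$ with the subsolution inequality and (H2) yields a pointwise bound on $D^+u$ depending only on $\|u\|_\infty$ and $c$, which forces Lipschitz regularity exactly as at the start of the proof of Theorem 2.2. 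Writing $L:=\|u\|_{\mathrm{Lip}}$ and invoking Rademacher's theorem, one has
\[
h(x,d_xu(x))+\lambda(x)u(x)\leqslant c
\]
at a.e.\ $x\in M$.

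The next step is to transfer this almost-everywhere inequality to the distinguished points $x_1,x_2$ of (H3). I would choose sequences of differentiability points $y^{(i)}_n\to x_i$ ($i=1,2$); using \eqref{e_0} to bound $h(y^{(i)}_n,du(y^{(i)}_n))\geqslant -e_0$ and passing to the limit via continuity of $\lambda u$ would yield $u(x_1)\leqslant c+e_0$ and $u(x_2)\geqslant -(c+e_0)$. Combined with $\operatorname{diam}(M)=1$, this produces the sup-norm estimate
\[
\|u\|_\infty\leqslant |c+e_0|+L.
\]

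To close the loop, for arbitrary $\varepsilon>0$ I would pick a differentiability point $\bar x$ with $|d_{\bar x}u|_{\bar x}\geqslant L-\varepsilon$ (available from the definition of essential supremum) and apply the subsolution inequality at $\bar x$ together with (H2) for $K=1+\lambda_0$:
\[
(1+\lambda_0)(L-\varepsilon)-C^{\ast}(1+\lambda_0)-\lambda_0\|u\|_\infty\leqslant c.
\]
Substituting the previous sup-norm bound, sending $\varepsilon\to 0$, and solving for $L$ give an estimate $L\leqslant B_1(H,c)$ depending only on $H$ and $c$; feeding this back in turn yields $\|u\|_\infty\leqslant B_2(H,c)$, so $B:=B_1+B_2$ does the job.

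The only non-routine ingredient---and thus the main potential obstacle---is the almost-everywhere-to-pointwise passage at the specific points $x_1,x_2$, which is handled by continuity of $u$ and $\lambda$. In particular, none of the semigroup machinery of Section 2.2 is actually needed for this estimate: the bound is a purely PDE-theoretic consequence of (H1)-(H3) together with the subsolution inequality built into viscosity solutions.
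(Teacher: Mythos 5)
Your argument is correct, but it takes a genuinely different route from the paper. The paper's proof of Theorem \ref{Lip-bound} stays entirely inside the semigroup framework: the upper bound $u\leqslant B_0(H,c)$ comes from Lemma \ref{back-bd} applied to the fixed point $u=T^{c,-}_tu$, and the lower bound is obtained by a duality trick --- since $-u$ is a fixed point of the forward semigroup associated with $\breve{H}(x,p,u)=H(x,-p,-u)$, which again satisfies (H1)--(H3), the same estimate applied to $\breve H$ gives $-u\leqslant B_0(\breve H,c)$; only then is the gradient bound deduced from the $C^0$ bound via superlinearity. You instead extend the a priori estimate of Lemma \ref{uni-Lip} from $C^1$ subsolutions at $c=c(H)+1$ to Lipschitz viscosity subsolutions at arbitrary $c\geqslant c(H)$: Rademacher's theorem plus the fact that $d_xu\in D^+u(x)$ at differentiability points lets you run the same chain of inequalities almost everywhere, and approaching $x_1,x_2$ by differentiability points (the one genuinely non-routine step, which you correctly identify and which works by continuity of $\lambda u$) recovers the anchor bounds $u(x_1)\leqslant c+e_0$, $u(x_2)\geqslant-(c+e_0)$; the bootstrap $L\leqslant c+C^{\ast}(1+\lambda_0)+\lambda_0|c+e_0|$ then closes as in \eqref{eq:2}. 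Your route is more elementary and more general --- it bounds \emph{all} Lipschitz subsolutions, not just elements of $\mathcal{S}^c_-$, and needs none of the action-function machinery --- whereas the paper's route reuses Lemmas \ref{back-bd} and \ref{3-3} (already needed for the existence argument) and yields the bound on $\mathcal{S}^c_+$ as a byproduct via \eqref{eq:thm21-2}, which is then used in the analysis of pairs $(u^c_-,u^c_+)$. Both proofs are complete; yours could equally well replace the paper's.
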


\begin{proof}
By the superlinearity of $h$, it is enough to show that for any $u\in\mathcal{S}^c_-, \|u\|_{\infty}$ is bounded above by some constant only depending on $H$ and $c$. By Lemma \ref{back-bd}, we obtain that
\begin{equation}\label{eq:thm21-1}
u(x)\leqslant  B_0(H,c), \quad \text{for all}\,\, x\in M,\,\, u\in \mathcal{S}^c_-;\\
\end{equation}
By Lemma \ref{3-3}, any $v\in\mathcal{S}^c_+$ is bounded above by some $u\in \mathcal{S}^c_-$, thus
\begin{equation}\label{eq:thm21-2}
v(x)\leqslant  B_0(H,c), \quad \text{for all}\,\, x\in M,\,\, v\in \mathcal{S}^c_+;\\
\end{equation}
For the other side, we notice that, by Definition \ref{weak kam}, for any $u\in \mathcal{S}^c_-, -u$ is a fixed point of $\{\breve T_t^{c,+}\}_{t\geqslant 0}$ , where $\breve T_t^{c,+}$ is the \textit{forward solution semigroup} to \eqref{HJe}, where $H$ is replaced by $\breve{H}$. Since $\breve{H}$ satisfies our standing assumption (H1)-(H3), we invoke the inequality \eqref{eq:thm21-2} to obtain
\[
-u(x)\leqslant  B_0(\breve{H},c),\quad \text{for any}\quad x\in M.
\]
Combining the above with \eqref{eq:thm21-1}, we conclude that
\[
-B_0(\breve{H},c)\leqslant  u(x)\leqslant  B_0(H,c), \quad \forall x\in M, u\in \mathcal{S}^c_-.
\]
\end{proof}

The following two propositions are crucial in our construction of multiple solutions.
\begin{proposition}\label{principal}
For any $c\geqslant c(H)$ and $u^c_+\in\mathcal{S}^c_+$ (resp. $u^c_-\in\mathcal{S}^c_-$) to the \eqref{HJs}, the limit
\[
\lim_{t\rightarrow\infty}T^{c,-}_t u^c_+\quad\text{(resp.}\,\,\lim_{t\rightarrow\infty}T^{c,+}_t u^c_-)
\]
exists and then belongs to $\mathcal{S}^c_-$ (resp. $\mathcal{S}^c_+$).
\end{proposition}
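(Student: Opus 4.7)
The plan is to show that the orbit $t\mapsto T^{c,-}_t u^c_+$ is monotone non-decreasing in $t$ and uniformly bounded above, deduce pointwise convergence, upgrade this to uniform convergence via an equi-Lipschitz estimate, and finally pass the semigroup identity to the limit to see that the limit is a fixed point of every $T^{c,-}_s$, hence an element of $\mathcal{S}^c_-$. The argument for $\lim_t T^{c,+}_t u^c_-$ is entirely symmetric: monotone non-increasing and bounded below.

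First I would invoke Lemma \ref{3-3} to obtain $T^{c,-}_t u^c_+\geqslant u^c_+$ for every $t\geqslant 0$. Proposition \ref{mono} then identifies $u^c_+$ as a continuous subsolution of \eqref{HJs}, so Lemma \ref{back-bd} provides the uniform upper bound $T^{c,-}_t u^c_+(x)\leqslant B_0(H,c)$ on $M\times(0,+\infty)$. The inf-representation \eqref{eq:Tt-+ rep} combined with the monotonicity of $h^c_{y,u_0}$ in $u_0$ from Proposition \ref{Minimality}, (2) shows that $T^{c,-}_s$ is order-preserving on $C(M)$. Applying $T^{c,-}_s$ to the inequality $T^{c,-}_t u^c_+\geqslant u^c_+$ and invoking the semigroup property yields
\[
T^{c,-}_{s+t}u^c_+=T^{c,-}_s\bigl(T^{c,-}_tu^c_+\bigr)\geqslant T^{c,-}_s u^c_+,
\]
so the orbit is non-decreasing in $t$. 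Combined with the upper bound, this forces the pointwise limit $u^c_{+-}(x):=\lim_{t\to+\infty}T^{c,-}_t u^c_+(x)$ to exist for every $x\in M$.

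To upgrade pointwise to uniform convergence, I would establish an equi-Lipschitz estimate for the family $\{T^{c,-}_{t+1}u^c_+\}_{t\geqslant 0}$. Writing
\[
T^{c,-}_{t+1}u^c_+(x)=\inf_{y\in M}h^c_{y,T^{c,-}_tu^c_+(y)}(x,1),
\]
the uniform boundedness of $T^{c,-}_t u^c_+(y)$ in $(y,t)$ together with the local Lipschitz character of $(x,u_0)\mapsto h^c_{y,u_0}(x,1)$ from Proposition \ref{Minimality}, (4) supplies a common Lipschitz constant in $x$ for the whole family. A monotone family of equi-continuous functions on the compact space $M$ that converges pointwise converges uniformly (Dini), and the limit $u^c_{+-}$ thereby inherits Lipschitz regularity.

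Finally, to verify that $u^c_{+-}\in\mathcal{S}^c_-$, I would combine the continuity of each $T^{c,-}_s$ on $C(M)$ (again a consequence of Proposition \ref{Minimality}, (4)) with the uniform convergence just obtained to pass to the limit in the semigroup identity:
\[
T^{c,-}_s u^c_{+-}=T^{c,-}_s\Bigl(\lim_{t\to\infty}T^{c,-}_t u^c_+\Bigr)=\lim_{t\to\infty}T^{c,-}_{s+t}u^c_+=u^c_{+-},\qquad s\geqslant 0,
\]
so Definition \ref{weak kam} yields $u^c_{+-}\in\mathcal{S}^c_-$. I expect the main technical step to be the equi-Lipschitz estimate in the penultimate paragraph, but it is essentially a packaged form of the Lax--Oleinik-type smoothing inherent in Proposition \ref{Minimality}, (4). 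The forward counterpart follows by the same scheme: Lemma \ref{3-3} yields $T^{c,+}_t u^c_-\leqslant u^c_-$, Lemma \ref{back-bd} gives a uniform lower bound (since $u^c_-$ is a subsolution to \eqref{HJs}), and the monotone-plus-equi-Lipschitz argument produces a fixed point in $\mathcal{S}^c_+$.
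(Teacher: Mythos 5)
Your proposal is correct and follows the same route as the paper: Lemma \ref{3-3} plus Proposition \ref{mono} identify $u^c_+$ as a subsolution, Lemma \ref{back-bd} gives the uniform upper bound, and monotonicity plus boundedness of the orbit yields convergence to a fixed point. The paper's proof is terser, delegating the uniform-convergence and fixed-point steps to the remark following Proposition \ref{mono}, whereas you spell out the equi-Lipschitz estimate and the passage to the limit in the semigroup identity; this is a matter of detail, not of substance.
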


\begin{proof}
We shall show the first limit exists, the other part is completely similar. Applying Proposition \ref{mono} and Lemma \ref{3-3}, $u^c_+$ is a subsolution to \eqref{HJs}. Then Lemma \ref{back-bd} implies that, for each $t\geqslant 0$,
\[
u^c_+(x)\leqslant  T_t^{c,-}u^c_+(x) \leqslant  B_0(H,c).
\]
Thus $T_t^{c,-}u^c_+(x) $ is increasing and uniformly bounded in $t$ so that $\lim_{t\rightarrow\infty}T^{c,-}_t u^c_+$ exists and has to be a fixed point of $\{T^{c,-}_t\}_{t\geq0}$.
\end{proof}

\vspace{1em}
The following proposition appeared first in the work \cite{WWY3} in a more complete form, which dealt with Hamiltonian that are strictly increasing in $u$. However, its proof does not depend on the $u$-monotonicity. For the readers convenience, we present a simple proof here.
\begin{proposition}\cite[Theorem 1.1, 1.2]{WWY3}\label{mane}
Assume $(u^{c}_-,u^{c}_+)\in\mathcal{S}^c_-\times\mathcal{S}^c_+$ satisfies
\begin{equation}\label{pair}
u^{c}_-=\lim_{t\rightarrow\infty}T^{c,-}_t u^{c}_+,
\end{equation}
then they must coincide on some point on $M$, i.e.,
\[
\mathcal{I}(u_-^{c},u_+^{c}):=\{x\in M:\,\,u_-^{c}(x)=u_+^{c}(x)\}\neq\emptyset.
\]
\end{proposition}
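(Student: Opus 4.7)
The plan is to exploit the duality $h^c_{x_0,u_0}(x,t)=u\Longleftrightarrow h_c^{x,u}(x_0,t)=u_0$ from Proposition \ref{Minimality}(1), together with the fact that $u^c_+$ is a fixed point of the forward semigroup, in order to locate a sequence of points $z_n\in M$ at which $T^{c,-}_n u^c_+(z_n)=u^c_+(z_n)$. Any subsequential limit $z_\infty$ of such a sequence will then lie in $\mathcal{I}(u^c_-,u^c_+)$ once the hypothesis $u^c_-=\lim T^{c,-}_t u^c_+$ is upgraded to uniform convergence on the compact manifold $M$.

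Concretely, I would fix an arbitrary $x^{\ast}\in M$ and, for each $n\in\mathbb{N}$, use compactness of $M$ together with the joint continuity of the backward action (Proposition \ref{Minimality}(4)) to select $y_n\in M$ attaining the infimum
\[
u_n:=T^{c,-}_n u^c_+(x^{\ast})=\inf_{y\in M}h^c_{y,u^c_+(y)}(x^{\ast},n)=h^c_{y_n,u^c_+(y_n)}(x^{\ast},n).
\]
The duality of Proposition \ref{Minimality}(1) translates this identity into $h_c^{x^{\ast},u_n}(y_n,n)=u^c_+(y_n)$. Because $u^c_+$ is a fixed point of $\{T^{c,+}_t\}_{t\geq 0}$, the supremum
\[
u^c_+(y_n)=T^{c,+}_n u^c_+(y_n)=\sup_{z\in M}h_c^{z,u^c_+(z)}(y_n,n)
\]
is similarly attained at some $z_n\in M$, and dualizing once more yields $h^c_{y_n,u^c_+(y_n)}(z_n,n)=u^c_+(z_n)$.

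With $z_n$ in hand, the pinching step is immediate. The definition of $T^{c,-}_n u^c_+$ as an infimum delivers the upper bound
\[
T^{c,-}_n u^c_+(z_n)\leq h^c_{y_n,u^c_+(y_n)}(z_n,n)=u^c_+(z_n),
\]
while Lemma \ref{3-3} supplies the matching lower bound $T^{c,-}_n u^c_+(z_n)\geq u^c_+(z_n)$, so equality holds for every $n$. I would then extract a subsequence with $z_n\to z_\infty\in M$ and invoke Dini's theorem to upgrade the stipulated pointwise convergence $T^{c,-}_n u^c_+\to u^c_-$ to a uniform one: the family $\{T^{c,-}_t u^c_+\}_{t\geq 0}$ is continuous and monotone increasing in $t$ (from Lemma \ref{3-3} and the order-preservation of $T^{c,-}_t$ built into Proposition \ref{Minimality}(2)), with continuous limit $u^c_-\in\mathcal{S}^c_-$. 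Uniform convergence lets me pass to the limit in $T^{c,-}_n u^c_+(z_n)=u^c_+(z_n)$ to conclude $u^c_-(z_\infty)=u^c_+(z_\infty)$.

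The only delicate point, and the one I would double-check most carefully, is the attainment of the extrema defining $T^{c,\pm}_n$ at fixed $(x,t)$; this amounts to continuity of $y\mapsto h^c_{y,u^c_+(y)}(x,n)$ and $z\mapsto h_c^{z,u^c_+(z)}(y,n)$ on compact $M$, which follows from Proposition \ref{Minimality}(4) together with continuity of $u^c_+$. Everything else is bookkeeping: the duality identity applied twice, the semigroup order-monotonicity, and Lemma \ref{3-3}.
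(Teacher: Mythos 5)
Your argument is correct, but it takes a genuinely different route from the paper. The paper argues by contradiction: assuming $\mathcal{I}(u^c_-,u^c_+)=\emptyset$, it extracts a uniform gap $\delta>0$, derives the composite inequality $T^{c,+}_t\circ T^{c,-}_t u^c_+\leqslant u^c_+$ from the same duality $h_c^{y,h^c_{x,u}(y,t)}(x,t)=u$ that you use, and then invokes the \emph{strict} monotonicity of Proposition \ref{Minimality}(2) to contradict the fixed-point property of $u^c_+$. You instead give a direct construction: the fixed-point identity $u^c_+(y)=\sup_z h_c^{z,u^c_+(z)}(y,n)$, attainment of the supremum at some $z_n$, and one application of the duality produce points with $T^{c,-}_n u^c_+(z_n)=u^c_+(z_n)$, pinched between the infimum bound and Lemma \ref{3-3}; compactness and Dini then yield a genuine coincidence point in the limit. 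Your version trades strict monotonicity for attainment of extrema (which does follow from Proposition \ref{Minimality}(4) and continuity of $u^c_+$) plus the Dini upgrade of \eqref{pair} to uniform convergence (legitimate, since $t\mapsto T^{c,-}_t u^c_+$ is nondecreasing by Lemma \ref{3-3}, the semigroup property and order-preservation, and the limit $u^c_-$ is continuous), and it is slightly more informative in that it exhibits elements of $\mathcal{I}(u^c_-,u^c_+)$ as limits of points where the finite-time evolution already returns to $u^c_+$. One cosmetic remark: the opening step selecting $x^{\ast}$ and the minimizer $y_n$ is superfluous --- your chain from the fixed-point property of $u^c_+$ at $y_n$ onward works verbatim with $y_n$ replaced by an arbitrary fixed point of $M$, and the identity $h_c^{x^{\ast},u_n}(y_n,n)=u^c_+(y_n)$ is never used afterwards.
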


\begin{proof}
We argue by contradiction and assume $ \mathcal{I}(u_-^{c},u_+^{c})=\emptyset$. Then applying Lemma \ref{3-3},
\[
u_-^{c}(x)>u_+^{c}(x)\quad\text{for any}\,\,x\in M,
\]
and $\delta:=\min_{x\in M }\{u_-^{c}(x)-u_+^{c}(x)\}>0$. By the definition \eqref{pair}, there is $t_0>0$ such that for $t\geqslant t_0$,
\begin{equation}\label{strict}
T_t^{c,-} u^{c}_+(x) \geqslant u^{c}_-(x)-\frac{\delta}{2}  > u^{c}_+(x).
\end{equation}
Notice that for any $x\in M,t>0$, the definition \eqref{eq:Tt-+ rep} gives
\[
T_t^{c,-} u^{c}_+(y)\leqslant h^c_{x,u^{c}_+(x)}(y,t).
\]
Thus Proposition \ref{Minimality} implies that
\[
T_t^{c,+} \circ T_t^{c,-} u^{c}_+(x)= \sup_{y\in M} h_c^{y, T_t^{c,-} u^{c}_+(y)} (x,t) \leqslant \sup_{y\in M}  h_c^{y, h^c_{x,u^{c}_+(x)}(y,t)}(x,t)=u^{c}_+ (x).
\]
Combining \eqref{strict} and monotonicity of the solution semigroup, we obtain for $t\geqslant t_0$,
\[
u^{c}_+(x) \geqslant T_t^{c,+} \circ T_t^{c,-} u^{c}_+(x)> T_t^{c,+}   u^{c}_+(x),
\]
This contradicts with the fact that $u^{c}_+ $ is a fixed point of $\{T_t^{c,+}\}_{t\geqslant 0}$.
\end{proof}

As a refined version of the existence result, the multiplicity of solutions to \eqref{HJs} are obtained in the noncritical case. This phenomenon shares similarity with the bifurcation arising in nonlinear dynamics, but has a global nature.
\begin{theorem}\label{multi}
$\mathcal{S}^c_-$ contains at least two elements when $c>c(H)$.
\end{theorem}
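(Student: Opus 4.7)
The plan is to produce two distinct elements of $\mathcal{S}^c_-$ by running the backward solution semigroup from two carefully chosen initial data — a strict subsolution of \eqref{HJs} and the forward fixed point built from that same subsolution — and then to separate the two limits by means of Proposition \ref{mane}. The gap $c-c(H)>0$ is exactly what promotes an ordinary critical subsolution to a strict subsolution at level $c$, and this strictness is the engine driving the non-coincidence of the two solutions.

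Concretely, I would start with the Lipschitz subsolution $v_0$ of \eqref{HJs} with $c=c(H)$ furnished by Theorem \ref{exist-sub}. Because
$$h(x,d_x v_0(x))+\lambda(x)v_0(x)\leqslant c(H)<c,$$
$v_0$ is a strict subsolution for the current parameter $c$. Theorem \ref{exist-sol} then provides
$$u^c_-:=\lim_{t\to\infty}T^{c,-}_t v_0\in\mathcal{S}^c_-,\qquad u^c_+:=\lim_{t\to\infty}T^{c,+}_t v_0\in\mathcal{S}^c_+,$$
and Proposition \ref{principal} produces a second backward fixed point $\tilde u^c_-:=\lim_{t\to\infty}T^{c,-}_t u^c_+\in\mathcal{S}^c_-$. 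The task then reduces to showing $\tilde u^c_-\neq u^c_-$.

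The separation step would proceed as follows. By the strict version of Proposition \ref{mono} applied to the strict subsolution $v_0$, for every $x\in M$ one has $T^{c,-}_1 v_0(x)>v_0(x)>T^{c,+}_1 v_0(x)$. Combining this with the monotonicity in $t$ of $T^{c,-}_t v_0$ (increasing) and of $T^{c,+}_t v_0$ (decreasing), the strict inequality survives the limit:
$$u^c_-(x)\geqslant T^{c,-}_1 v_0(x)>T^{c,+}_1 v_0(x)\geqslant u^c_+(x)\qquad\text{for every }x\in M.$$
The pair $(\tilde u^c_-,u^c_+)$ satisfies the hypothesis of Proposition \ref{mane}, so there exists $x_0\in M$ with $\tilde u^c_-(x_0)=u^c_+(x_0)$. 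But then $\tilde u^c_-(x_0)=u^c_+(x_0)<u^c_-(x_0)$, so $u^c_-$ and $\tilde u^c_-$ are two distinct elements of $\mathcal{S}^c_-$.

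The main delicate point I anticipate is preserving the strict inequality $u^c_->u^c_+$ through the limit $t\to\infty$, since Proposition \ref{mono} only supplies strict comparison for each fixed $t>0$ and an a priori limit argument could degenerate $>$ into $\geqslant$. The way around this is to sandwich the asymptotic values against the intermediate data $T^{c,\pm}_1 v_0$ rather than against $v_0$ itself, so that the $t$-monotonicity of the orbits does the work of propagating strictness. Once this is secured, the appeal to Proposition \ref{mane} is immediate and the bifurcation conclusion falls out.
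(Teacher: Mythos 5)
Your proof is correct and follows essentially the same route as the paper: take a strict subsolution at level $c$, form $u^c_-=\lim_t T^{c,-}_t v$ and $u^c_+=\lim_t T^{c,+}_t v$, separate them strictly via the strict version of Proposition \ref{mono}, and use Proposition \ref{mane} on the pair $\bigl(\lim_t T^{c,-}_t u^c_+,\,u^c_+\bigr)$ to produce a second, distinct element of $\mathcal{S}^c_-$. The only cosmetic differences are that you seed the semigroups with the Lipschitz critical subsolution $v_0$ (promoted to a strict subsolution since $c>c(H)$) rather than a smooth strict subsolution from the definition of $c(H)$, and you invoke Proposition \ref{mane} directly on $(\tilde u^c_-,u^c_+)$ instead of running the contradiction through the assumption $\tilde u^c_-\equiv u^c_-$; both variants are sound.
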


\begin{proof}
By the definition \eqref{dcv} of $c(H)$, there is a strict subsolution $v\in C^\infty(M)$ to \eqref{HJs}, i.e.,
\[
H(x,d_x v(x),v(x))<c.
\]
Then by Proposition \ref{mono}, for any $t>0$,
\begin{equation}\label{eq:3}
\begin{split}
T_t^{c,-}v(x)>v(x),\\
T_t^{c,+}v(x)<v(x).
\end{split}
\end{equation}
If we define
\begin{equation}\label{vis-def}
u_-^c:=\lim_{t\to +\infty}T_{t}^{c,-}   v(x)\in \mathcal{S}^c_-, \quad u_+^c:=\lim_{t\to +\infty}T_{t}^{c,+}  v(x)\in \mathcal{S}^c_+.
\end{equation}
and
\begin{equation}\label{ano-def}
\bar{u}_-^c:=\lim_{t\to +\infty}T_{t}^{c,-}u_+^c(x)\in \mathcal{S}^c_-.
\end{equation}
By \eqref{eq:3} and \eqref{vis-def}, for any $x\in M$,
\begin{equation}\label{eq:4}
u_+^c(x)<v(x)<u_-^c(x).
\end{equation}
Due to the definition \eqref{eq:Tt-+ rep} and (2) of Proposition \ref{Minimality}, the above inequality gives for any $t>0$ and $x\in M$,
\[
T_t^{c,-}u_+^c(x)<T_t^{c,-}u_-^c(x)=u_-^c(x),
\]
where the equality holds since elements in $\mathcal{S}^c_-$ are fixed points of   $\{T_t^{c,-}\}_{t\geqslant 0}$. By Proposition \ref{principal}, we send $t$ to infinity and use \eqref{ano-def} to find for any $x\in M$,
\[
\bar{u}_-^c(x)\leqslant  u_-^c(x).
\]
Notice that if $\bar{u}_-^c\equiv u_-^c$ on $M$, then by Proposition \ref{mane}, the set
\[
\mathcal{I}(u_-^c,u_+^c)=\{x\in M:\,\,u_-^c(x)=u_+^c(x)\}\neq\emptyset.
\]
This contradicts with \eqref{eq:4}. Thus $\bar{u}_-^c\leqslant u_-^c$ are different solutions to \eqref{HJs}.
\end{proof}

\section{An illustrating example and concluding remarks}\label{section3}
In this section, we want to illustrate our main theorem by a simple example from another aspect. Let $\mathbb{S}^1$ be the usual circle and $\mathbf{0}\in C(\mathbb{S}^1)$ the function vanishing everywhere on $\mathbb{S}^1$, we consider the following

\begin{example}
\begin{equation}\label{ham}
H(x,p,u)=|p|^2+\sin(x)u,\quad\quad (x,p,u)\in T^\ast\mathbb{S}^1\times\R.
\end{equation}
\end{example}

As a smooth function on $\mathbb{S}^1, \sin(x)$ attains its maximum $1$ and minimum $-1$. It follows that $H$ satisfies (H1)-(H3), thus falls into the class of Hamiltonian we consider in this paper. The Hamilton-Jacobi equation associated to the \eqref{ham} is
\begin{equation}\label{i-e}
|u^\prime(x)|^2+\sin(x)u=c,\quad x\in\mathbb{S}^1.
\end{equation}
From now on, we identify $\mathbb{S}^1$ with $[0,2\pi]$ and all functions are assumed to be $2\pi$-periodic. Notice that due to the facts
\begin{enumerate}[(1)]
  \item if $c=0$, then $\mathbf{0}$ is a solution to \eqref{i-e},
  \item if $c<0$, there is no smooth subsolution $\underline{\mathtt{u}}\in C^\infty(\mathbb{S}^1)$ to \eqref{i-e}. Since if $x=0,\pi$, then $\underline{\mathtt{u}}$ has to satisfy the impossible inequality
        \[
        |\underline{\mathtt{u}}^{\prime}(x)|^2\leqslant c<0,
        \]
\end{enumerate}
and the definition \eqref{dcv}, we have
\begin{proposition}\label{cri}
For the Hamiltonian   $H$ given by \eqref{ham},
\begin{equation}\label{e-cv}
c(H)=0.
\end{equation}
\end{proposition}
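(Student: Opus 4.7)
The plan is to verify the equality \eqref{e-cv} by proving the two inequalities $c(H)\leqslant 0$ and $c(H)\geqslant 0$ directly from the definition \eqref{dcv}. Both directions are immediate consequences of the observations (1) and (2) listed just before the proposition; the argument is a short two-step computation rather than any deep analysis.

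For the upper bound, I would exploit observation (1): the zero function $\mathbf{0}$ is a classical (hence smooth) solution of \eqref{i-e} with $c=0$, since $|\mathbf 0'(x)|^2+\sin(x)\cdot 0=0$ identically on $\mathbb S^1$. Plugging $u\equiv \mathbf{0}$ as a test function into \eqref{dcv} gives
\[
c(H)\;\leqslant\;\sup_{x\in\mathbb S^1}\bigl[|\mathbf 0'(x)|^2+\sin(x)\cdot 0\bigr]\;=\;0.
\]

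For the lower bound, I would argue by contradiction using observation (2). Suppose $c(H)<0$, and set $\varepsilon:=-c(H)/2>0$. By the definition of infimum in \eqref{dcv}, there exists $u\in C^\infty(\mathbb S^1)$ with
\[
\sup_{x\in\mathbb S^1}\bigl[|u'(x)|^2+\sin(x)u(x)\bigr]\;\leqslant\;c(H)+\varepsilon\;=\;\tfrac{c(H)}{2}\;<\;0.
\]
Evaluating this inequality at the zero $x=0$ of $\sin$ yields $|u'(0)|^2\leqslant c(H)/2<0$, an impossibility (the same argument works at $x=\pi$). Hence $c(H)\geqslant 0$, and combining the two bounds gives $c(H)=0$.

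There is no substantial obstacle here; the only thing to be careful about is quoting the definition \eqref{dcv} correctly and noting that $\sin(x)$ vanishes at points of $\mathbb S^1$, which is exactly the mechanism by which the sign-changing coefficient $\lambda(x)=\sin(x)$ forces the critical value to agree with the natural threshold $\min h=0$.
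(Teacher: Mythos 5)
Your proof is correct and follows essentially the same route as the paper, which derives $c(H)=0$ directly from the two observations preceding the proposition: $u\equiv\mathbf{0}$ as a test function in \eqref{dcv} gives $c(H)\leqslant 0$, and the impossibility of $|u'(x)|^2\leqslant c<0$ at a zero of $\sin$ rules out $c(H)<0$. Your contradiction argument for the lower bound just makes explicit the approximation-by-the-infimum step that the paper leaves implicit.
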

In this section, for $c\geq0$, we drop the superscript and use  $u_{\pm}$ and $T_t^{\pm}$ to denote $u^c_{\pm}$ and $T_t^{c,\pm}$ respectively. According to the value of the righthand constant, we divide the discussion into two parts.

\subsection{The critical case $c=0$}
For $c=0$, Theorem \ref{exist-sol} implies that the set $\mathcal{S}^0_-$ of solutions to \eqref{i-e} is non-empty. Since the Hamiltonian $H$ is smooth and strictly convex in $p$, by \cite[Theorem 5.3.7]{CS}, any solution $u_-\in\mathcal{S}^0_-$ is locally semiconcave. In particular,
\begin{equation}\label{d+}
D^+ u_-(x)\neq\emptyset,\quad\text{for any}\,\,x\in[0,2\pi].
\end{equation}

\begin{proposition}\label{nonnegative}
Any $u_-\in\mathcal{S}^0_-$ satisfies
\begin{equation}\label{back}
u_-(x)\equiv0\quad \text{for}\quad x\in[0,\pi],\quad u_-(x)\geqslant0\quad \text{for}\quad x\in[\pi,2\pi].
\end{equation}
\end{proposition}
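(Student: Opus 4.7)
The plan is to extract both conclusions from the almost-everywhere identity that $u_-$ satisfies as a Lipschitz viscosity solution, using only the sign of $\sin(x)$ on each of the two half-circles. By Theorem \ref{Lip-bound}, every $u_-\in\mathcal{S}^0_-$ lies in $W^{1,\infty}(\mathbb{S}^1)$, hence is differentiable Lebesgue-a.e., and at every such point the equation \eqref{i-e} with $c=0$ forces
\[
|u_-'(x)|^2=-\sin(x)u_-(x).
\]
On $(0,\pi)$, where $\sin(x)>0$, the identity gives $u_-\leqslant 0$ a.e., extended to $[0,\pi]$ by continuity; on $(\pi,2\pi)$, where $\sin(x)<0$, it gives $u_-\geqslant 0$ a.e., extended to $[\pi,2\pi]$ by continuity. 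This already delivers the second half of the proposition.

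Next, the $2\pi$-periodicity of $u_-$ combined with both one-sided bounds pins down $u_-(0)=u_-(2\pi)=0$, because this common value is simultaneously $\leqslant 0$ and $\geqslant 0$, and similarly $u_-(\pi)=0$. It remains to upgrade $u_-\leqslant 0$ on $[0,\pi]$ to $u_-\equiv 0$ on $[0,\pi]$, equivalently $m:=\min_{[0,\pi]}u_-\geqslant 0$.

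Suppose toward a contradiction $m<0$ and choose $x^*\in[0,\pi]$ with $u_-(x^*)=m$. Since the endpoints satisfy $u_-(0)=u_-(\pi)=0>m$, the minimizer must lie in the interior: $x^*\in(0,\pi)$. Then $x^*$ is a local minimum of $u_-$, so $0\in D^-u_-(x^*)$, and the viscosity supersolution inequality at $x^*$ reads
\[
|0|^2+\sin(x^*)m\geqslant 0,
\]
which contradicts $\sin(x^*)>0$ and $m<0$. Hence $m\geqslant 0$, and combined with the first step we conclude $u_-\equiv 0$ on $[0,\pi]$.

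I do not anticipate a substantive obstacle. The two facts I use---that a Lipschitz viscosity solution satisfies its equation Lebesgue-a.e.\ (cf.\ \cite[Proposition 1.14]{Ishii_chapter}) and that $0$ belongs to the subdifferential at an interior local minimum---are standard in viscosity solution theory, and the remainder is just sign bookkeeping engineered by the factor $\sin(x)$ and the periodicity of the circle.
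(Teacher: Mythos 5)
Your proposal is correct and follows essentially the same route as the paper: first derive the sign condition $\sin(x)u_-(x)\leqslant 0$ (you via the a.e.\ pointwise equation for Lipschitz viscosity solutions plus continuity, the paper via semiconcavity and the subsolution inequality at every point of $D^+u_-$), then use the supersolution inequality at an interior minimum of $u_-$ on $(0,\pi)$ to force the minimum value to be nonnegative. The differences are cosmetic and both steps are sound.
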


\begin{proof}
By \eqref{d+}, the condition of being a subsolution implies
\begin{equation}\label{cond-sub}
\sin(x)u_-(x)\leqslant  0,\quad \text{for any}\,\,x\in[0,2\pi].
\end{equation}
From the continuity of $u_-$ and the first inequality above, we have
\begin{equation}\label{eq:5}
\begin{split}
u_-(x)\leqslant0\quad &\text{for}\quad x\in[0,\pi],\quad u_-(x)\geqslant0\quad \text{for}\quad x\in[\pi,2\pi],\\
&\text{so that}\quad u_-(0)=u_-(\pi)=0.
\end{split}
\end{equation}
By \eqref{eq:5}, $u_-$ attains its minimum at $x^{\ast}\in(0,\pi)$. Since a locally semiconcave function is differentiable at a local minima, $u_-$ is differentiable at $x^\ast$ and $u_-^{\prime}(x^\ast)=0$. Since $u_-$ is a supersolution to \eqref{i-e},   one may conclude that $0\in D^-u_-(x^*)$ and
\[
\sin(x^\ast)u_-(x^\ast)=|0|^2+\sin(x^\ast)u_-(x^\ast)\geqslant0,
\]
thus for any $x\in[0,2\pi],\,\,u_-(x)\geqslant u_-(x^\ast)\geqslant0$. Combining this with \eqref{eq:5}, any $u_-\in\mathcal{S}^0_-$ satisfies \eqref{back}.
\end{proof}

\begin{remark}
By repeating the discussion for $H(x,-p,-u)$, it is readily seen that similar conclusion holds true for $u_+\in\mathcal{S}^0_+$, i.e., they all satisfy
\begin{equation}\label{for}
u_+(x)\leqslant 0\quad \text{for}\quad x\in[0,\pi],\quad u_+(x)\equiv0\quad \text{for}\quad x\in[\pi,2\pi].
\end{equation}
\end{remark}

\medskip
Notice that there is a classical solution $\phi\in C^1([\pi,2\pi])$ of
\begin{equation}\label{eq:appendix:ex1}
\phi^\prime (x)= \sqrt{(-\sin x)\phi} \text{ \ in \ } [\pi, 2\pi], \quad \phi(\pi)=0,  \quad\phi(x)>0, \text{ \ if \ } x>\pi
\end{equation}
Indeed, setting
$$
\phi(x)=\Big( \frac{1}{2} \int_\pi^x \sqrt{-\sin t} d t \Big)^2,
$$
we see that $\phi$ is a desired solution of \eqref{eq:appendix:ex1}. We define $u_{\pi,2\pi}:[0,2\pi]\rightarrow\R$ by
$$
u_{\pi,2\pi}(x)=
\begin{cases}
	0, \quad & x\in [0,\pi]\\
	\Big( \frac{1}{2} \int_\pi^x \sqrt{-\sin t} d t \Big)^2, \quad & x\in [\pi,\frac{3}{2}\pi]\\
	\Big( \frac{1}{2} \int_x^{2\pi} \sqrt{-\sin t} d t \Big)^2, \quad &x\in [\frac{3}{2}\pi ,2\pi ],
\end{cases}
$$
then $u_{\pi,2\pi}\in\mathcal{S}^0_-$.
%{\color{blue}
%\begin{align}
%	|u^\prime(x)|^2+\sin(x)u(x)=\Big(\frac{1}{2}  \int_\pi^x \sqrt{-\sin t} d t \Big)^2 \cdot |-\sin x|+\sin(x)u(x)=0
%\end{align}
%}
More generally, for $\pi\leqslant a<b \leqslant 2\pi$, we define the function
$$
u_{a,b}(x)=
\begin{cases}
	0, \quad & x\in [0,a]\\
	\min\Big\{  \Big( \frac{1}{2} \int_a^x \sqrt{-\sin t} d t \Big)^2,\Big( \frac{1}{2} \int_x^b \sqrt{-\sin t} d t \Big)^2 \Big\}, \quad & x\in [a,b]\\
	0, \quad &x\in [b,2\pi ]
\end{cases}
$$
and choose a sequence of mutually disjoint interval $(a_i,b_i)\subset(\pi,2\pi)$, with $i\in \mathbb{N}$ or $i\leqslant K$, then the function
$$
\sum_i u_{a_i,b_i}(x)
$$
is a solution to \eqref{i-e}, thus belongs to $\mathcal{S}^0_-$. The following theorem shows all elements of $\mathcal{S}^0_-$ belong to this family. Assume $u_-:[0,2\pi]\rightarrow\R$ is a solution of \eqref{i-e}, by Proposition \ref{nonnegative}, the set
\[
\{x\in [0 ,2\pi],u_-(x)>0\}
\]
is an open subset of $(\pi,2\pi)$, therefore can be written as an at most countable union of mutually disjoint open intervals $(a_i,b_i),i\in I=\mathbb{N}$ or $\{1,2,...,K\}$. We have

\begin{theorem}\label{cri-sol}
Assume $u_-\in\mathcal{S}^0_-$ and
\begin{equation}\label{eq:18}
\{x\in [0 ,2\pi],u_-(x)> 0 \}=\bigcup_{i\in I}(a_i,b_i)\subset(\pi,2\pi),
\end{equation}
then
$$
u_-(x)=\sum_{i} u_{a_i,b_i}(x),\quad x\in [0,2\pi].
$$
\end{theorem}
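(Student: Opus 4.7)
Fix a component $(a,b) = (a_i,b_i)$ from \eqref{eq:18}. Since $u_-$ vanishes on $[0,2\pi]\setminus\bigcup_i(a_i,b_i)$, it suffices to show $u_-\equiv u_{a,b}$ on $[a,b]$. By maximality of $(a,b)$, continuity of $u_-$, and Proposition~\ref{nonnegative}, we have $u_-(a)=u_-(b)=0$, $u_->0$ on $(a,b)$, and $(a,b)\subset(\pi,2\pi)$.

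The first step is to exclude interior local minima of $u_-$ on $(a,b)$: at any such $\bar x$ we have $0\in D^-u_-(\bar x)$, so the supersolution inequality gives $\sin(\bar x)u_-(\bar x)\geqslant 0$; combined with $\sin(\bar x)<0$ and $u_-(\bar x)>0$, this is a contradiction. Let $x^*\in(a,b)$ be a maximizer of $u_-$ on $[a,b]$; the maximum is strictly positive. If $u_-$ failed to be non-decreasing on $[a,x^*]$, one could pick $y_1<y_2$ in $(a,x^*]$ with $u_-(y_1)>u_-(y_2)$, and then $u_-$ on $[y_1,x^*]$ would attain its minimum strictly in the interior (since both endpoints exceed $u_-(y_2)$), producing a forbidden interior local minimum. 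Hence $u_-$ is non-decreasing on $[a,x^*]$, and symmetrically non-increasing on $[x^*,b]$. The same no-local-min principle also forces uniqueness of $x^*$: any subinterval of constancy $u_-\equiv M>0$ would give $0=(u_-')^2=-\sin x\cdot M$ a.e., contradicting $\sin x<0$ on $(\pi,2\pi)$.

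Next I upgrade the a.e. Hamilton--Jacobi equation to a classical ODE on $[a,x^*]$. Being locally semiconcave, $u_-$ is Lipschitz, hence absolutely continuous and differentiable a.e., with $(u_-')^2=-\sin x\cdot u_-$ at every differentiability point. Monotonicity pins down $u_-'=\sqrt{-\sin x\cdot u_-}$ a.e., so by absolute continuity
\[
u_-(x)=\int_a^x\sqrt{-\sin t\cdot u_-(t)}\,dt.
\]
The integrand is continuous in $t$, so $u_-\in C^1([a,x^*])$ with this as its classical derivative. On $(a,x^*]$ the auxiliary function $G(x):=2\sqrt{u_-(x)}-\int_a^x\sqrt{-\sin t}\,dt$ is differentiable with $G'\equiv 0$; continuity at $x=a$ forces $G\equiv 0$, giving
\[
u_-(x)=\Bigl(\frac{1}{2}\int_a^x\sqrt{-\sin t}\,dt\Bigr)^{\!2} \quad\text{on }[a,x^*].
\]
A symmetric argument (with $u_-'=-\sqrt{-\sin x\cdot u_-}$) yields $u_-(x)=\bigl(\frac{1}{2}\int_x^b\sqrt{-\sin t}\,dt\bigr)^2$ on $[x^*,b]$. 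The first expression is non-decreasing in $x$, the second non-increasing, and they coincide at $x^*$; therefore they cross exactly once and $u_-$ equals their pointwise minimum on $[a,b]$, which is precisely $u_{a,b}$.

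The main obstacle is the promotion step from the a.e. identity to a pointwise $C^1$ identity: this depends on combining monotonicity (to fix the sign of the square root), Lipschitz regularity (to reconstruct $u_-$ from $u_-'$), and continuity of $\sqrt{-\sin t\cdot u_-(t)}$ in $t$. Beyond that, the proof is viscosity-solution bookkeeping together with the explicit integration of the separable ODE $u'=\sqrt{-\sin x}\sqrt{u}$ with $u(a)=0$.
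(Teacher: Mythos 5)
Your argument is correct and follows essentially the same route as the paper: exclude interior local minima of $u_-$ on each component $(a_i,b_i)$ (you via the supersolution inequality at a differentiable minimum, the paper via its characterization of the null set $\mathcal{N}_{u_-}$), deduce the single-peak monotone structure, use monotonicity to fix the sign in $u_-'=\pm\sqrt{-\sin x\cdot u_-}$, and integrate the separable ODE with zero boundary data to recover $u_{a_i,b_i}$. Your promotion step (integrating the Lipschitz function $u_-'$ first and then using the auxiliary function $G$) is a slightly more careful rendering of the paper's appeal to absolute continuity of $\sqrt{u_-}$, but it is not a different method.
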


\begin{proof}
Let $\mathcal{N}_{u_-}=\{x\in [0,2\pi] : u_-(x)=0\}$ denote the null set of $u_-$. Since $\mathcal{N}_{u_-}$ is the set of minima of $u_-$, thus $u_-$ is differentiable at any point $x$ in $\mathcal{N}_{u_-}$ with $u_-^{'}(x)=0$. We use $\mathcal{D}u_-$ to denote the differentiable points of $u_-$. For any $x\in[0,2\pi]$ such that $u'_-(x)$ exists and equals $0$, the equation
\[
0=(u'_-(x))^2+\sin(x)u_-(x)=\sin(x)u_-(x)
\]
and \eqref{back} implies $x\in\mathcal{N}_{u_-}$. Thus we have
\[
\mathcal{N}_{u_-}=\{x\in\mathcal{D}u_- : u'_-(x)=0\}.
\]
It is obvious from the definition \eqref{eq:18} that for any $i\in I$,
\begin{equation}\label{eq:19}
(a_i,b_i)\cap\mathcal{N}_{u_-}=\emptyset.
\end{equation}

\medskip
\textbf{Claim} : For any fixed $i\in I$,  there exists $x_i \in (a_i,b_i)$  such that
$$
u'_-(x)=\begin{cases}
	>0 , \quad x\in (a_i, x_i) \cap \mathcal{D}u_-.\\
	<0 , \quad x\in (  x_i,b_i ) \cap \mathcal{D}u_-.
\end{cases}
$$
\textit{ Proof of the claim:} We argue by contradiction. Assume that there are $\underline{x}<\bar{x}\in(a_i,b_i)$ with
$$
u'_-(\underline{x})<0\quad \quad u'_-(\bar{x})>0,
$$
then $u_{-}|_{[\underline{x},\bar{x}]}$ attains its local minima $x_\ast\in(\underline{x},\bar{x})$. Thus $u_-$ is differentiable at $x_\ast$ with $u_-^{'}(x_\ast)=0$ and $x_\ast\in\mathcal{N}_{u_-}$. This contradicts \eqref{eq:19} and completes the proof.
	
\medskip
	
\medskip
To prove the theorem, it is enough to prove that for each fixed $i\in I$,
\[
u_-|_{[a_i,b_i]}=u_{a_i,b_i}.
\]
Using our claim, we get
\[
\begin{cases}
u_-^\prime (x)=	\sqrt{(-\sin x) u_-(x)}, \quad  &  x\in (a_i, x_i) \cap \mathcal{D}u_-   \\
u_-^\prime (x)= -\sqrt{(-\sin x) u_-(x)}, \quad &   x\in ( x_i,b_i) \cap \mathcal{D}u_-,
\end{cases}
\]
or equivalently
\[
\begin{cases}
(\sqrt{u_-})^\prime (x)=\sqrt{-\sin x}, \quad  &  x\in (a_i, x_i)\cap\mathcal{D}u_-   \\
(\sqrt{u_-})^\prime (x)=-\sqrt{-\sin x}, \quad &   x\in ( x_i,b_i)\cap\mathcal{D}u_-.
\end{cases}
\]
This implies that $\sqrt{u_-}|_{(a_i,x_i)}, \sqrt{u_-}|_{(x_i,b_i)}$ are differentiable almost everywhere with a continuous derivative, thus are absolutely continuous. Applying the fundamental theorem of calculus and the condition $u_-(a_i)=u_-(b_i)=0$, we obtain
\[
u_-(x)=\begin{cases}
\Big( \frac{1}{2} \int_{a_i}^x \sqrt{-\sin t} \ d t \Big)^2, \quad & x\in [a_i,  x_i ]\\
\Big( \frac{1}{2} \int_x^{b_i} \sqrt{-\sin t} \ d t \Big)^2, \quad &x\in [  x_i ,b_i ]
\end{cases}
\]
The above formula and the continuity of $u_-$ at $x_i$ implies that $x_i=\frac{a_i+b_i}{2}$ and
\[
u_-(x)=\min\Big\{  \Big( \frac{1}{2} \int_{a_i}^x \sqrt{-\sin t} d t \Big)^2,\Big( \frac{1}{2} \int_x^{b_i}\sqrt{-\sin t} d t \Big)^2 \Big\}=u_{a_i,b_i}(x),\quad \text{for} \quad x\in [a_i,b_i].
\]
\end{proof}

Theorem \ref{cri-sol} shows that for $c=0$, there are infinite solutions to \eqref{i-e}. However, as a concluding
\begin{remark}\label{unique-prin}
We first observe that for any $u_+\in\mathcal{S}^0_+$ and $u_-\in\mathcal{S}^0_-$,
\[
\lim_{t\rightarrow\infty}T^-_t u_+=\mathbf{0},\quad \lim_{t\rightarrow\infty}T^+_t u_-=\mathbf{0}.
\]
This is due to the following reason: by Definition \ref{principal}, for $x\in[\pi,2\pi]$,
\[
\lim_{t\rightarrow\infty}T^-_t u_+(x) \leqslant \lim_{t\rightarrow\infty}T^-_t \mathbf{0} (x)=0
\]
is a solution to \eqref{i-e}, where the inequality uses \eqref{for}. Applying \eqref{back} to $\lim_{t\rightarrow\infty}T^-_t u_+(x) $ shows that $\lim_{t\rightarrow\infty}T^-_t u_+(x)=\mathbf{0}$. The proof of the second equation is completely similar.

\vspace{1em}
Now if we look for a pair of functions $(u_-,u_+)\in\mathcal{S}^0_-\times\mathcal{S}^0_+$ satisfying
\[
\lim_{t\rightarrow\infty}T^+_t u_-=u_+,\quad \lim_{t\rightarrow\infty}T^-_t u_+=u_-,
\]
then the above observation shows that $(\mathbf{0},\mathbf{0})$ is the unique such pair. In this sense, the results obtained from our main theorem is optimal. The above discussions can be carried out to more general Hamiltonian $H(x,p,u)=|p|^2_x+v(x)u$, where $v:M\rightarrow\R$ is a Morse function and $0$ is a regular value of $v$.
\end{remark}

\subsection{The noncritical case $c>0$}
Assume $c>0$, we want to give a construction of solutions to \eqref{i-e} from the viewpoint of dynamical system and show that $\mathcal{S}^c_-$ contains exactly two elements. For convenience, solutions to \eqref{i-e} are assumed to be $2\pi$-periodic functions on $\R$. For a solution $u_-\in\mathcal{S}^c_-$, we define its graph and 1-graph by
\[
\Lambda^0(u_-)=\{(x,u_-(x))\,:\,x\in\mathbb{R}\}\quad\text{and}\quad\Lambda^1(u_-)=\{(x,p,u)\,:\,u=u_-(x),p\in D^{\ast}u_-(x),x\in\mathbb{R}\}.
\]
To begin with, we notice that a solution $u_-\in \mathcal{S}^c_-$ is a fixed point of backward solution semigroup, i.e.,
\[
T^{-}_t u_-=u_-,\quad\text{for any}\quad t>0.
\]
By Proposition \ref{Implicit variational} and Theorem \ref{Lip-bound}, the above equality implies that for any $x\in\mathbb{R}$, there is an orbit
\[
(x(t),p(t),u(t)),\quad t\in(-\infty,0]
\]
of the characteristic system \eqref{eq:ode} with the contact Hamiltonian
\begin{equation}\label{ham-c}
H^c(x,p,u)=|p|^2+\sin(x)u-c
\end{equation}
such that $x(0)=x,\,p(0)\in D^\ast u_-(x),\,u(t)=u_-(x(t))$, here $D^{\ast}u_-(x)$ denotes the reachable gradients of the solution $u_-$ at $x$. Moreover, for any $t<0$, $x(t)\in\mathcal{D}u_-$ with $u'_-(x(t))=p(t)$.

\begin{remark}
For such an orbit and any $s\leqslant t\leqslant0$, it follows that
\[
u_-(x(t))-u_-(x(s))=\int^{t}_{s} L^c(x(\tau), \dot x(\tau),u_-(x(\tau)))\ d\tau.
\]
Due to this identity, $(x(t),p(t),u(t)),\,t\in(-\infty,0]$ is said to be calibrated by $u_-$ or briefly called calibrated orbit since $u_-$ is fixed.
\end{remark}
Since $u_-$ is a solution to \eqref{HJs}, any calibrated orbit $(x(t),p(t),u(t)), t\in(-\infty,0]$
\begin{enumerate}[(1)]
  \item lies on the regular energy shell $M^c:=\{(x,p,u)\in T^\ast\mathbb{R}\times\R, H(x,p,u)=c\}$, preserved by the contact Hamiltonian flow $\Phi^t_{H^c}$.
  \item and $\alpha(x(0),p(0),u(0))$ is a nonempty connected, $\Phi^t_{H^c}$-invariant subset. Besides, elementary knowledge from topological dynamics shows this set is included in the non-wandering set of $\Phi^t_{H^c}|_{M^c}$.
\end{enumerate}

%\begin{figure}
%  \begin{center}
%  \includegraphics[width=8cm]{vu.png}
%  \caption{ $M^c=\{(x,p,u)\in T^\ast\mathbb{R}\times\R,\,\,p^2+\sin(x)u=c\}$ }
%  \label{fig1}
%  \end{center}
%\end{figure}

%It is easy to see that
%\begin{equation}
%M^c=\{(x,\pm\sqrt{c-\sin(x)u},u)\,:\,(x,u)\in\R^2,\,\,\sin(x)u\leqslant  c\}.
%\end{equation}

 When restricting to $M^c$, the system \eqref{eq:ode}, with the Hamiltonian defined by \eqref{ham-c}, has the form
\begin{equation}\label{eq:i-e}
\left\{
\begin{aligned}
\dot x&=2p,\\
\dot p &=-\cos(x)u-\sin(x)p,\\
\dot u&=2p^2.
\end{aligned}
\right.
\end{equation}
We shall use some symmetric property given by the above equation
\begin{lemma}\label{sym}
Assume \eqref{eq:i-e} admits an integral curve $(x(t),p(t),u(t)), t\in I$, where $I$ is an open interval, then
\[
(x(t)+2\pi,p(t),u(t)),\quad (\pi-x(t),-p(t),u(t)),\quad t\in I
\]
are also integral curves of \eqref{eq:i-e}.
\end{lemma}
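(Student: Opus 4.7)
The plan is to verify both symmetries by direct substitution into the system \eqref{eq:i-e}, exploiting the $2\pi$-periodicity of the trigonometric coefficients for the first symmetry and the standard reflection identities $\sin(\pi-x)=\sin x$, $\cos(\pi-x)=-\cos x$ for the second. No further theory is needed; this is a purely algebraic check, and I would present it as two short parallel verifications.

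For the first symmetry, set $(X(t),P(t),U(t))=(x(t)+2\pi,p(t),u(t))$. Then $\dot X=\dot x=2p=2P$, $\dot U=\dot u=2p^2=2P^2$, and using $2\pi$-periodicity of $\sin,\cos$,
\[
\dot P=\dot p=-\cos(x)u-\sin(x)p=-\cos(X)U-\sin(X)P,
\]
so $(X,P,U)$ solves \eqref{eq:i-e}. This essentially just records the fact that $x$ lives on $\mathbb{S}^1$.

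For the second symmetry, set $(X(t),P(t),U(t))=(\pi-x(t),-p(t),u(t))$. The first and third equations give $\dot X=-\dot x=-2p=2P$ and $\dot U=\dot u=2p^2=2(-p)^2=2P^2$. For the momentum equation, using $\cos(\pi-x)=-\cos x$ and $\sin(\pi-x)=\sin x$,
\[
-\cos(X)U-\sin(X)P=\cos(x)u+\sin(x)p=-\bigl(-\cos(x)u-\sin(x)p\bigr)=-\dot p=\dot P,
\]
so again \eqref{eq:i-e} is satisfied.

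There is no real obstacle here; the only point to watch is the sign bookkeeping in the $\dot P$ equation of the second case, where the sign flip in $P=-p$ conspires with the sign flip in $\cos(\pi-x)=-\cos x$ to produce the required identity. I would therefore write the proof as two short displays as above, preceded by the one-line remark that these are the two geometric symmetries of the sine-potential Hamiltonian \eqref{ham-c}, namely translation by the period and reflection about $x=\pi/2$ combined with the involution $p\mapsto -p$.
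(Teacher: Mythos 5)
Your proof is correct and follows essentially the same route as the paper: the translation symmetry is dispatched by $2\pi$-periodicity of the coefficients, and the reflection symmetry $(x,p,u)\mapsto(\pi-x,-p,u)$ is checked by direct substitution using $\cos(\pi-x)=-\cos x$ and $\sin(\pi-x)=\sin x$. The sign bookkeeping in your $\dot P$ computation matches the paper's display exactly, so nothing needs to be added.
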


\begin{proof}
$(x(t)+2\pi,p(t),u(t))$ is an integral curve of \eqref{eq:i-e} since the system \eqref{eq:i-e} is $2\pi$-periodic in $x$. For $t\in I$, we verify by \eqref{eq:i-e} and direct computation as
\[
\left\{
\begin{aligned}
&\frac{d}{dt}(\pi-x(t))=-\dot{x}(t)=-2p(t)=2(-p(t)),\\
&\frac{d}{dt}(-p(t))=-\dot{p}(t)=\cos(x(t))u(t)+\sin(x(t))p(t)\\
&\hspace{1.76cm}=-\cos(\pi-x(t))u(t)-\sin(\pi-x(t))(-p(t)),\\
&\frac{d}{dt}u(t)=2p(t)^2=2(-p(t))^2,
\end{aligned}
\right.
\]
which shows $(\pi-x(t),-p(t),u(t)),t\in I$ is also an integral curve.
\end{proof}

Another simple but cruicial observation from the system \eqref{eq:i-e} is that
\begin{itemize}
  \item along the integral curve, $\dot{u}=2p^2\geqslant0$, so $u(t)$ is nondecreasing.
\end{itemize}
This leads to
\begin{lemma}\label{snw}
The non-wandering set of $\Phi^t_{H^c}|_{M^c}$ consists of hyperbolic fixed points
\[
(\frac{\pi}{2}+2k\pi,0,c),\quad (-\frac{\pi}{2}+2k\pi,0,-c),\quad k\in\mathbb{Z},
\]
whose unstable manifolds are one-dimensional. Moreover, every calibrated orbit lies on the unstable manifold of one of these fixed points.
\end{lemma}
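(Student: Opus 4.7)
My plan hinges on the identity $\dot u = 2p^2 \geq 0$ along \eqref{eq:i-e}, already noted in the excerpt, which makes $u$ a Lyapunov function, strict outside the locus $\{p = 0\}$. Together with an explicit computation of the critical points, this drives the whole argument.

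First, I would identify the fixed points. Setting the right-hand side of \eqref{eq:i-e} to zero forces $p = 0$; the constraint $\sin(x) u = c > 0$ on $M^c$ then rules out $u = 0$, so $\dot p = -\cos(x) u = 0$ reduces to $\cos(x) = 0$, recovering exactly the two families listed in the statement. To check hyperbolicity on $M^c$, I would compute the Jacobian of the vector field at $(\pi/2, 0, c)$, obtaining eigenvalues $0$ and $(-1 \pm \sqrt{1+8c})/2$; the $0$-eigenvector $(0,0,1)$ coincides with $\nabla H^c$ at this point, hence lies normal to $M^c$, and the two remaining eigenvalues are of opposite signs. So the restriction to $T M^c$ is a saddle with one-dimensional unstable manifold. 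The computation at $(-\pi/2, 0, -c)$ is identical up to signs, or can be deduced from the symmetry in Lemma \ref{sym}.

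Second, for the non-wandering claim, if $z \in M^c$ is non-wandering, pick sequences $z_n \to z$ and $t_n \to \infty$ with $\Phi^{t_n}_{H^c}(z_n) \to z$. Monotonicity of $u$ gives $\int_0^{t_n} 2 p(\Phi^s_{H^c} z_n)^2 \, ds = u(\Phi^{t_n}_{H^c}(z_n)) - u(z_n) \to 0$, and continuous dependence on initial data upgrades this to $\int_0^T 2 p(\Phi^s_{H^c} z)^2 \, ds = 0$ for every finite $T$, so $p \equiv 0$ along the forward orbit of $z$. Invariance then forces $\cos(x) u = 0$ on this orbit, and since $u \neq 0$ on $\{p = 0\} \cap M^c$, we conclude $\cos(x) = 0$, placing $z$ in the fixed point list.

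Finally, for a calibrated orbit $(x(t), p(t), u(t))$, $t \leq 0$, Theorem \ref{Lip-bound} bounds $u_-$ and thus $u(t) = u_-(x(t))$; together with $p^2 = c - \sin(x) u$ the orbit is confined to a compact subset of $M^c$. Monotonicity yields $u(t) \downarrow u_{-\infty}$ as $t \to -\infty$ and $\int_{-\infty}^0 p^2 \, dt < \infty$; boundedness of the right-hand side of \eqref{eq:i-e} makes $p$ uniformly continuous in $t$, and Barbalat's lemma then delivers $p(t) \to 0$. The $\alpha$-limit is therefore a nonempty compact invariant subset of $\{p = 0\} \cap M^c$, hence a single hyperbolic fixed point from the list above, and by definition the orbit lies on its unstable manifold. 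The step I expect to require the most care is the passage from integrability of $p^2$ to pointwise decay $p(t) \to 0$, which is cleanly resolved by Barbalat once the orbit is known to be bounded.
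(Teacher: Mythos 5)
Your proof is correct and follows essentially the same route as the paper's: the monotonicity $\dot u = 2p^2 \geqslant 0$ pins down the fixed points, the linearization restricted to $M^c$ yields the same eigenvalues $(-1\pm\sqrt{1+8c})/2$ (resp.\ $(1\pm\sqrt{1+8c})/2$) and hence hyperbolicity with one-dimensional unstable manifolds, and the $\alpha$-limit argument places calibrated orbits on those unstable manifolds. The only difference is one of detail: you give full proofs (via continuous dependence and Barbalat's lemma) of the two topological-dynamics facts the paper treats as standard, namely that non-wandering points must be fixed and that the $\alpha$-limit of a bounded calibrated orbit is a single fixed point.
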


\begin{proof}
Assume an orbit $(x(t),p(t),u(t))$ belongs to the non-wandering set. Then there is $u_0\in\R$ such that $u(t)\equiv u_0$ and $\dot{u}(t)=|p(t)|^2\equiv0$. As a consequence, for some $x_0\in M$,
\begin{equation}\label{eq:6}
\begin{split}
\dot{x}(t)&=2 p(t)\equiv0,\quad x(t)=x_0,\\
\dot{p}(t)&=-\cos(x_0)u_0\equiv0.
\end{split}
\end{equation}
From these, we deduce that $(x_0,0,u_0)$ is a fixed point. According to the second equation of \eqref{eq:6}, we obtain that
\[
x_0=\pm\frac{\pi}{2}+2k\pi\quad\text{or}\quad u_0=0.
\]
Concerning $(x_0,0,u_0)\in M^c$, we have $u_0\neq0$ and the only fixed points are $(\pm\frac{\pi}{2}+2k\pi,0,\pm c)$.

\vspace{1em}
Forgetting the last equation of \eqref{i-e}, we linearize the remaining two on the two dimensional energy shell $M^c$. Set $X=x-x_0, P=p$, we obtain that
\[
\begin{split}
&\left(
  \begin{array}{c}
    \dot{X} \\
    \dot{P} \\
  \end{array}
\right)
=\left(
   \begin{array}{cc}
     0 & 2 \\
     c & -1 \\
   \end{array}
 \right)
\left(
  \begin{array}{c}
    X \\
    P \\
  \end{array}
\right),\quad \text{at}\,\,(x_0,0,u_0)=(\frac{\pi}{2}+2k\pi,0,c);\\
&\left(
  \begin{array}{c}
    \dot{X} \\
    \dot{P} \\
  \end{array}
\right)
=\left(
   \begin{array}{cc}
     0 & 2 \\
     c & 1 \\
   \end{array}
 \right)
\left(
  \begin{array}{c}
    X \\
    P \\
  \end{array}
\right),\quad \text{at}\,\,(x_0,0,u_0)=(-\frac{\pi}{2}+2k\pi,0,-c).
\end{split}
\]
Since $c>0$, it follows that all fixed points are hyperbolic and have one-dimensional stable and unstable manifolds on $M^c$. Since the $\alpha$-limit of every calibrated orbit is connected and included in the non-wandering set of $\Phi^t_{H^c}|_{M^c}$, it has to be one of these fixed points. This completes the proof.
\end{proof}
Now we project the dynamics on $M^c$ onto the $(x,u)$-plane. By the equations $du=pdx$, the projected unstable manifolds of fixed points are 1-graphs of functions near fixed points, but may admit cusp type singularities and turn around in $x$-direction, as is depicted below (contact geometers call these projections \textbf{wave fronts}, and are familiar with their structures, see \cite[Section 1]{ETM}).

\begin{figure}[h]
\begin{center}
\includegraphics[width=9cm]{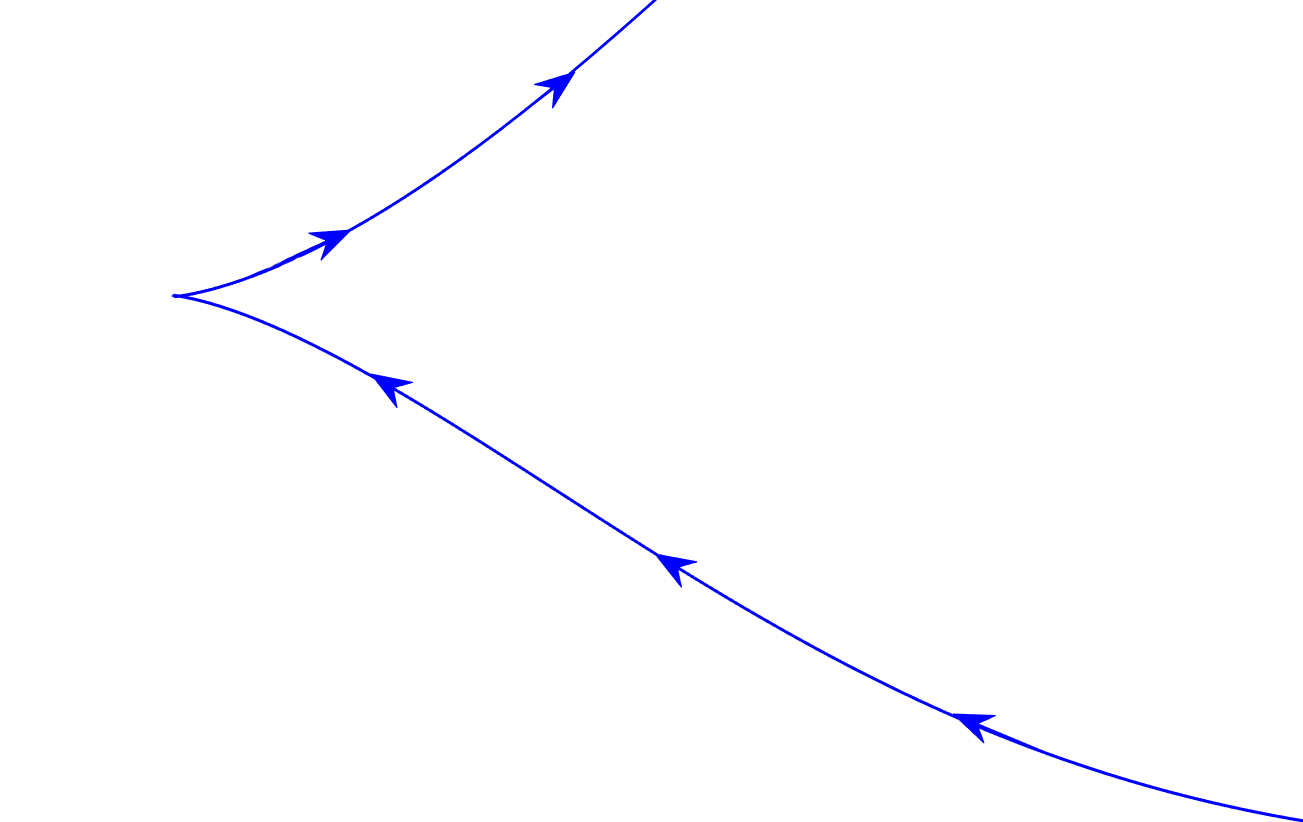}
%\caption{   }
\end{center}
\end{figure}
Assume $(x(t),p(t),u(t)),t\in(-\infty,0]$ serves as a calibrated orbit of $u_-$, then it lies on the graph of $u_-$ and can not swerve, i.e., $p(t)$ does not change sign on $(-\infty,0]$. Thus we only consider the ``horizontal'' part of the unstable manifolds before turning around and denote them by the notation $\mathrm{W}^u$. According to the equation \eqref{eq:i-e} and Lemma \ref{sym}, we have

\begin{proposition}\label{unstable-prop}
There are $C^1$ functions
\[
\phi_0:\bigg[-\frac{\pi}{2}-\sigma_0,-\frac{\pi}{2}+\sigma_0\bigg]\rightarrow\mathbb{R},\quad \phi_1:\bigg[\frac{\pi}{2}-\sigma_1,\frac{\pi}{2}+\sigma_1\bigg]\rightarrow\mathbb{R}
\]
and $\sigma_0,\sigma_1\in(0,+\infty]$ such that for any $k\in\mathbb{Z}$,
\begin{equation}\label{unstable}
\begin{split}
\mathrm{W}^{u}\bigg(-\frac{\pi}{2}+2k\pi,0,-c\bigg)&=\bigg\{(x,\phi_0(x-2k\pi)):x\in[-\frac{\pi}{2}+2k\pi-\sigma_0,-\frac{\pi}{2}+2k\pi+\sigma_0]\bigg\},\\
\mathrm{W}^{u}\bigg(\frac{\pi}{2}+2k\pi,0,c\bigg)&=\bigg\{(x,\phi_1(x-2k\pi)):x\in[\frac{\pi}{2}+2k\pi-\sigma_1,\frac{\pi}{2}+2k\pi+\sigma_1]\bigg\}
\end{split}
\end{equation}
and
\begin{enumerate}[(1)]
  \item $\phi_0|_{[-\frac{\pi}{2},-\frac{\pi}{2}+\sigma_0]}$ and $\phi_1|_{[\frac{\pi}{2},\frac{\pi}{2}+\sigma_1]}$ are strict increasing and $\phi_0(x)=\phi_0(-\pi-x), \phi_1(x)=\phi_1(\pi-x)$. As a result, $\phi_0$ takes minimum $-c$ only at $x=-\frac{\pi}{2}$ and $\phi_1$ takes minimum $c$ only at $x=\frac{\pi}{2}$.

  \item for any $u_-\in\mathcal{S}^c_-$ and $(x,p,u)\in\Lambda^1(u_-)$, if
        \[
        \alpha(x,p,u)=(-\frac{\pi}{2}+2k\pi,0,-c)\quad\bigg(\text{resp. }(\frac{\pi}{2}+2k\pi,0,c)\bigg),
        \]
        then $\min\{\sigma_0,2\pi\}\geq|-\frac{\pi}{2}+2k\pi-x|\,\,($ resp. $\min\{\sigma_1,2\pi\}\geq|\frac{\pi}{2}+2k\pi-x|\,\,)$ and
        \begin{align*}
        u_-|_{[x,-\frac{\pi}{2}+2k\pi]}\quad\text{or}\quad u_-|_{[-\frac{\pi}{2}+2k\pi,x]}=\phi_0(\cdot-2k\pi),\\
        (\text{resp. }u_-|_{[x,\frac{\pi}{2}+2k\pi]}\quad\text{or}\quad u_-|_{[\frac{\pi}{2}+2k\pi,x]}=\phi_1(\cdot-2k\pi)).
        \end{align*}
\end{enumerate}
\end{proposition}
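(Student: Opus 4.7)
The plan is to analyse each hyperbolic fixed point in turn and build the graphs $\phi_0,\phi_1$ from its unstable manifold. First I would use the linearization of \eqref{eq:i-e} restricted to $M^c$ computed in the proof of Lemma \ref{snw}: at each fixed point there is a unique positive eigenvalue $\lambda_+$ with eigenvector $(2,\lambda_+)$ in the $(X,P)$-plane, where $(X,P)=(x-x_0,p)$. The unstable manifold theorem thus produces a smooth 1-dimensional local unstable manifold. Combining the relation $du/dx=p$ from \eqref{eq:i-e} with the first-order approximation $p\sim (\lambda_+/2)(x-x_0)$ on the unstable direction, one obtains $u(x)-u_0 = \tfrac{1}{2}\alpha_c (x-x_0)^2+O((x-x_0)^3)$ with a constant $\alpha_c>0$ whose positivity is consistent with $\dot u=2p^2\geq 0$. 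Hence locally the unstable manifold is the graph of a smooth function with strict quadratic minimum equal to $u_0$ (either $c$ or $-c$) at $x_0$.

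Next I would follow the flow of \eqref{eq:i-e} to extend these local graphs. Each fixed point produces two branches of unstable manifold, labelled by the sign of $p$. As long as $p$ retains its sign, $\dot x=2p\neq 0$, so $x$ is a monotone parameter and the projection to the $(x,u)$-plane remains a smooth graph. Let $\sigma^{\pm}\in(0,+\infty]$ measure the $x$-displacement on each branch until $p$ first returns to zero (a turning point of the wave front); this defines $\phi$ on $[x_0-\sigma^{-},x_0+\sigma^{+}]$. Strict monotonicity on each side follows from $du/dx=p$. The required symmetries then come from Lemma \ref{sym}: the involution $(x,p,u)\mapsto(\pi-x,-p,u)$ fixes $(\pi/2,0,c)$ and swaps its two unstable branches, forcing $\phi_1(x)=\phi_1(\pi-x)$ and $\sigma_1^{+}=\sigma_1^{-}=:\sigma_1$; the same involution maps $(-\pi/2,0,-c)$ to $(3\pi/2,0,-c)=(-\pi/2+2\pi,0,-c)$, which combined with the $2\pi$-translation invariance yields $\phi_0(x)=\phi_0(-\pi-x)$ and $\sigma_0^{+}=\sigma_0^{-}=:\sigma_0$.

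For the second assertion, fix $u_-\in\mathcal{S}^c_-$ and $(x_{\ast},p_{\ast},u_{\ast})\in\Lambda^1(u_-)$ with $\alpha$-limit $(-\pi/2+2k\pi,0,-c)$; the case of the other family of fixed points is identical. By Lemma \ref{snw}, the calibrated orbit $(x(t),p(t),u(t))$, $t\leq 0$, ending at $(x_{\ast},p_{\ast},u_{\ast})$ is entirely contained in $\mathrm{W}^{u}(-\pi/2+2k\pi,0,-c)$. The orbit cannot cross a turning point, for otherwise the identity $u(t)=u_-(x(t))$ would force the single-valued function $u_-$ to take two distinct values at a common $x$-coordinate (the orbit has $u(t_1)<u(t_2)$ for times $t_1<t_2$ on opposite sides of the turn with $x(t_1)=x(t_2)$). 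Thus the orbit sits on the horizontal part and its $(x,u)$-projection is a connected subarc of $\{(x,\phi_0(x-2k\pi))\}$, yielding $u_-|_I=\phi_0(\cdot-2k\pi)$ on the interval $I$ joining $x_{\ast}$ and $-\pi/2+2k\pi$. The extra bound $|-\pi/2+2k\pi-x_{\ast}|\leq 2\pi$ comes from the $2\pi$-periodicity of $u_-$ combined with the strict monotonicity of $\phi_0$ on each side of $-\pi/2+2k\pi$: an $x$-range exceeding $2\pi$ on one branch would force $\phi_0$ to take equal values at two distinct points of a strictly monotone branch.

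The most delicate step is the justification that a calibrated orbit does not traverse a turning point of its unstable manifold. Although intuitive from the wave-front picture (past a cusp, the projected front is locally two-sheeted over $x$), the rigorous argument requires comparing this multi-valuedness with the single-valuedness of $u_-$, as sketched above. All remaining steps reduce, once the correct dynamical picture is in place, to standard applications of the unstable manifold theorem, the symmetry of \eqref{eq:i-e} recorded in Lemma \ref{sym}, and elementary monotonicity-plus-periodicity arguments.
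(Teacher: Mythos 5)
Your proof is correct and follows essentially the same route as the paper's: local analysis at the hyperbolic fixed points, extension of the graphs along the flow using the sign of $p$ together with $\dot u=2p^2$, the symmetries of Lemma \ref{sym} to get $\phi_0(x)=\phi_0(-\pi-x)$ and $\phi_1(x)=\phi_1(\pi-x)$, and $2\pi$-periodicity plus strict monotonicity for the bound $|{-\frac{\pi}{2}}+2k\pi-x|\leqslant 2\pi$. The only differences are cosmetic refinements: you invoke the unstable manifold theorem with a local quadratic expansion where the paper cites Hartman--Grobman, and you spell out the ``no swerving across a turning point'' argument that the paper asserts without proof in the paragraph preceding the proposition.
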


\begin{proof}
From the form of the equation \eqref{eq:i-e}, we deduce that for any integral curve near the fixed points, $|\frac{du}{dx}(x(t))|=|p(t)|\ll1$. Thus the existence of $C^1$ function $\phi_i$ and constants $\sigma_i$ is a direct consequence of Hartman-Grobman Theorem \cite[Theorem 4.1]{Palis-de Melo} and Lemma \ref{sym}.

\vspace{1em}
(1)\,\,According to the definition of $\phi_i,\sigma_i$, we assume for $x(0)=-\frac{\pi}{2}+\sigma_0$, the integral curve $(x(t),u(t)):=(x(t),\phi_0(x(t))),\,\,t\in(-\infty,0]$ describes half of the projected horizontal unstable manifold of $(-\frac{\pi}{2},0,-c)$. From the proof of Lemma \ref{snw}, it is clear that if $p(t)\geq0$ vanishes on a non-empty open interval if and only if $(x(t),p(t),u(t))$ is a fixed point, this contradicts the assumption. Thus for any $-\infty<t_1<t_2\leq0$,
\[
x(t_2)-x(t_1)=\int^{t_2}_{t_1}p(\tau)d\tau>0,\quad \phi_0(x(t_2))-\phi_0(x(t_1))=u(t_2)-u(t_1)=2\int^{t_2}_{t_1}p^2(\tau)d\tau>0.
\]
This shows that $\phi_0$ is strict increasing on $[-\frac{\pi}{2},-\frac{\pi}{2}+\sigma_0]$. The symmetric property $\phi_0(x)=\phi_0(-\pi-x)$ follows from Lemma \ref{sym}. The corresponding properties for $\phi_1$ is proved by similar arguments.

\vspace{1em}
(2)\,\,Assume for $(x_0,p_0,u_-(x_0))\in\Lambda^1(u_-)$ and some $k\in\mathbb{Z}, \alpha(x_0,p_0,u_-(x_0))=(-\frac{\pi}{2}+2k\pi,0,-c)$. The corresponding calibrated orbit $(x(t),p(t),u(t)), t\in(-\infty,0]$ satisfies $x(0)=x_0$ and for $t\leq0$,
\[
u_-(x(t))=u(t)=\phi_0(x(t)-2k\pi).
\]
We assume $x_0>-\frac{\pi}{2}+2k\pi$, then $x(t)$ is strictly increasing in $t, \lim_{t\rightarrow-\infty}x(t)=-\frac{\pi}{2}+2k\pi$ and
\begin{equation}\label{eq:8}
u_-(x)=\phi_0(x-2k\pi),\quad x\in[-\frac{\pi}{2}+2k\pi,x_0]
\end{equation}
By definition of $\sigma_0$, it is clear that $0<x_0-(-\frac{\pi}{2}+2k\pi)\leq\sigma_0$. Since $\phi_0$ is strictly increasing, if $x_0-(-\frac{\pi}{2}+2k\pi)\geq2\pi$, then $x_0\geq-\frac{\pi}{2}+(2k+2)\pi>-\frac{\pi}{2}+2k\pi$ and by \eqref{eq:8},
\[
u_-(-\frac{\pi}{2}+(2k+2)\pi)=\phi_0(-\frac{\pi}{2}+2\pi)>\phi_0(-\frac{\pi}{2})=u_-(-\frac{\pi}{2}+2k\pi).
\]
This is a contradiction since $u_-$ is $2\pi$-periodic. The conclusions for $\phi_1$ is proved by similar arguments.
\end{proof}

\vspace{1em}
Denoting by $\mathbf{c}$ the constant function taking value $c>0$ everywhere, we notice that $\mathbf{c}$ is a subsolution to \eqref{i-e}. By Proposition \ref{mono} and Lemma \ref{back-bd}, we define
\begin{equation}\label{positive}
u_1:=\lim_{t\to +\infty} T_t^-\mathbf{c}\in\mathcal{S}^c_-\quad\text{and}\quad u_1(x)\geqslant c>0,\quad\text{for all }x\in\mathbb{R},
\end{equation}
thus \eqref{i-e} admits at least one positive function. Furthermore, we have
\begin{lemma}\label{lem:existpositive}
$\sigma_1\geq\pi$ and if we adopt the convention that for $x$ not belonging to the domain of $\phi_i(\cdot-2k\pi), i=0,1$, $\phi_{i}(x-2k\pi)$ is neglected in taking minimum, then
\begin{equation}\label{sol-1}
u_1(x)=\min_{k\in\mathbb{Z}}\{\phi_{1}(x-2k\pi)\}
\end{equation}
is the \textbf{unique} positive solution to \eqref{i-e}.
\end{lemma}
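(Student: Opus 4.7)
Let $u_-$ denote any positive solution of \eqref{i-e}; I apply the argument both to $u_1$ (to read off the explicit formula) and to an arbitrary positive solution (to get uniqueness). The idea is to follow the calibrated backward orbits of $u_-$, exclude half of the hyperbolic fixed points by positivity, and then pin $u_-$ down as the pointwise minimum of the translates $\phi_1(\cdot - 2k\pi)$ via the viscosity supersolution property.

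First I invoke the smoothness and strict convexity of $H^c$ in $p$ to get that $u_-$ is semiconcave, hence admits one-sided derivatives at every point. For each $x\in\R$, the calibration construction opening Subsection 3.2 yields a backward orbit $(x(t),p(t),u(t))$ of \eqref{eq:i-e} with $x(0)=x$ and $u(t)=u_-(x(t))$; by Lemma \ref{snw} its $\alpha$-limit is a single hyperbolic fixed point whose $u$-coordinate equals $\pm c$. The value $-c$ is excluded because $u_-(x(t))\to -c<0$ would contradict $u_->0$, leaving only the fixed points $(\tfrac{\pi}{2}+2k\pi,0,c)$, $k\in\mathbb{Z}$.

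Next, Proposition \ref{unstable-prop}(2) forces $|x-(\tfrac{\pi}{2}+2k\pi)|\leqslant\sigma_1$ and identifies $u_-$ with $\phi_1(\cdot-2k\pi)$ on the closed interval joining $x$ to $\tfrac{\pi}{2}+2k\pi$. Since $x$ ranges over all of $\R$, the union $\bigcup_{k}[\tfrac{\pi}{2}+2k\pi-\sigma_1,\tfrac{\pi}{2}+2k\pi+\sigma_1]$ must cover $\R$; as consecutive centers are spaced $2\pi$ apart, this immediately gives $\sigma_1\geqslant \pi$.

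For the formula and uniqueness, I introduce the closed sets $E_k:=\{x\in\R : u_-(x)=\phi_1(x-2k\pi)\}$, which together cover $\R$. At a transition point $x_0\in\partial E_{k_0}\cap\partial E_{k_1}$ with $k_0\neq k_1$, continuity and the energy relation $(\phi_1')^2+\sin(x)\phi_1=c$ along each branch force the two one-sided derivatives of $u_-$ at $x_0$ to be $\pm a$ with $a^2=c-\sin(x_0)u_-(x_0)\geqslant 0$. The main obstacle is to rule out the \emph{upward-kink} configuration (left-slope $-a$, right-slope $+a$ with $a>0$): in that case $0\in D^-u_-(x_0)=[-a,+a]$, so the viscosity supersolution inequality evaluated at $p=0$ would give $\sin(x_0)u_-(x_0)\geqslant c$, contradicting $\sin(x_0)u_-(x_0)=c-a^2<c$. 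Hence only downward kinks occur, so $u_-$ locally selects the smaller branch at every transition, and iterating yields $u_-(x)=\min_{k\in\mathbb{Z}}\phi_1(x-2k\pi)$---simultaneously supplying the explicit formula for $u_1$ and ruling out any second positive solution.
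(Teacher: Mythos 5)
Your first two steps coincide with the paper's: you classify the $\alpha$-limits of calibrated orbits via Lemma \ref{snw}, discard the fixed points at height $-c$ by positivity, and deduce $\sigma_1\geqslant\pi$ from the fact that the projected unstable manifolds of the points $(\tfrac{\pi}{2}+2k\pi,0,c)$ must cover the graph of $u_-$. That part is fine.

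The final step, however, has a genuine gap. Ruling out upward kinks by the supersolution inequality is correct as far as it goes, but ``$u_-$ locally selects the smaller branch at every transition'' does not yield $u_-=\min_k\phi_1(\cdot-2k\pi)$ by ``iterating'': a function could follow a branch across a crossing point \emph{without} transitioning there (no kink occurs, so your criterion is silent), and thereafter ride the \emph{larger} branch until the next transition. To close the argument you must control \emph{where} transitions can occur and \emph{how many} there are, and for this you need exactly the two inputs you never use. First, Proposition \ref{unstable-prop}(2) gives more than the pointwise identity defining your $E_k$: it says $u_-$ agrees with $\phi_1(\cdot-2k\pi)$ on the entire interval joining $x$ to the center $\tfrac{\pi}{2}+2k\pi$, so on $[\tfrac{\pi}{2},\tfrac{5\pi}{2}]$ only the branches $k=0,1$ appear and there is a single transition point $x_\ast$ (your $E_k$, defined merely as equality sets, could a priori interlace in a complicated way). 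Second, by continuity $\phi_1(x_\ast)=\phi_1(x_\ast-2\pi)=\phi_1(3\pi-x_\ast)$, and the symmetry $\phi_1(x)=\phi_1(\pi-x)$ together with the \emph{strict} monotonicity of $\phi_1$ on $[\tfrac{\pi}{2},\tfrac{\pi}{2}+\sigma_1]$ (Proposition \ref{unstable-prop}(1)) forces $x_\ast=\tfrac{3\pi}{2}$, which is precisely the unique point where adjacent branches meet; the min formula then follows. This is the paper's route, and note that once the transition point is pinned down by symmetry and monotonicity, the kink-direction argument becomes unnecessary. Your viscosity-supersolution observation is a valid and reusable idea, but here it substitutes for the wrong missing piece.
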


\begin{proof}
Assume $u_-\in\mathcal{S}^c_-$ is positive everywhere. Then the $\alpha$-limit set of any point on $\Lambda^1(u_-)$ is, for some $k\in\mathbb{Z}, \left(\frac{\pi}{2}+2k\pi,0,c \right)$. It follows from Proposition \ref{unstable-prop} and the periodicity of $u_-$ that
\[
\alpha\left(\Lambda^1(u_-)\right)=\bigcup_{k\in\mathbb{Z}}\left(\frac{\pi}{2}+2k\pi,0,c\right)\quad\text{and}\quad \Lambda^0(u_-)\subset\bigcup_{k\in\mathbb{Z}}\mathrm{W}^u \left(\frac{\pi}{2}+2k\pi,0,c\right).
\]
This shows that $\bigcup_{k\in\mathbb{Z}}\mathrm{W}^u \left(\frac{\pi}{2}+2k\pi,0,c\right)$ is connected, therefore $\sigma_1\geqslant\pi$.

\vspace{1em}
Now it suffices to verify \eqref{sol-1} on $[\frac{\pi}{2},\frac{5\pi}{2}]$ with $u_1$ replaced by an arbitrary positive solution $u_-$, since both sides are $2\pi$-periodic functions. By Proposition \ref{unstable-prop} (2), for $x\in[\frac{\pi}{2},\frac{5\pi}{2}]$, there are $p\in D^{\ast}u_-(x), k\in\mathbb{Z}$ such that
\[
\alpha(x,p,u_-(x))=(\frac{\pi}{2}+2k\pi,0,c),\quad k=0,1.
\]
and there is $x_{\ast}\in(\frac{\pi}{2},\frac{5\pi}{2})$ such that
\[
u_-(x)=
\begin{cases}
\phi_1(x),\quad &x\in[\frac{\pi}{2},x_{\ast}],\\
\phi_1(x-2\pi),\quad &x\in[x_{\ast},\frac{5\pi}{2}].
\end{cases}
\]
Thus we obtain
\[
\phi_1(x_\ast)=u_-(x_\ast)=\phi_1(x_\ast-2\pi)=\phi_1(3\pi-x_\ast),
\]
where the last equality uses Proposition \ref{unstable-prop} (1). Since $x_\ast,3\pi-x_\ast\in[\frac{\pi}{2},\frac{\pi}{2}+\sigma_1]$, Proposition \ref{unstable-prop} (1) again (precisely, monotonicity of $\phi_1$) implies $x_\ast=\frac{3\pi}{2}$, and this completes the proof.
\end{proof}

\vspace{1em}
By Theorem \ref{multi}, there are at least two different solutions of equation \eqref{i-e}. Due to Lemma \ref{lem:existpositive}, there exists at least a solution which is not everywhere positive. Lemma \ref{lem:existnotpositive} proves the uniqueness of such solutions. Before that, we need

\begin{lemma}\label{lem:np-structure}
If $u_-\in\mathcal{S}^c_{-}$ is not everywhere positive, then for any $k\in\mathbb{Z}$, $u_-$ is identified with $\phi_0(\cdot-2k\pi)$ near $x=-\frac{\pi}{2}+2k\pi$.
\end{lemma}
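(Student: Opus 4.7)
The plan is to first pin down the value of $u_-$ at each point $-\frac{\pi}{2}+2k\pi$ (which must be $-c$), then use Proposition \ref{unstable-prop} to identify the $\alpha$-limit of every calibrated orbit of $u_-$ in a small neighborhood of such a point, and finally read off $u_-=\phi_0(\cdot-2k\pi)$ locally from Proposition \ref{unstable-prop}(2). By the $2\pi$-periodicity of $u_-$ and of the equation it suffices to treat $k=0$.

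First I would locate the minima of $u_-$. Let $x_\ast$ achieve $\min u_-$. Local semiconcavity of $u_-$ makes it differentiable at $x_\ast$ with $u_-'(x_\ast)=0$, so $0\in D^- u_-(x_\ast)$, and the supersolution condition gives $\sin(x_\ast)u_-(x_\ast)\geq c>0$. Since $u_-$ is not everywhere positive, $u_-(x_\ast)\leq 0$; combined with $c>0$ this forces $u_-(x_\ast)<0$, $\sin(x_\ast)<0$, and hence $u_-(x_\ast)\leq -c$. For the reverse inequality I would take a calibrated orbit $(x(t),p(t),u(t))_{t\in(-\infty,0]}$ based at $(x_\ast,0,u_-(x_\ast))$; its $\alpha$-limit is one of the hyperbolic fixed points of Lemma \ref{snw}, and since $u(t)$ is nondecreasing with $u(0)\leq -c<c$, the value-$c$ fixed points are excluded. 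Thus $u_-(x_\ast)=u(0)\geq u(-\infty)=-c$, which forces $u_-(x_\ast)=-c$, then $\sin(x_\ast)=-1$, and finally $x_\ast\in\{-\frac{\pi}{2}+2k\pi:k\in\mathbb{Z}\}$; in particular $u_-(-\frac{\pi}{2})=-c$.

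Next, for each $x$ in a small neighborhood of $-\frac{\pi}{2}$ with $x\neq -\frac{\pi}{2}$, every calibrated orbit of $u_-$ ending at $(x,p,u_-(x))$ for some $p\in D^\ast u_-(x)$ satisfies $u_-(x)<c$ by continuity, so the same argument places its $\alpha$-limit at $(-\frac{\pi}{2}+2m\pi,0,-c)$ for some $m\in\mathbb{Z}$; the crux is to show $m=0$. Suppose otherwise and extract a sequence $x_n\to -\frac{\pi}{2}$ whose calibrated orbits share a common $m\neq 0$. Proposition \ref{unstable-prop}(2) supplies both the identity $u_-(x_n)=\phi_0(x_n-2m\pi)$ and the bound $\min(\sigma_0,2\pi)\geq |x_n-(-\frac{\pi}{2}+2m\pi)|$. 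If $\sigma_0<2\pi$, this bound fails for large $n$ because its right-hand side tends to $2\pi|m|\geq 2\pi>\sigma_0$; if $\sigma_0\geq 2\pi$, passing to the limit in the identity gives $-c=u_-(-\frac{\pi}{2})=\phi_0(-\frac{\pi}{2}-2m\pi)$, contradicting the strict monotonicity clause of Proposition \ref{unstable-prop}(1), which pins $-\frac{\pi}{2}$ as the unique minimizer of $\phi_0$. Either way $m=0$, and Proposition \ref{unstable-prop}(2) then yields $u_-\equiv\phi_0$ on the closed interval joining $-\frac{\pi}{2}$ to $x$; letting $x$ range on both sides of $-\frac{\pi}{2}$ gives the identification on a full neighborhood, and periodicity extends it to every $k$.

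The main obstacle is the step $m=0$ in the third paragraph: a priori Proposition \ref{unstable-prop}(2) alone does not rule out nearby calibrated orbits originating at distant fixed points $(-\frac{\pi}{2}+2m\pi,0,-c)$, and the case split on the size of $\sigma_0$, together with the pinning $u_-(-\frac{\pi}{2})=-c$ from the second paragraph and the unique-minimum property of $\phi_0$, are exactly what close the argument.
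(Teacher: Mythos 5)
Your proof is correct, and while it follows the same skeleton as the paper's argument (first pin down $u_-(-\frac{\pi}{2}+2k\pi)=-c$, then localize the $\alpha$-limit of calibrated orbits near that point to the fixed point $(-\frac{\pi}{2}+2k\pi,0,-c)$, then invoke Proposition \ref{unstable-prop}(2)), both key steps are implemented differently. For the value $-c$, the paper follows a calibrated orbit emanating from a point where $u_-\leqslant 0$ back to its $\alpha$-limit; you instead test the supersolution inequality with $0\in D^-u_-(x_\ast)$ at a global minimizer, which pins down the minimum value and its location simultaneously (and in fact you do not even need semiconcavity there, since $0\in D^-u_-$ at any minimum). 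For the crucial localization $m=k$, the paper confines the backward calibrated orbit to the interval $I_k=(-\frac{\pi}{2}+2k\pi-\delta,-\frac{\pi}{2}+2k\pi+\delta)$ on which $u_-<0$, using the monotonicity of the $u$-component along orbits together with the fact that $I_k$ contains a unique fixed point; you instead combine the quantitative bound $\min\{\sigma_0,2\pi\}\geqslant|x-(-\frac{\pi}{2}+2m\pi)|$ from Proposition \ref{unstable-prop}(2) with a limiting argument along $x_n\to-\frac{\pi}{2}$ and the fact that $\phi_0$ attains its minimum $-c$ only at $-\frac{\pi}{2}$. Both are valid; yours trades the paper's direct dynamical confinement for a compactness argument leaning on the quantitative content of Proposition \ref{unstable-prop}. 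One small point to make explicit: extracting a \emph{common} $m\neq 0$ from the sequence $x_n$ deserves a word, but Proposition \ref{unstable-prop}(2) itself gives $2\pi|m_n|\leqslant 2\pi+|x_n+\frac{\pi}{2}|$, hence $|m_n|=1$ for all large $n$, so the extraction is harmless (and in your case $\sigma_0<2\pi$ no extraction is needed at all, since the bound already fails for any nonzero $m_n$).
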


\begin{proof}
Assume there is $(x_0,p_0,u_0)\in\Lambda^1(u_-)$ such that $x_0\in[-\frac{\pi}{2}, \frac{3\pi}{2}]$ and $u_0=u_-(x_0)\leq0$. By the equation \eqref{eq:i-e}, we have the $u$-component of $\alpha(x_0,p_0,u_0)$ is not larger than $u_-(x_0)$. Combining Proposition \ref{unstable-prop} (2), $\alpha(x_0,p_0,u_0)=(-\frac{\pi}{2},0,-c)$ or $(\frac{3\pi}{2},0,-c)$. By continuity and periodicity of $u_-, u_-(-\frac{\pi}{2}+2k\pi)=-c$ and there is $0<\delta\leq\min\{\sigma_0,\frac{\pi}{4}\}$ such that $u_-(x)<0$ for $|x+\frac{\pi}{2}-2k\pi|<\delta$. Notice that for different $k, I_k:=(-\frac{\pi}{2}+2k\pi-\delta,-\frac{\pi}{2}+2k\pi+\delta)$ are mutually disjoint and the fixed point with $x$-component falling into $I_k$ is unique, namely $(-\frac{\pi}{2}+2k\pi,0,-c)$. By the fact that the $u$-component of a calibrated orbit $(x(t),p(t),u(t))$, other than fixed points, is strictly increasing, we conclude that if $x(0)\in I_k$, then for $t\leq0$,
\[
x(t)\in I_k\quad\text{and}\quad\alpha(x(0),p(0),u(0))=(-\frac{\pi}{2}+2k\pi,0,-c).
\]
By Proposition \ref{unstable-prop} (2), this implies that $u_-(x)=\phi_0(x-2k\pi)$ for $x\in I_k$.
\end{proof}

\begin{lemma}\label{lem:existnotpositive}
If $u_-\in\mathcal{S}^c_{-}$ is not everywhere positive, then with the same convention as in Lemma \ref{lem:existpositive},
\begin{equation}\label{np:rep}
u_-(x)=u_0(x):=\min_{k\in \mathbb{Z}}\{\phi_0(x-2k \pi), \phi_1(x-2k\pi)\}.
\end{equation}
Notice that since $\sigma_1\geq\pi$, the above minimum is less than $\max_{x\in[-\frac{\pi}{2},\frac{3\pi}{2}]}\phi_1(x)$ and is always attained.
\end{lemma}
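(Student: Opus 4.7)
The plan is to combine the local structure near the negative-energy fixed points given by Lemma \ref{lem:np-structure} with the calibrated-orbit description of Lemma \ref{snw} and Proposition \ref{unstable-prop}, and then use the viscosity supersolution condition to rule out every ``convex corner'' of $u_-$. By $2\pi$-periodicity it suffices to identify $u_-$ with $u_0$ on a single period, which I would take to be $[-\frac{\pi}{2},\frac{3\pi}{2}]$.

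First I invoke Lemma \ref{lem:np-structure} to get $u_-(-\frac{\pi}{2})=u_-(\frac{3\pi}{2})=-c$, with $u_-\equiv\phi_0$ (resp.\ $\phi_0(\cdot-2\pi)$) on an open neighborhood of $-\frac{\pi}{2}$ (resp.\ $\frac{3\pi}{2}$). For a general $x_0\in[-\frac{\pi}{2},\frac{3\pi}{2}]$ and any $p_0\in D^\ast u_-(x_0)$, the backward calibrated orbit through $(x_0,p_0,u_-(x_0))$ has its $\alpha$-limit at one of the hyperbolic fixed points $(\pm\frac{\pi}{2}+2k\pi,0,\pm c)$ by Lemma \ref{snw}; Proposition \ref{unstable-prop}~(2) then forces
\[
u_-(x_0)\in\bigl\{\phi_0(x_0-2k\pi),\ \phi_1(x_0-2k\pi):k\in\mathbb{Z}\bigr\},
\]
where only finitely many terms are admissible at each $x_0$ because of the $\min\{\sigma_i,2\pi\}$ restriction in Proposition \ref{unstable-prop}~(2). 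This immediately gives $u_-(x_0)\geqslant u_0(x_0)$.

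For the reverse inequality, I would decompose $[-\frac{\pi}{2},\frac{3\pi}{2}]$ into maximal closed subintervals on each of which $u_-$ coincides with one of $\phi_0$, $\phi_1$, $\phi_0(\cdot-2\pi)$ (and possibly $\phi_1(\cdot\pm 2\pi)$ when $\sigma_1>\pi$). Continuity of $u_-$ forces any two adjacent subintervals to meet at a transition point $x_\ast$ where the two corresponding $\phi$-graphs cross. The crux of the proof, and the step I expect to be the main obstacle, is to show that $u_-$ selects the \emph{smaller} of the two crossing branches near $x_\ast$---equivalently, that $x_\ast$ is a ``concave corner'' (local peak) of $u_-$ rather than a ``convex corner'' (valley). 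Suppose for contradiction $x_\ast$ is a valley: then the subdifferential $D^-u_-(x_\ast)$ contains the non-degenerate interval whose endpoints are the two distinct slopes of the touching $\phi$-graphs, and in particular contains $0$. Testing the viscosity supersolution inequality $|p|^2+\sin(x_\ast)u_-(x_\ast)\geqslant c$ at $p=0$ yields $\sin(x_\ast)u_-(x_\ast)\geqslant c$. However, since each $\phi_i(\cdot-2k\pi)$ is a classical solution of \eqref{i-e} on its domain and equals $u_-(x_\ast)$ at the crossing, we have $\sin(x_\ast)u_-(x_\ast)=c-(\phi_i^{\prime}(x_\ast-2k\pi))^2<c$ strictly, because the strict monotonicity established in Proposition \ref{unstable-prop}~(1) implies that $\phi_0^{\prime}$ and $\phi_1^{\prime}$ vanish only at $-\frac{\pi}{2}$ and $\frac{\pi}{2}$ respectively, while $u_-(x_\ast)$ cannot take the values $\pm c$ at such a crossing. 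Contradiction.

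Hence no valleys occur, so at every transition $u_-$ moves onto the smaller branch; combined with the local identification near $\pm\frac{\pi}{2}+2k\pi$, this shows $u_-\equiv u_0$ on $[-\frac{\pi}{2},\frac{3\pi}{2}]$, and by periodicity on all of $\mathbb{R}$. The inequality $\sigma_1\geqslant\pi$ from Lemma \ref{lem:existpositive} guarantees that the defining minimum for $u_0$ is always attained and bounded above by $\max_{x\in[-\frac{\pi}{2},\frac{3\pi}{2}]}\phi_1(x)$, so the formula makes sense at every $x\in\mathbb{R}$.
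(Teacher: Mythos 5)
Your overall architecture (reduce to one period, use Lemma \ref{lem:np-structure} to anchor $u_-$ at the fixed points $-\frac{\pi}{2}+2k\pi$, and decompose $u_-$ into branches of projected unstable manifolds via calibrated orbits) matches the paper's, and your ``no convex corner'' test is a legitimate alternative to the paper's mechanism at the crossings: the paper instead uses the symmetries $\phi_0(x)=\phi_0(-\pi-x)$, $\phi_1(x)=\phi_1(\pi-x)$ together with strict monotonicity to pin the transition points down to $a$ and $b=\pi-a$ (or $a=b=\frac{\pi}{2}$). Your valley argument is essentially sound once you add the missing observation that at a genuine corner the two branch slopes must be \emph{opposite}: both branches are classical solutions of \eqref{i-e}, so their derivatives at $x_\ast$ have the same square $c-\sin(x_\ast)u_-(x_\ast)$; distinct slopes are therefore $\pm p$ with $p\neq 0$, which is what puts $0$ into $D^-u_-(x_\ast)$ and makes the supersolution test at $p=0$ give the contradiction $c-\sin(x_\ast)u_-(x_\ast)\leqslant 0$.

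The genuine gap is the final implication ``no valleys $\Rightarrow$ $u_-\equiv u_0$.'' Absence of convex corners only tells you that \emph{at each transition} $u_-$ switches onto the locally smaller branch; it does not exclude the scenario in which some branch $C$ crosses strictly below the branch that $u_-$ is currently following, while $u_-$ simply continues on its own branch without ever selecting $C$. In that scenario $u_-$ is still a concatenation of branches with only concave corners and still satisfies $u_-\geqslant u_0$, yet $u_->u_0$ on an open set, so your two ingredients do not combine to give $u_-\leqslant u_0$. This is not a hypothetical worry: it is exactly the issue the paper must confront in its case $a=b$, where $u_-=\min_k\phi_0(\cdot-2k\pi)$ has no transition onto any $\phi_1$-branch, and the identity $u_-=u_0$ is rescued only by the separate estimate $u_-(x)\leqslant u_-(\frac{\pi}{2})\leqslant p^2+\sin(\frac{\pi}{2})u_-(\frac{\pi}{2})=c\leqslant\min_k\phi_1(x-2k\pi)$, showing that the $\phi_1$-branches never dip below $u_-$ at all. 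To close your argument you need either this kind of pointwise comparison for every branch not selected by $u_-$, or the paper's route: use periodicity of $u_-$ plus the symmetry of $\phi_0,\phi_1$ to show the transition points are forced to be symmetric about $\frac{\pi}{2}$ (hence coincide with the crossing points of the minimal selection), from which $u_-=u_0$ follows by the monotonicity of $\phi_1-\phi_0$ and $\phi_1-\phi_0(\cdot-2\pi)$.
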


\begin{proof}
Since both sides are $2\pi$-periodic functions, it is sufficient to verify \eqref{np:rep} on $[-\frac{\pi}{2},\frac{3\pi}{2}]$. Proposition \ref{unstable-prop} (2) and Lemma \ref{lem:np-structure} shows that: for any $x\in[-\frac{\pi}{2},\frac{3\pi}{2}]$ and $p\in D^\ast u_-(x)$, the $x$-component of $\alpha(x,p,u_-(x))$ must belong to $[-\frac{\pi}{2},\frac{3\pi}{2}]$; combining the fact that $\phi_1-\phi_0$ is strictly decreasing on $[-\frac{\pi}{2},-\frac{\pi}{2}+\sigma_0]$ and $\phi_1-\phi_0(\cdot-2\pi)$ is strictly increasing on $[\frac{3\pi}{2}-\sigma_0,\frac{3\pi}{2}]$, there are $-\frac{\pi}{2}<a\leqslant b<\frac{3\pi}{2}$ such that
\begin{equation}\label{sol-2}
u_-(x)=
\begin{cases}
\phi_0(x),\quad &x\in[-\frac{\pi}{2},a],\\
\phi_1(x),\quad &x\in(a,b),\\
\phi_0(x-2\pi),\quad &x\in[b,\frac{3\pi}{2}].
\end{cases}
\end{equation}
The above formula implies $a\leq-\frac{\pi}{2}+\sigma_0, b\geq\frac{3\pi}{2}-\sigma_0$ and if $a<b$, then
\begin{equation}\label{eq:9}
\phi_0(a)=\phi_1(a), \phi_0(b-2\pi)=\phi_1(b).
\end{equation}
Now there are two cases:

\begin{figure}[h]
\begin{center}
\includegraphics[width=18cm]{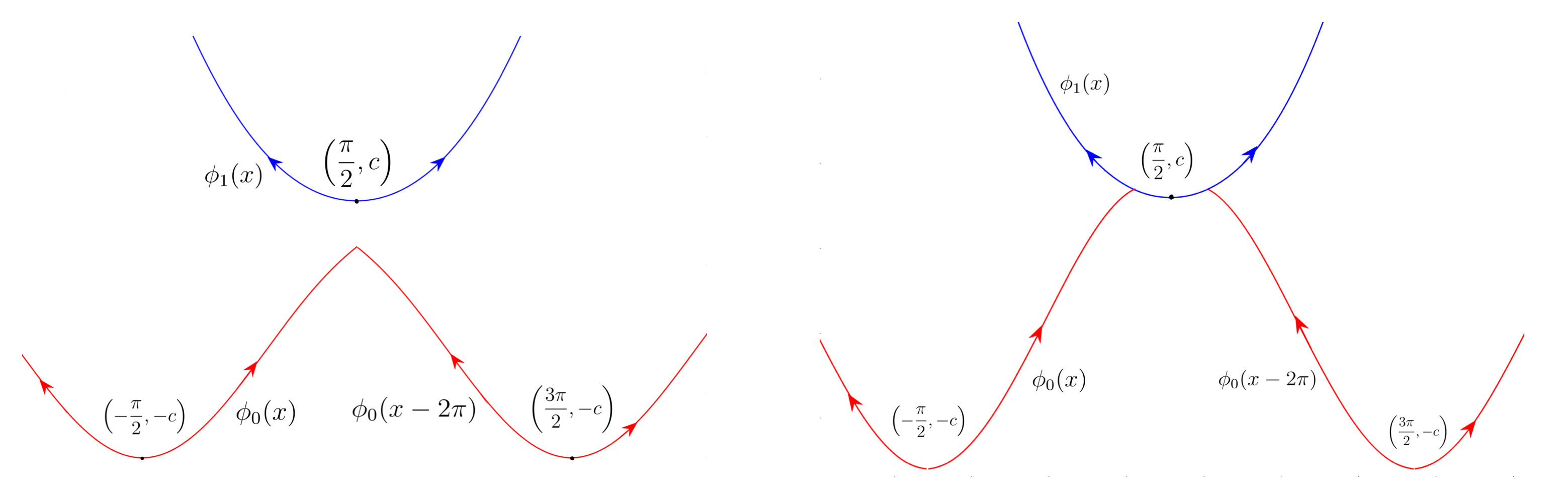}
\caption{\hspace{0.4cm} $a=b$ \hspace{7cm} $a<b$ \hspace{2cm}}
\label{fig1}
\end{center}
\end{figure}

\vspace{1em}
(i)\,\,$a=b$. In this case,
\[
u_-(x)=
\begin{cases}
\phi_0(x),\quad &x\in[-\frac{\pi}{2},a]\\
\phi_0(x-2\pi),\quad &x\in[a,\hspace{0.18cm}\frac{3\pi}{2}]
\end{cases}
\]
and $\frac{3\pi}{2}-\sigma_0\leqslant a\leq-\frac{\pi}{2}+\sigma_0$. So we obtain from continuity of $u_-$ that
\[
\phi_0(a)=u_-(a)=\phi_0(a-2\pi)=\phi_0(\pi-a),
\]
where the last equality uses Proposition \ref{unstable-prop} (1). Since $a,\pi-a\in[-\frac{\pi}{2},-\frac{\pi}{2}+\sigma_0]$, Proposition \ref{unstable-prop} (1) again implies $a=\pi-a=\frac{\pi}{2}, \sigma_0\geq\pi$ and $u_-(x)=\min_{k\in\mathbb{Z}}\{\phi_{0}(x-2k\pi)\}$. Observe that for $p\in D^*u_-( \frac{\pi}{2})$,
\[
\min_{k\in\mathbb{Z}}\{\phi_{0}(x-2k\pi)\}=u_-(x)\leqslant u_-\bigg(\frac{\pi}{2}\bigg)\leqslant p^2+\sin\bigg(\frac{\pi}{2}\bigg)u_-\bigg(\frac{\pi}{2}\bigg)=c\leqslant\min_{k\in\mathbb{Z}}\phi_1(x-2k\pi).
\]
Then $u_-(x)=\min_{k\in\mathbb{Z}}\{\phi_{0}(x-2k\pi)\}=\min_{k\in\mathbb{Z}}\{\phi_{0}(x-2k\pi),\phi_1(x-2k\pi)\}$.

\medskip
(ii)\,\,$a<b$. In this case, $a$ is the \textbf{unique} zero of $\phi_1-\phi_0$ on $[-\frac{\pi}{2},-\frac{\pi}{2}+\sigma_0], b$ is the \textbf{unique} zero of $\phi_1-\phi_0(\cdot-2\pi)$ on $[\frac{3\pi}{2}-\sigma_0,\frac{3\pi}{2}]$. Using Proposition \ref{unstable-prop} (1) and \eqref{eq:9},
\[
0=\phi_1(a)-\phi_0(a)=\phi_1(\pi-a)-\phi_0(-\pi-a)=\phi_1(\pi-a)-\phi_0((\pi-a)-2\pi).
\]
Thus $\pi-a$ is a zero of $\phi_1-\phi_0(\cdot-2\pi)$ on $[\frac{3\pi}{2}-\sigma_0,\frac{3\pi}{2}]$, thus equals $b$. So we obtain
\begin{align}
u_-(x)=
\begin{cases}
\phi_0(x),\quad &x\in[-\frac{\pi}{2},a],\\
\phi_1(x),\quad &x\in(a,\pi-a),\\
\phi_0(x-2\pi),\quad &x\in[\pi-a,\frac{3\pi}{2}].
\end{cases}
\end{align}
By Proposition \ref{unstable-prop} (1), $\phi_0-\phi_1$ is strictly increasing on $x\in [-\frac{\pi}{2},-\frac{\pi}{2}+\sigma_0]$, so for $x\in[-\frac{\pi}{2},\frac{\pi}{2}]$,
\[
u_-(x)=\min_{k\in\mathbb{Z}}\{\phi_{0}(x-2k\pi),\phi_1(x-2k\pi)\}.
\]
The symmetry of $\phi_i$ shows the above identity holds for $x\in[\frac{\pi}{2},\frac{3\pi}{2}]$.
\end{proof}

Finally, we combine Lemma \ref{lem:existpositive} and Lemma \ref{lem:existnotpositive} to obtain
\begin{theorem}\label{thm:existtwosolution}
The equation \eqref{i-e} admit exactly two solutions, i.e.
\[
\mathcal{S}^c_-=\{u_0,u_1 \}
\]
where $u_0=\min_{k\in \mathbb{Z}}\{\phi_0(x-2k\pi),\phi_1(x-2k\pi)\}$ and $u_1=\min_{k\in \mathbb{Z}}\{\phi_1(x-2k\pi)\} $.
\end{theorem}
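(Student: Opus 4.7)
The plan is to assemble the two preceding classification lemmas with the general multiplicity bound Theorem \ref{multi} into a clean count of $\mathcal{S}^c_-$. Since $c>c(H)=0$ by Proposition \ref{cri}, Theorem \ref{multi} supplies the lower bound $|\mathcal{S}^c_-|\geqslant 2$. The key structural observation is the tautological dichotomy that any $u_-\in\mathcal{S}^c_-$ is either everywhere positive on $\mathbb{R}$ or has at least one point where it is nonpositive.

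First I would derive the matching upper bound $|\mathcal{S}^c_-|\leqslant 2$ by running through the dichotomy: in the first case Lemma \ref{lem:existpositive} pins down $u_-=u_1$, and in the second case Lemma \ref{lem:existnotpositive} pins down $u_-=u_0$. Together these give $\mathcal{S}^c_-\subseteq\{u_0,u_1\}$, so at most two solutions can exist.

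Next I would verify that both candidates are realised. Membership $u_1\in\mathcal{S}^c_-$ is already recorded in \eqref{positive}, which constructs $u_1$ as the uniform limit of $T_t^-\mathbf{c}$ via the subsolution $\mathbf{c}$ and Lemma \ref{back-bd}. To pick up $u_0$, I would simply note that the inclusion $\{u_1\}\subseteq\mathcal{S}^c_-$ together with the bound $|\mathcal{S}^c_-|\geqslant 2$ forces at least one additional element, and by the upper bound just established this element must coincide with $u_0$. The only remaining sanity check---that $\{u_0,u_1\}$ genuinely has cardinality two---is immediate from $u_0(-\tfrac{\pi}{2})=\phi_0(-\tfrac{\pi}{2})=-c<0$, whereas $u_1\geqslant c>0$ everywhere by \eqref{positive}. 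There is no real obstacle in this step; all the substantive dynamical work has already been done in the analysis of the unstable manifolds in Proposition \ref{unstable-prop} and its two corollary Lemmas \ref{lem:existpositive} and \ref{lem:existnotpositive}, so the theorem is essentially a bookkeeping consequence.
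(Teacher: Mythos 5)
Your proposal is correct and follows essentially the same route as the paper: the positive/not-everywhere-positive dichotomy resolved by Lemmas \ref{lem:existpositive} and \ref{lem:existnotpositive} gives $\mathcal{S}^c_-\subseteq\{u_0,u_1\}$, while \eqref{positive} and the lower bound from Theorem \ref{multi} force both candidates to be realised. The distinctness check via $u_0(-\tfrac{\pi}{2})=-c<0<c\leqslant u_1$ is a harmless addition the paper leaves implicit.
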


\begin{remark}
With the aid of numerical method, we depict two solutions $u_0, u_1$ to \eqref{i-e} below in Figure \ref{fig1} with different righthand constants. Notice that when $c=1$, the smooth solution $u_0(x)=\sin(x)$ corresponds to heteroclinic orbits between fixed points. In all cases, the numerical results fit well into our theoretical analysis, especially, Lemma \ref{lem:existpositive} and Lemma \ref{lem:existnotpositive}.
\end{remark}

\begin{figure}[h]
\begin{center}
\includegraphics[width=17cm]{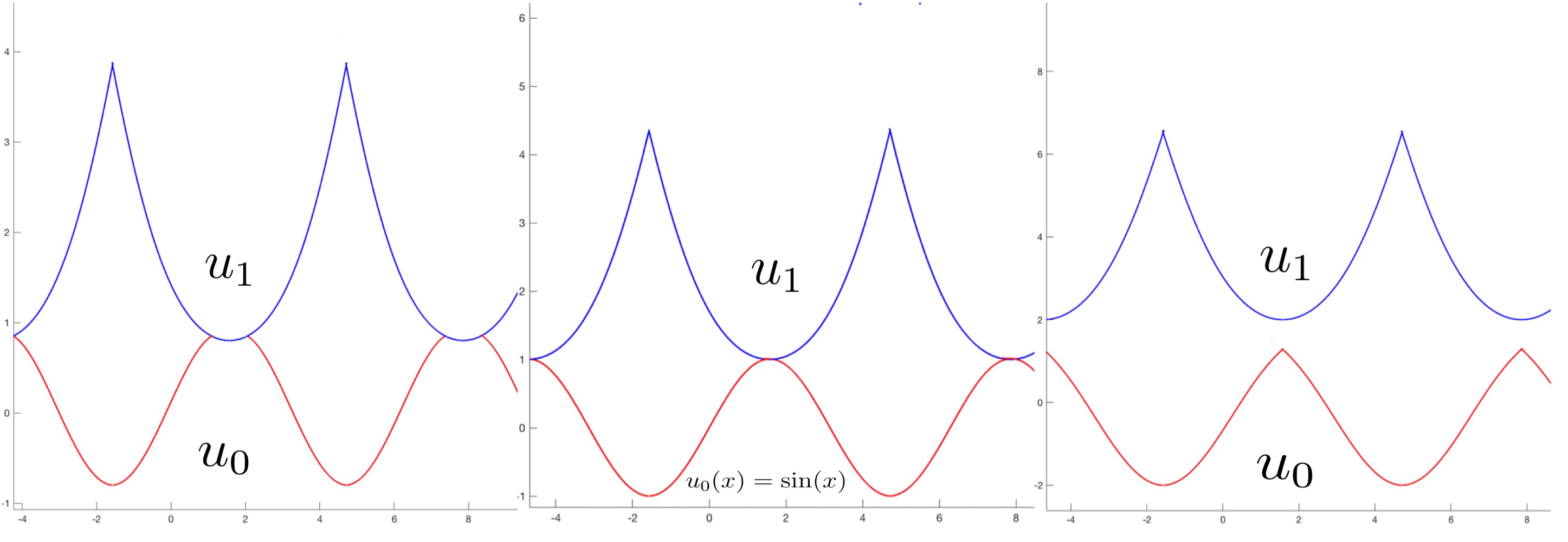}
\caption{ $ \quad c=0.8 \hspace{4cm}  c=1 \hspace{4.5cm}  c=2  \hspace{1.2cm}$  }
\label{fig1}
\end{center}
\end{figure}

\section*{Acknowledgments}
The authors are grateful to the anonymous referees for their careful reading, critical comments and useful suggestions on the original version of this paper, which have helped to improve the presentation substantially. Especially, they kindly pointed out that there are infinite solutions to \eqref{i-e} when $c=0$.  The authors would like to thank Prof. Hitoshi.Ishii warmly for inspiring discussions on the example and numerical results in Section \ref{section3}, which lead them to establish Theorem \ref{cri-sol} and Theorem \ref{thm:existtwosolution}, and for his word by word correction of the manuscript from which they benefit a lot. The authors are partly supported by National Natural Science Foundation of China (Grant No. 12171096). L.Jin is also partly supported by National Natural Science Foundation of China (Grant No. 11901293,11971232).

\medskip

%\bibliographystyle{abbrv}
%\bibliography{mybib}

\end{document}